\theoremstyle{plain}
\newtheorem{thm}{Theorem}[section]
\theoremstyle{plain}
\newtheorem{lem}[thm]{Lemma}
\newtheorem{prop}[thm]{Proposition}
\newtheorem{cor}[thm]{Corollary}
\newtheorem{ex}{Example}[section]
\theoremstyle{definition}
\newtheorem{rem}{Remark}[section]
\newcommand{\rn}{\mathbb{R}^{N}}
\newcommand{\bn}{\mathbb{B}^{N}}
\newcommand{\hn}{\mathbb{H}^{N}}
\newcommand{\authorfootnotes}{\renewcommand\thefootnote{\@fnsymbol\c@footnote}}%
\numberwithin{equation}{section} \allowdisplaybreaks
\begin{document}
        \title[]{Sharp Poincar\'e-Hardy\\ and Poincar\'e-Rellich inequalities\\ on the hyperbolic space}

\date{}

\author[Elvise Berchio]{Elvise Berchio}
\address{\hbox{\parbox{5.7in}{\medskip\noindent{Dipartimento di Scienze Matematiche, \\
Politecnico di Torino,\\
        Corso Duca degli Abruzzi 24, 10129 Torino, Italy. \\[3pt]
        \em{E-mail address: }{\tt elvise.berchio@polito.it}}}}}
\author[Debdip Ganguly]{Debdip Ganguly}
\address{\hbox{\parbox{5.7in}{\medskip\noindent{Dipartimento di Scienze Matematiche, \\
Politecnico di Torino,\\
        Corso Duca degli Abruzzi 24, 10129 Torino, Italy. \\[3pt]
        \em{E-mail address: }{\tt debdip.ganguly@polito.it}}}}}
\author[Gabriele Grillo]{Gabriele Grillo}
\address{\hbox{\parbox{5.7in}{\medskip\noindent{Dipartimento di Matematica,\\
Politecnico di Milano,\\
   Piazza Leonardo da Vinci 32, 20133 Milano, Italy. \\[3pt]
        \em{E-mail addresses: }{\tt
          gabriele.grillo@polimi.it}}}}}


\keywords{Hyperbolic space, Poincar\'{e}-Hardy inequalities, Poincar\'{e}-Rellich inequalities, improved Hardy inequalities on manifolds}

\subjclass[2010]{26D10, 46E35, 31C12}

\begin{abstract} We study Hardy-type inequalities associated to the quadratic form of the shifted Laplacian $-\Delta_{\mathbb H^N}-(N-1)^2/4$ on the hyperbolic space ${\mathbb H}^N$, $(N-1)^2/4$ being, as it is well-known, the bottom of the $L^2$-spectrum of $-\Delta_{\mathbb H^N}$. We find the optimal constant in a resulting Poincar\'e-Hardy inequality, which includes a further remainder term which makes it sharp also locally: the resulting operator is in fact critical in the sense of \cite{pinch}. A related improved Hardy inequality on more general manifolds, under suitable curvature assumption and allowing for the curvature to be possibly unbounded below, is also shown. It involves an explicit, curvature dependent and typically unbounded potential, and is again optimal in a suitable sense. Furthermore, with a different approach, we prove Rellich-type inequalities associated with the shifted Laplacian, which are again sharp in suitable senses.
\end{abstract}

\maketitle

\tableofcontents

 \section{Introduction}
 The problem of existence of optimal, namely ``as large as possible'', Hardy weights dates back to \cite{A} and has been brought to a high level of sofistication, see e.g., and without any claim of completeness the papers \cite{GFT, BFT2,BT, Mitidieri1, Mitidieri2, BrezisM, Dambrosio, FT, gaz, Kombe1, MMP, Mitidieri, PT1, PT2} and references quoted therein. By a Hardy weight we mean a non zero nonnegative function $W$ such that the following inequality
 \begin{equation}\label{1}
 q(u)\ge \int_\Omega Wu^2\,{\rm d}x,\ \ \ \forall u\in C_c^\infty(\Omega),
 \end{equation}
 holds true, where $\Omega$ is a (e.g. Euclidean) domain and $q(u)=(u,Pu)$ is the quadratic form of a linear, elliptic, second order, symmetric, non-negative operator $P$ on $\Omega$.

 In several of the above mentioned papers, \it improved \rm versions of classical Hardy inequalities are dealt with, starting from the seminal papers by Brezis and Vazquez \cite{Brezis} and Brezis and Marcus \cite{BrezisM}. The recent paper by Devyver, Fraas and Pinchover (\cite{pinch}) deals with general second order subcritical elliptic operators $P$, either on domains in $\rn$ or on noncompact manifolds, and provides optimal weights in Hardy-type inequalities related to the quadratic form of $P$, in terms of properties of positive supersolutions of $Pu=0$.

 As concerns the analogue of the classical Euclidean Hardy inequality on Riemannian manifolds, G. Carron \cite{Carron} has shown that the inequality
 \begin{equation}\label{Euclidean}
 \int_M|\nabla_g u|^2\,{\rm d}v_g\ge \frac{(N-2)^2}4\int_M\frac{u^2}{\varrho(x,o)^2}\,{\rm d}v_g\ \ \ \forall u\in C_c^\infty(M)
 \end{equation}
 holds on any Cartan-Hadamard manifold (namely a manifold which is complete, simply-connected, and has everywhere non-positive sectional curvature),  $\varrho$ denoting geodesic distance, whereas $\nabla_g, {\rm d}v_g$ now indicate the Riemannian gradient and measure. Notice that the constant $(N-2)^2/4$ coincides with its optimal Euclidean counterpart. Further results are given in the recent papers \cite{Dambrosio,Kombe1, Yang}.

 On the other hand Cartan-Hadamard manifolds whose sectional curvatures are bounded above by a \it strictly negative \rm constant, are known to admit a Poincar\'e type, or $L^2$-gap, inequality, namely there exists $\Lambda>0$ such that
 \begin{equation*}
 \int_M|\nabla_g u|^2\,{\rm d}v_g\ge \Lambda\int_Mu^2\,{\rm d}v_g\ \ \ \forall u\in C_c^\infty(M).
 \end{equation*}
The most classic example one has in mind is of course the \it hyperbolic space \rm ${\mathbb H}^N$, where $\Lambda=(N-1)^2/4$. Furthermore, it is known that the $L^2$-spectrum of the Riemannian Laplacian is the half line $[\Lambda, \infty)$ and that the infimum
 \begin{equation}\label{poin}
\Lambda:=\lambda_1(\hn):= \inf_{u \in H^{1}(\hn) \setminus \{ 0\}} \frac{\int_{\hn}
|\nabla_{\hn} u|^2 \ dv_{\hn}}{\int_{\hn} |u|^2 \ dv_{\hn}}
\end{equation}
is never achieved.

Our first goal here will be to deal with \it sharp, improved \rm Hardy inequalities on the hyperbolic space, where we take the attitude that the improvement is done on the \it gap, or Poincar\'e \rm inequality \eqref{poin}, and in particular we are interested in the following:

\vspace{.3truecm}
\noindent \bf Problem 1. \it Does there exist $c>0$ such that the following Poincar\'e-Hardy inequality
\begin{equation}\label{question}
\int_{{\mathbb H}^N}|\nabla_{\hn} u|^2\,{\rm d}v_{\hn}-\frac{(N-1)^2}4\int_{{\mathbb H}^N}u^2\,{\rm d}v_{\hn}\ge c\int_{{\mathbb H}^N}\frac{u^2}{r^2}\,{\rm d}v_{\hn}\ \ \ \forall u\in C_c^\infty(\hn)
\end{equation}
holds, where $r:=\varrho(x,o)$ and $o\in{\mathbb H}^N$ is fixed? Which is the optimal value of $c$ if such a constant exists? Is the resulting inequality further improvable to yield criticality of a suitable Schr\"odinger operator? Does any improved Hardy inequality hold on more general manifolds under curvature conditions, and if yes is it sharp in a suitable sense?
\vspace{.3truecm}

\rm It is clear that, if the above problem has a positive answer, the constant $(N-1)^2/4$ in the l.h.s. of \eqref{question} is sharp by construction. It is also clear that \eqref{question} has no Euclidean counterpart, in contrast with \eqref{Euclidean}.

One should notice that Problem 1 is different from that treated in \cite{Kombe1, Kombe2, Yang}, where the optimal Hardy constant $(N-2)^2/4$ is taken as \it fixed\rm, and one looks for bounds for the constant in front of $\|u\|_{L^2}^2$, or for some different reminder terms. Such approach resembles instead more closely the kind of improvements given in the case of Euclidean bounded domains by \cite{BrezisM, Brezis}, a setting in which the value of the optimal Poincar\'e constant is in general not known.

In regard to Problem 1, we notice that a positive answer to its first question is suggested, on the one hand, by the explicit bounds for the heat kernel on ${{\mathbb H}^N}$ (see e.g. \cite{DA}) which show that the nonnegative operator $-\Delta_{\mathbb H^N}-(N-1)^2/4$ admits a Green's function (for $N\ge3$), and hence an inequality like \eqref{question}, with the weight $r^{-2}$ replaced by a suitable positive weight $W$, holds. On the other hand, the supersolution construction of \cite{pinch}, using as ingredients the known asymptotic behavior of the Green's function of the shifted Laplacian $P=-\Delta_{\mathbb H^N}-\Lambda$ and of the positive radial solution of the equation $Pu=0$, yields, after an easy calculation which is omitted here, that the decay at infinity of the corresponding optimal Hardy weight should be exactly $c\,r^{-2}$ for a suitable $c>0$. It is important to remark that this method \it does not give a sharp value for $c$ \rm since some of the quantities involved are not known explicitly with the detail needed.  A similar phenomenon occurs in the Euclidean situation when dealing directly with the shifted operator $-\Delta+1$. Of course, \it a posteriori \rm one could reformulate the supersolution construction given in Section 4 in terms of the shifted Laplacian.

In Theorem \ref{poincare} below, we shall answer in more detail this question by proving (an improvement of) the following inequality, which relies on a supersolution technique:

 \begin{equation}\label{poincareeq1}
\int_{\hn} |\nabla_{\hn} u|^2 \ dv_{\hn} - \frac{(N-1)^2}{4} \int_{\hn} u^2 \ dv_{\hn}
\geq \frac{1}{4} \int_{\hn} \frac{u^2}{r^2} \ dv_{\hn}\,
\end{equation}
for all $ u \in C^{\infty}_{c}(\hn).$ Furthermore, the constant $\frac{1}{4}$ in \eqref{poincareeq1} is sharp. In fact, we shall prove a stronger inequality, involving an additional positive remainder term, call it $w$, with a second \it optimal \rm constant, which tends to reproduce better and better the Euclidean Hardy inequality, with optimal constant, for functions with support in a Riemannian ball $B_\varepsilon(o)$, as $\varepsilon\to 0$. Notice that our result entails that the operator
\[
P_{1}:=-\Delta_{\mathbb H^N}-\frac{(N-1)^2}4-\frac1{4r^2}
\]
beside being nonnegative is also \it subcritical\rm, hence in particular it admits a positive minimal Green's function, and this is not true if the constant $1/4$ is replaced by any larger one. Furthermore, the operator
\[
P_{2}:=-\Delta_{\mathbb H^N}-\frac{(N-1)^2}4-\frac1{4r^2}-w,
\]
$w$ being the additional positive remainder term mentioned above, is critical in the sense of \cite[Definition 2.1]{pinch} hence no further positive weight may be added to the r.h.s. of the quadratic form inequality we prove, see Remark \ref{remarkoptimal}.

Clearly, when restricted to functions supported on a fixed geodesic ball, $P_2$ is no more critical and in Proposition \ref{infinity} we provide, as a sample of further generalization of the previous methods, an infinite expansion of logarithmic weights that can be added to the r.h.s., with sharp constants.

 After completing this paper, we got aware of the paper \cite{AK}, where inequality \eqref{poincareeq} is proved in $\hn$, but with a different proof. Also the optimality issues, which are our main task here, are addressed there in a different and less direct way, involving spectral properties of Schr\"odinger operators, and not dealing with criticality issues.  Indeed, our methods exploit the explicit knowledge of radial solutions suitable combined with the criticality theory developed in \cite{pinch}. Furthermore, the arguments applied are flexible enough to allow to prove sharp inequalities, and in a natural way criticality for related Schr\"odinger operators, also on more general manifolds under upper curvature bounds, see Theorem \ref{general manifold}. Improved Hardy type inequalities are also shown to hold in more general manifolds in \cite{AK}, but they are not stated in terms of (upper) curvature bounds.

We are aware of few Hardy-type inequalities which are related with ours. A first one can be deduced as an application of \cite[Theorem 2.2]{pinch}, by which an optimal weight for the laplacian in $\hn\setminus \{o\}$ is $\frac1{4}\left(\frac{G'(r)}{G(r)}\right)^2$ where, for a suitable positive constant $c$, $G(r)=c\,\int_r^{+\infty} (\sinh s)^{-(N-1)} \,ds$ is the Green function of $-\Delta_{\mathbb H^N}$. Since $\frac1{4}\left(\frac{G'(r)}{G(r)}\right)^2\geq \Lambda$ for every $r>0$, the corresponding inequality \eqref{1} can be read as an improvement of \eqref{poin}. The above weight behaves like the Hardy weight \eqref{Euclidean} near $0$ but converges to $\Lambda$ exponentially fast at infinity, hence it does not give an answer to Problem 1. It's worth noting that in \cite[Example 5.3]{BMR} it is shown how the weight $\frac1{4}\left(\frac{G'(r)}{G(r)}\right)^2$ can be computed by an iterative argument. One sees that the resulting weight is larger than $1/4r^2$ when $r$ is small and to be  smaller than $1/4r^2$ when $r$ is large,
 so the two inequalities are not comparable, as expected since both weights are optimal. The above argument works for model manifolds also, by exploiting the corresponding (known) Green's function, which provides however a much less explicit weight, involving an integral function, when compared to the result given below in Theorem \ref{general manifold}. A second inequality bearing some resemblance with ours is proved in \cite[Example 1.8]{LW}, where a Hardy-type inequality in terms
of a weight weight $w(r)$ tending to $\Lambda$ as $r\to+\infty$, but behaving as $\textrm{const}/r$ as $r\to0$, is shown on general Cartan-Hadamard manifolds with
sec\,$\le-1$. 

When $N=3$, $\frac{1}{4}$
is exactly the classical Hardy constant $\frac{(N-2)^2}{4}$ and \eqref{poincareeq1} can also be seen as an optimal
 Hardy inequality with an optimal $L^2$ remainder term. 
 See also Remark \ref{rem1}. \par
 It is worth noting that, after performing a suitable ``conformal change of metric'', \eqref{poincareeq1} yields an Hardy inequality in the Euclidean ball involving the distance from the boundary, see Corollary \ref{cor}, which is a slight improvement upon a (already optimal) inequality given in \cite{GFT} and seems not to be known. See \cite{GFT,BrezisM} for further improved Euclidean Hardy inequalities involving the distance from the boundary. In a similar way, in Corollary \ref{corHALF1} we provide a nonstandard remainder term for the Hardy-Maz'ya inequality \cite[2.1.6 Corollary 3]{Ma} in the half-space. See also Corollary \ref{mazya}.
 \par

 \subsection{General Cartan-Hadamard manifolds} By the same strategy used on ${\mathbb H}^N$, one can prove related inequalities on model (i.e. spherically symmetric) manifolds, and this enables us to extend the previous result to general manifolds under appropriate curvature assumptions, which allow for sectional curvatures possibly unbounded below. This is the content of Theorem \ref{general manifold}. While negative curvature always implies that a suitable Hardy inequality holds (see \cite{Carron}) it is conceivable that \it unbounded \rm negative curvature implies that the constant term $(N-1)^2/4$ above can be replaced by an unbounded, nonconstant positive potential. In fact, the Hardy weight we construct is explicitly related to sectional curvature in the model manifold naturally associated to the curvature bounds assumed. The weight is unbounded when sectional curvature is unbounded below, thus in particular giving rise to a Schr\"odinger operator $H=-\Delta-V$ with positive, unbounded potential $V$, which is nevertheless controlled from below by the Hardy potential, so that $H\ge1/4r^2+$ (positive remainder terms). The previous result on $\hn$ is of course a special case of this fact. This is our second main result and we stress that this result is again sharp in the following sense: given any $\psi$ as in Theorem \ref{general manifold} there exist a manifold satisfying the upper bound on curvature as requested in \eqref{condition2} in terms of $\psi$ and such that the Schr\"odinger operator defined in Theorem \ref{general manifold}, and involving the Hardy term, is critical.

 \subsection{Rellich-Poincar\'e inequalities} The final topic we shall deal with here is concerned with the validity of \it Rellich-Poincar\'e inequalities\rm, namely inequalities involving the quadratic form of the shifted operator $\Delta_{\hn}^2-\Lambda^2$, where as above $\Lambda=(N-1)^2/4$. Rellich inequalities in the Euclidean setting go back to \cite{R}, and a number of refinement and improvements have been given till quite recently, see e.g. without any claim for completeness \cite{BT, CM, DH, gaz, GM, Mitidieri,TZ}.  The very recent paper \cite{MSS} proposed a method of proof involving a decomposition in spherical harmonics, which turns out to be useful in the present case as well. See also \cite{TZ} and \cite{VZ} where spherical harmonics were applied in the context of Hardy and Rellich inequalities. The basic Euclidean inequality one starts from is the following well-known one:
\begin{equation*}
 \int_{\rn}|\Delta u|^2\,{\rm d}x\ge\frac{N^2(N-4)^2}{16}\int_{\rn}\frac{u^2}{|x|^4}\,{\rm d}x,
\end{equation*}
 valid for all $u\in C_c^\infty(\rn)$ provided $N\ge5$.

Likewise, various forms of Rellich inequalities on $\hn$, including improved ones, have been proved recently in \cite{Kombe1, Kombe2}. We are not aware of further results in this connection and, also motivated by the fact that the following infimum is never attained
 $$
 \inf_{u \in H^{2}(\hn) \setminus \{ 0\}} \frac{\int_{\mathbb{H}^{N}}
|\Delta_{\hn} u|^2 \ dv_{\hn}}{\int_{\hn} |u|^2 \ dv_{\hn}} = \frac{(N-1)^4}{16}\,,
$$
we shall be interested here to deal with the following analogue for higher order of Problem 1:

\vspace{.3truecm}
\noindent \bf Problem 2. \it Does there exist a nonnegative, non identically zero weight $w$ such that the following Rellich-Poincar\'e inequality
\begin{equation}\label{question2}
\int_{{\mathbb H}^N}|\Delta_{\hn} u|^2\,{\rm d}v_{\hn}-\frac{(N-1)^4}{16}\int_{{\mathbb H}^N}u^2\,{\rm d}v_{\hn}\ge \int_{{\mathbb H}^N}w\,u^2\,{\rm d}v_{\hn}
\end{equation}
holds for all $u\in C_c^\infty(\hn)$?
\vspace{.3truecm}
\rm

It is again clear that, if Problem 2 has a positive answer, the constant $(N-1)^4/16$ in the l.h.s. of \eqref{question2} is sharp by construction.

We shall show in Theorem \ref{TPR} that the answer to Problem 2 is affirmative, and show that one can take, setting as before $r=\varrho(x,o)$:
\[
w(x)=\frac{(N-1)^2}{8r^2} +\frac{9}{16\,r^4} + (\textrm{positive correction terms})
\]
In Section \ref{Rellich}  we show that the constant $\frac{(N-1)^2}{8}$ is sharp and we state some facts pointing towards the optimality of $\frac{9}{16}$. It should be remarked that:
\begin{itemize}
\item The positive correction terms in the above expression of $w$ are such that
\[
w(x)\sim \frac{N^2(N-4)^2}{16\,r^4}\ \ \ {\rm as}\ r\to 0,
\]
where the r.h.s. is exactly the optimal Euclidean weight. In such sense, our bound recovers the Euclidean Rellich inequality for functions supported in a ball with small radius. See Remark \ref{rellichsharpness} for a precise statement;
\item After having remarked that the weight $w$ has the sharp Euclidean behaviour for small $r$, it should be noted that the leading term in $w$ is instead the one involving the quantity $1/r^2$ for functions supported \it outside \rm a large ball, namely as $r\to +\infty$. Hence, it is particularly important to determine the sharp constant in front of such a term to capture the non-Euclidean feature (e.g. the leading term when $r$ is large) of the inequality we prove. Notice that the term of the form $1/r^{2}$, which already appeared in some of the (Euclidean) results of \cite{gaz}, of course does not violate any scale invariance for the inequality we consider. The problem of finding the best constant when $w$ is of the form $c/r^4$ remains however open. See however Remark \ref{conj} for some clue pointing towards sharpness of the constant 9/16 found here.
\end{itemize}

We stress that, although the statements look very similar, the proof of our Poincar\'e-Rellich inequality is completely different from the one of \eqref{poincareeq1}. Here, orthogonal decomposition in spherical harmonics and a suitable 1-dimensional Hardy type inequality are the main tools exploited. As in the first order case, we give a sample of the results which can be derived, in the Euclidean space, from our main result, see Corollary \ref{cor2}. When restricting to radial functions a further Euclidean inequality is derived in Proposition \ref{onedimensional}.
 \par

 The paper is organized as follows: in Section \ref{Hardy} we introduce some of the notations and some geometric definitions and we state our Poincar\'{e}-Hardy inequality first on ${\mathbb H}^N$, and then on more general manifolds under sectional curvature assumptions. When $M$ is the hyperbolic space, we give the precise statement of a refinement of \eqref{poincareeq1} in Theorem \ref{poincare} and of the associated Euclidean inequality in Corollary \ref{cor}. It is worth noticing that the weight appearing in the general Theorem \ref{general manifold} has a precise geometrical meaning in terms of sectional curvature of a model manifold, modeled on a function $\psi$ in terms of which
the relevant curvature assumptions are given.

 In Section \ref{Rellich} we state our Poincar\'{e}-Rellich inequality and some Euclidean Rellich inequalities derived from it in the half space. Sections \ref{proof1} contains the proof of the Poincar\'e-Hardy inequality on ${\mathbb H}^N$ and of Theorem \ref{general manifold}. In Section \ref{hyperbolic proof}  we give and alternative proof of optimality in Theorem \ref{poincare} and prove Corollary \ref{cor} as well. Section \ref{rellich-proof} contains the proof of the Poincar\'e-Rellich inequality while Section 7 contains the proof of Corollary \ref{corHALF1}, Corollary \ref{mazya} and Corollary \ref{cor2}. Finally, the proof of Proposition \ref{infinity} is given is Section \ref{loginfinity}.

  \section{Poincar\'{e}-Hardy inequalities}\label{Hardy}

  We state here our main result about Poincar\'e-Hardy inequalities on ${\mathbb H}^N$. Below, $r:=\varrho(x,o)$ for a given pole $o\in{\mathbb H}^N$ and $B_\varepsilon:=\{x\in\hn, \varrho(x,o)<\varepsilon\}$.

  \begin{thm}\label{poincare}
 Let $ N \geq 3$. For all $ u \in C^{\infty}_{c}(\mathbb{H}^{N})$ there holds
 \begin{equation}\label{poincareeq}\begin{aligned}
\int_{\hn} |\nabla_{\hn} u|^2 \ dv_{\hn} - \frac{(N-1)^2}{4} \int_{\hn} u^2 \ dv_{\hn}
\geq &\frac{1}{4} \int_{\hn} \frac{u^2}{r^2} \ dv_{\hn}\\ &+ \frac{(N-1)(N-3)}{4} \int_{\hn} \frac{u^2}{\sinh^2 r} \ dv_{\hn}.
\end{aligned}\end{equation}
Besides, the operator
\[
H=-\Delta_{\mathbb H^N}-\frac{(N-1)^2}4-\frac1{4\,r^2}-\frac{(N-1)(N-3)}{4}  \frac{1}{\sinh^2 r}
\]
is critical in $\mathbb{H}^{N}\setminus\{o\}$ in the sense of \cite[Definition 2.1]{pinch}; that is, the inequality
$$\int_{\hn} |\nabla_{\hn} u|^2 \ dv_{\hn} - \frac{(N-1)^2}{4} \int_{\hn} u^2 \ dv_{\hn}
\geq \int_{\hn}V u^2 \ dv_{\hn}\quad \forall u\in  C^{\infty}_{c}(\mathbb{H}^{N}\setminus\{o\})$$
is not valid for any $V>\frac{1}{4\,r^2}+ \frac{(N-1)(N-3)}{4\, \sinh^2 r}$. \par The constant $\frac{(N-1)^2}{4}$ in \eqref{poincareeq} is of course sharp in the sense that the l.h.s. of \eqref{poincareeq} can be negative if such constant is replaced by a larger one, and the criticality of the operator $H$ yields that also the constant $\frac{1}{4}$ in \eqref{poincareeq} is sharp in the sense that no inequality of the form
\[
\int_{\hn} |\nabla_{\hn} u|^2 \ dv_{\hn} - \frac{(N-1)^2}{4} \int_{\hn} u^2 \ dv_{\hn}\ge
c\, \int_{\hn} \frac{u^2}{r^2} \ dv_{\hn}
\]
holds for all $ u \in C^{\infty}_{c}(\mathbb{H}^{N})$ when $c>1/4$. Finally, the constant $\frac{(N-1)(N-3)}{4}$ is sharp as well in the sense that no inequality of the form
\[
\int_{\hn} |\nabla_{\hn} u|^2 \ dv_{g} - \frac{(N-1)^2}{4} \int_{\hn} u^2 \ dv_{\hn}
\geq \frac{1}{4} \int_{\hn} \frac{u^2}{r^2} \ dv_{\hn}+ c \int_{\hn} \frac{u^2}{\sinh^2 r} \ dv_{\hn}.
\]
holds, given any $\varepsilon>0$, for all $ u \in C^{\infty}_{c}(B_\varepsilon)$ when $c>(N-1)(N-3)/4$.

\end{thm}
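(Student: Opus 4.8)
The plan is to prove the inequality \eqref{poincareeq} by a supersolution (ground state substitution) technique and then to deduce all the sharpness statements from the \emph{criticality} of the operator $H$ in $\mathbb H^N\setminus\{o\}$, the latter being established by exhibiting an explicit positive solution of $Hu=0$ that is a null-state (a.k.a. ground state) in the sense of \cite[Definition~2.1]{pinch}. Concretely, writing the hyperbolic Laplacian in polar coordinates around $o$ as $\Delta_{\mathbb H^N}=\partial_r^2+(N-1)\coth r\,\partial_r+(\sinh r)^{-2}\Delta_{\mathbb S^{N-1}}$, I would look for a positive radial function $\varphi(r)$ annihilating $H$; the guess suggested by the asymptotics (Euclidean $r^{(2-N)/2}$-type behaviour near $o$, and the expected $r^{1/2}e^{-(N-1)r/2}$-type behaviour at infinity) is something of the form $\varphi(r)=r^{1/2}(\sinh r)^{-(N-1)/2}$, or a close variant thereof. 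A direct computation of $\Delta_{\mathbb H^N}\varphi/\varphi$ should reproduce exactly the potential $\frac{(N-1)^2}{4}+\frac{1}{4r^2}+\frac{(N-1)(N-3)}{4\sinh^2 r}$, confirming $H\varphi=0$. Then, for $u\in C_c^\infty(\mathbb H^N)$, the substitution $u=\varphi v$ and integration by parts gives
\[
q(u)-\int_{\mathbb H^N}\Big(\tfrac{(N-1)^2}{4}+\tfrac{1}{4r^2}+\tfrac{(N-1)(N-3)}{4\sinh^2 r}\Big)u^2\,dv_{\mathbb H^N}=\int_{\mathbb H^N}\varphi^2|\nabla_{\mathbb H^N}v|^2\,dv_{\mathbb H^N}\ge 0,
\]
which is precisely \eqref{poincareeq}. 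One has to check that the boundary terms at $o$ vanish, which they do because $\varphi^2\sim r^{2-N}$ while $v=u/\varphi$ is bounded near $o$; this is the only mildly delicate point in the derivation of the inequality itself.

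For the criticality claim, I would invoke the characterization from \cite{pinch}: a nonnegative operator $H$ is critical iff it admits a (unique up to scalars) positive supersolution that is in fact a solution, equivalently iff it has no positive supersolution strictly dominating some fixed solution, equivalently iff there is a null-sequence $u_k\in C_c^\infty$ with $\int\varphi^2|\nabla(u_k/\varphi)|^2\to 0$ while $u_k/\varphi\to 1$ locally. The natural null-sequence is $u_k=\varphi\,\eta_k$ where $\eta_k$ are logarithmic cut-offs adapted to the two ends (near $o$ and near infinity): since $\varphi^2$ grows like $r^{2-N}$ at $o$ and decays like $e^{-(N-1)r}$ at infinity, a two-sided logarithmic cut-off makes $\int\varphi^2|\nabla\eta_k|^2\to 0$. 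This establishes that $H$ is critical in $\mathbb H^N\setminus\{o\}$, and the standard consequence is that no potential strictly larger than $V_0:=\frac{1}{4r^2}+\frac{(N-1)(N-3)}{4\sinh^2 r}$ can be inserted on the right-hand side — proving at once the sharpness of the constant $1/4$ in front of $r^{-2}$ (test functions supported away from $o$ are admissible here, so the pole is harmless) and the sharpness of $(N-1)^2/4$ (otherwise the left side could be made negative, while criticality forces it to be $\ge 0$).

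The last assertion — sharpness of $\frac{(N-1)(N-3)}{4}$ among test functions supported in an arbitrarily small ball $B_\varepsilon$ — does not follow from global criticality and requires a separate, local, argument. Here I would exploit that on $B_\varepsilon$ the metric is close to Euclidean, so that as $\varepsilon\to 0$ the inequality
\[
\int_{B_\varepsilon}|\nabla_{\mathbb H^N}u|^2\,dv_{\mathbb H^N}-\tfrac{(N-1)^2}{4}\int_{B_\varepsilon}u^2\,dv_{\mathbb H^N}\ge \tfrac14\int_{B_\varepsilon}\tfrac{u^2}{r^2}\,dv_{\mathbb H^N}+c\int_{B_\varepsilon}\tfrac{u^2}{\sinh^2 r}\,dv_{\mathbb H^N}
\]
must degenerate to the \emph{sharp} Euclidean Hardy inequality with constant $(N-2)^2/4$; since $\sinh^{-2}r\sim r^{-2}$ near $0$, the right-hand side effectively has weight $(\tfrac14+c)r^{-2}$ plus lower-order terms, so if $\tfrac14+c>\tfrac{(N-2)^2}{4}$, i.e.\ $c>\tfrac{(N-1)(N-3)}{4}$ (using $(N-2)^2-1=(N-1)(N-3)$), the inequality fails for suitable $u$ concentrating near $o$. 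Making this rigorous means choosing a nearly-optimal Euclidean Hardy test function, rescaling it into $B_\varepsilon$, and carefully controlling the remainder terms coming from the difference between the hyperbolic and Euclidean metrics and measures on $B_\varepsilon$. I expect this local degeneration argument to be the main obstacle: one must quantify, uniformly in the scaling parameter, that all "hyperbolic corrections" (the $-\tfrac{(N-1)^2}{4}\|u\|_2^2$ term, the discrepancy between $\coth r$ and $1/r$, and between $dv_{\mathbb H^N}$ and $dx$) are genuinely of lower order than the Hardy term, so that they cannot rescue a supercritical constant.
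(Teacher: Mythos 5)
Your construction of the ground state is exactly the paper's: with $\Phi(r)=(r/\sinh r)^{(N-1)/2}$ and $f(r)=r^{(2-N)/2}$ the paper's Proposition 4.3 shows that $\varphi(r)=r^{1/2}(\sinh r)^{-(N-1)/2}$ solves $H\varphi=0$ exactly, and the inequality \eqref{poincareeq} is then obtained by the supersolution principle (the paper quotes \cite[Theorem 1.5.12]{DA} rather than writing out the ground-state substitution $u=\varphi v$, but the two are equivalent, and your check of the boundary term at $o$ is the right point to worry about). For criticality the paper takes a slightly different, but equally standard, route: it exhibits the second radial solution $v_-=r^{(2-N)/2}\log(r^{2-N})\,\Phi(r)$ and uses $\lim_{r\to0}v_+/v_-=\lim_{r\to\infty}v_+/|v_-|=0$ together with \cite[Proposition 6.1]{pinch} to conclude that $v_+=\varphi$ has minimal growth at both ends of $\mathbb H^N\setminus\{o\}$. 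Your null-sequence argument is a legitimate alternative: since $\varphi^2\,dv_{\hn}=\omega_{N-1}\,r\,dr$ radially, both ends are logarithmically parabolic and two-sided logarithmic cut-offs do the job. Likewise, your treatment of the last claim (sharpness of $(N-1)(N-3)/4$ on small balls) is the paper's argument — $\tfrac14+\tfrac{(N-1)(N-3)}{4}=\tfrac{(N-2)^2}{4}$, $\sinh r\sim r$, and the local optimality of the Euclidean Hardy constant — just phrased as a rescaling.

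The one genuine gap is your claim that criticality gives the sharpness of $1/4$ ``at once''. Criticality forbids inequalities with a potential $V\ge V_0:=\tfrac{1}{4r^2}+\tfrac{(N-1)(N-3)}{4\sinh^2 r}$, $V\not\equiv V_0$; but for $N>3$ and $1/4<c<(N-2)^2/4$ the candidate potential $c/r^2$ does \emph{not} dominate $V_0$ pointwise, since near the pole $V_0\sim(N-2)^2/(4r^2)>c/r^2$. So the failure of $q(u)\ge c\int u^2/r^2$ is not an immediate consequence of the ``no larger $V$'' statement. The paper closes this in two independent ways: (i) the oscillation argument of Remark 2.1 (for $\eta>1$ all radial solutions of $Pv=\eta\,v/(4r^2)$ oscillate near infinity, so the form becomes negative on functions supported near infinity, where the $\sinh^{-2}r$ term is negligible compared with $r^{-2}$); (ii) the self-contained proof in Section 5, which transplants the inequality to the Euclidean unit ball via the conformal map \eqref{transf} and compares with the already-optimal inequality of \cite[Theorem A]{GFT} to force $C_{\hn}\le 1/4$. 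You would need to add one of these (or an equivalent localization-at-infinity argument) to make that step rigorous.
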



\begin{rem}\label{remarkoptimal}Set
$$P: =  -\Delta_{\mathbb{H}^{N}} -  \frac{(N-1)^2}{4} - \frac{(N-1)(N-3)}{4\sinh^2 r}\quad \text{and} \quad W (r): = \frac{1}{4r^2}\ \,,$$
by Theorem \ref{poincare} the operator $P-W$ is critical and since the corresponding ground state does not lie in  $L^{2}(\mathbb{H}^{N} \setminus \{o\}, W),$ $P-W$ is also null critical, see the proof of Theorem \ref{poincare} and \cite[Definition 4.8]{pinch}. Furthermore, arguing as in \cite[Example 3.1]{pinch}, since for $\eta >1$ the radial solutions of the equation $P v=\eta W\, v$ oscillate near zero and near infinity it follows that the best possible constant for the validity of the inequality associated to $P-\eta W$, in any neighborhood of either the origin or infinity, is $\eta=1$. Besides, the bottom of the spectrum and the bottom of the essential spectrum of $W^{-1} P$ is $1$.
\end{rem}

\begin{rem}\label{rem1}
Recalling \eqref{poin}, \eqref{poincareeq} can be seen as an improvement of the (best possible) Poincar\'{e} inequality where an optimal Hardy remainder terms have been added.  On the other hand, when $N=3$, $\frac{1}{4}$
is exactly the classical Hardy constant $\frac{(N-2)^2}{4}$ and \eqref{poincareeq} can also be seen as the (best possible)
 Hardy inequality with an $L^2$ remainder term.\par Besides, if $ N \geq3$ and $\lambda \in[0, \lambda_1(\hn)]$, from Theorem \ref{poincare}  it is easily deduced the existence of a positive constant $h(\lambda)$ such that the following family of inequalities holds
\begin{equation}\label{lambda}
\int_{\hn} |\nabla_{\hn} u|^2 \ dv_{\hn} -\lambda \int_{\hn} u^2 \ dv_{\hn}
\geq h(\lambda) \int_{\hn} \frac{u^2}{r^2} \ dv_{\hn},
 \end{equation}
 for every $ u \in C^{\infty}_{0}(\hn)$. Moreover one has:
\begin{itemize}
\item[$\bullet$] $h(0)=\frac{(N-2)^2}{4}$ is the Euclidean Hardy constant and equality
 in \eqref{lambda} is not achieved;
\item[$\bullet$] $h(\lambda_1(\hn))=\frac{1}{4}$ and equality in \eqref{lambda} is not achieved;
\item[$\bullet$] the map $\lambda \mapsto h(\lambda)$ is non increasing and concave, hence continuous.
\end{itemize}
Furthermore, from \cite[Theorem 5.2]{Yang} we know that
$$h(\lambda)=\frac{(N-2)^2}{4}\quad \forall\, 0\,\leq \lambda \leq \bar \lambda_N\,,$$
where $\frac{N-1}{4}\leq\bar \lambda_N \leq \lambda_1(\hn)$. Our results yield the further information: $\bar \lambda_3= \lambda_1(\mathbb{H}^{3})$ and $\bar \lambda_N< \lambda_1(\hn)$ for all $N>3$.
\end{rem}




Let $B(0,1)$ be the Euclidean unit ball and $\sigma: B(0,1) \rightarrow \hn$, where $\hn$ is the ball model for the hyperbolic space, be the conformal map. By defining

 \begin{equation}\label{transf}
 v(x) = \left( \frac{2}{1 - |x|^2} \right)^{\frac{N-2}{2}} u(\sigma(x))\quad x\in B(0,1)
\end{equation}
from Theorem \ref{poincare} we derive

\begin{cor}\label{cor}
Let $ N \geq 3.$ For all $ u \in C^{\infty}_{0}(B(0,1))$ the following inequality with optimal constants holds
\begin{equation*}
\int_{B(0,1)} |\nabla v|^2 \ dx - \frac{1}{4} \int_{B(0,1)} \left( \frac{2}{ 1 - |x|^2} \right)^2 v^2 \ dx \geq
\frac{1}{4} \int_{B(0,1)} \left( \frac{2}{ 1- |x|^2} \right)^2  \frac{v^2}{\left(\log\left( \frac{1 + |x|}{ 1- |x|}   \right) \right)^2}
\ dx,
\end{equation*}
where $dx$ denotes the Euclidean volume.
\end{cor}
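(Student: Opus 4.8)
The plan is to transport inequality \eqref{poincareeq} from $\hn$ to the Euclidean ball $B(0,1)$ via the conformal map $\sigma$ of \eqref{transf}, using the conformal covariance of the Yamabe operator together with the explicit description, in the ball model, of the conformal factor, of the geodesic distance and of $\sinh r$. \emph{Geometric preliminaries.} In the ball model one has $g_{\hn}=p^2\,g_{\mathrm{Eucl}}$ with $p(x)=2/(1-|x|^2)$, hence $dv_{\hn}=p^N\,dx$ and $|\nabla_{\hn}u|^2=p^{-2}|\nabla u|^2$; moreover the geodesic distance from the pole is $r=\log\frac{1+|x|}{1-|x|}$, and a short computation gives the identity $\sinh^2 r=4|x|^2/(1-|x|^2)^2=p^2|x|^2$, which will turn the $\sinh^{-2}r$ weight into a harmless Euclidean weight.

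\emph{Change of the quadratic form.} Writing $v=p^{(N-2)/2}u$ as in \eqref{transf} and integrating by parts, one finds
\[
\int_{\hn}|\nabla_{\hn}u|^2\,dv_{\hn}=\int_{B(0,1)}|\nabla v|^2\,dx+\int_{B(0,1)}\Bigl[\bigl(\alpha^2-\alpha\bigr)\frac{|\nabla p|^2}{p^2}+\alpha\,\frac{\Delta p}{p}\Bigr]v^2\,dx,\qquad \alpha=\tfrac{N-2}{2},
\]
and since $|\nabla p|^2/p^2=p^2|x|^2$ and $\Delta p/p=Np+2p^2|x|^2$, the bracket collapses to $\tfrac{N(N-2)}{4}\,p^2$, because $p^2|x|^2+2p=p^2$; equivalently, this is just the conformal covariance $L_{g_{\hn}}u=p^{-(N+2)/2}\bigl(-\Delta(p^{(N-2)/2}u)\bigr)$ of $L_{g_{\hn}}=-\Delta_{\hn}-\tfrac{N(N-2)}{4}$ (recall $R_{\hn}=-N(N-1)$). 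Together with $\int_{\hn}u^2\,dv_{\hn}=\int_{B(0,1)}p^2v^2\,dx$ and the arithmetic identity $\tfrac{(N-1)^2}{4}-\tfrac{N(N-2)}{4}=\tfrac14$, this rewrites the left-hand side of \eqref{poincareeq} as $\int_{B(0,1)}|\nabla v|^2\,dx-\tfrac14\int_{B(0,1)}p^2v^2\,dx$, the first term on its right-hand side as $\tfrac14\int_{B(0,1)}p^2v^2/r^2\,dx$, and the second one as $\tfrac{(N-1)(N-3)}{4}\int_{B(0,1)}v^2/|x|^2\,dx\ge0$. Substituting $p=2/(1-|x|^2)$ and $r=\log\frac{1+|x|}{1-|x|}$, and dropping the last term (which is nonnegative since $N\ge3$), yields exactly the asserted inequality.

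\emph{Optimality.} Since $u\mapsto v$ is a linear bijection of $C^\infty_0(\hn)$ onto $C^\infty_0(B(0,1))$ and the discarded term is nonnegative for $N\ge3$, any enlargement of the constant $\tfrac14$ multiplying $\bigl(2/(1-|x|^2)\bigr)^2v^2$, resp. of the Hardy constant $\tfrac14$ on the right, would transport back to a contradiction with the sharpness of $(N-1)^2/4$, resp. of $\tfrac14$, established in Theorem \ref{poincare}. The only delicate point is the bookkeeping of the conformal change of the quadratic form — in particular checking that the residual shift constant equals exactly $\tfrac14$ and that the weights transform precisely as stated — but this is a routine, if somewhat lengthy, computation, and I anticipate no genuine obstacle.
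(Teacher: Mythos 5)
Your proof of the inequality itself is, in substance, the paper's own: the conformal substitution \eqref{transf} with $p=2/(1-|x|^2)$, the three identities $\int_{\hn}|\nabla_{\hn}u|^2\,dv_{\hn}=\int|\nabla v|^2\,dx+\tfrac{N(N-2)}{4}\int p^2v^2\,dx$, $\int_{\hn}u^2\,dv_{\hn}=\int p^2v^2\,dx$ and $\int_{\hn}u^2r^{-2}\,dv_{\hn}=\int p^2 v^2\bigl(\log\tfrac{1+|x|}{1-|x|}\bigr)^{-2}dx$, the arithmetic $\tfrac{(N-1)^2}{4}-\tfrac{N(N-2)}{4}=\tfrac14$, and the discarding of the nonnegative $\sinh^{-2}r$ term (which, via $\sinh r=p|x|$, becomes $\tfrac{(N-1)(N-3)}{4}\int v^2|x|^{-2}dx\ge 0$) are exactly the computations \eqref{conformal1}--\eqref{conformal3} of Section \ref{hyperbolic proof}, and your bookkeeping checks out. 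Where you genuinely diverge is the optimality: you transport the sharpness of the constants \emph{forward} from Theorem \ref{poincare} (whose proof rests on the criticality/ground-state argument of Section \ref{proof1}), using that the conformal identities are exact equalities so a better Euclidean constant would pull back to a better hyperbolic one. The paper instead runs the argument in the \emph{opposite} direction: it bounds $C_{\hn}\le\tfrac14$ by comparing the transformed inequality \eqref{bestconstant3} with the already-sharp Euclidean distance-to-the-boundary inequality of \cite[Theorem A]{GFT}, and Section \ref{hyperbolic proof} is explicitly framed as an alternative, criticality-free proof of optimality in Theorem \ref{poincare}. Both routes are valid; yours makes the Corollary a pure corollary of the hyperbolic theorem, while the paper's buys an independent confirmation of the hyperbolic sharpness from known Euclidean results. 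Either way the conclusion stands, so there is no gap.
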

 As far as we are aware this inequality is not known in literature and is a slight improvement upon an inequality proved in \cite[Theorem A]{GFT}, which is already sharp in a suitable sense, see Section \ref{hyperbolic proof}.

 Finally, in the spirit of \cite[Appendix B]{mancini}, we consider the upper half space model for $\hn$, namely $\mathbb{R}^{N}_{+} = \{ (x, y) \in \mathbb{R}^{N-1} \times \mathbb{R}^{+} \} $ endowed with the Riemannian metric $\frac{\delta_{ij}}{y^2}$. By exploiting the transformation
 \begin{equation} \label{halftransformation}
 v(x,y):= y^{-\frac{N-2}{2}} u(x,y), \ \ x \in \mathbb{R}^{N-1}, y \in \mathbb{R}^{+}\,.
 \end{equation}

for  $u \in C_c^{\infty}(\mathbb{H}^{N}),$ \eqref{poincareeq} yields an improved Hardy-Maz'ya inequality in the half space. Before stating it, we recall that the constant 1/4 in the Hardy-Maz'ya inequality
\begin{equation*}
\int_{\mathbb R^{+}} \int_{\mathbb{R}^{N-1}} |\nabla v|^2 \ dx \ dy \geq \frac{1}{4} \int_{\mathbb{R}^{+}} \int_{\mathbb{R}^{N-1}} \frac{v^2}{y^2} \ dx \ dy\,,
\end{equation*}
where $(x, y) \in \mathbb{R}^{N-1} \times \mathbb{R}^{+}$, is sharp, see \cite[2.1.6, Cor. 3]{Ma} and also \cite{FMT,FTT}.

\begin{cor}\label{corHALF1}
Let $N \geq 3.$ For all $v \in C_c^{\infty}(\mathbb{R}^{N}_{+})$ the following inequality holds
\begin{equation}\label{mazya1}
\int_{\mathbb R^{+}} \int_{\mathbb{R}^{N-1}} |\nabla v|^2 \ dx \ dy - \frac{1}{4} \int_{\mathbb{R}^{+}} \int_{\mathbb{R}^{N-1}} \frac{v^2}{y^2} \ dx \ dy \geq \frac{1}{4} \int_{\mathbb{R}^{+}} \int_{\mathbb{R}^{N-1}} \frac{v^2}{y^2 d^2} \ dx \ dy,
\end{equation}
where $(x, y) \in \mathbb{R}^{N-1} \times \mathbb{R}^{+}$ and $d: = \cosh^{-1} \left( 1 + \frac{(y - 1)^2 + |x|^2}{2 y} \right)$. The constant 1/4 in the r.h.s. of \eqref{mazya1} is sharp.
\end{cor}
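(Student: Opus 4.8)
The plan is to transplant inequality \eqref{poincareeq} from $\hn$ to the half-space via the conformal change \eqref{halftransformation}, in the spirit of \cite[Appendix B]{mancini}. Recall that in the upper half-space model $\mathbb{R}^{N}_{+}$ with metric $y^{-2}\delta_{ij}$ one has $dv_{\hn}=y^{-N}\,dx\,dy$ and $|\nabla_{\hn}u|^2=y^{2}|\nabla u|^2$. Writing $u=y^{\frac{N-2}{2}}v$ for $v\in C_c^{\infty}(\mathbb{R}^{N}_{+})$ — which is a bijection of $C_c^{\infty}(\mathbb{R}^{N}_{+})$ onto $C_c^{\infty}(\hn)$, since on the support $y$ is bounded and bounded away from $0$ — expanding $|\nabla u|^2$ and integrating by parts once in the $y$ variable, with no boundary term because $\mathrm{supp}\,u$ is a compact subset of $\{y>0\}$, one obtains the identity
\begin{equation*}
\int_{\hn}|\nabla_{\hn}u|^2\,dv_{\hn}-\frac{(N-1)^2}{4}\int_{\hn}u^2\,dv_{\hn}=\int_{\mathbb{R}^{N}_{+}}|\nabla v|^2\,dx\,dy-\frac14\int_{\mathbb{R}^{N}_{+}}\frac{v^2}{y^2}\,dx\,dy .
\end{equation*}
Taking the pole $o=(0,1)\in\mathbb{R}^{N}_{+}$, the standard distance formula in this model gives $r:=\varrho((x,y),o)=\cosh^{-1}\!\left(1+\frac{(y-1)^2+|x|^2}{2y}\right)=d$, so that $\frac14\int_{\hn}\frac{u^2}{r^2}\,dv_{\hn}=\frac14\int_{\mathbb{R}^{N}_{+}}\frac{v^2}{y^2 d^2}\,dx\,dy$.

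Inequality \eqref{mazya1} then follows immediately: apply \eqref{poincareeq} to $u$ and discard, on its right-hand side, the nonnegative term $\frac{(N-1)(N-3)}{4}\int_{\hn}\frac{u^2}{\sinh^2 r}\,dv_{\hn}\ge 0$, which is permissible for every $N\ge 3$; combining with the two identities above yields \eqref{mazya1} for all $v\in C_c^{\infty}(\mathbb{R}^{N}_{+})$.

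For the sharpness of the constant $1/4$ in the right-hand side of \eqref{mazya1}, note that, by the two identities above and the bijectivity of \eqref{halftransformation}, for any fixed $c>0$ the Euclidean inequality $\int_{\mathbb{R}^{N}_{+}}|\nabla v|^2\,dx\,dy-\frac14\int_{\mathbb{R}^{N}_{+}}\frac{v^2}{y^2}\,dx\,dy\ge c\int_{\mathbb{R}^{N}_{+}}\frac{v^2}{y^2 d^2}\,dx\,dy$, for all $v\in C_c^{\infty}(\mathbb{R}^{N}_{+})$, is \emph{equivalent} to the hyperbolic inequality $\int_{\hn}|\nabla_{\hn}u|^2\,dv_{\hn}-\frac{(N-1)^2}{4}\int_{\hn}u^2\,dv_{\hn}\ge c\int_{\hn}\frac{u^2}{r^2}\,dv_{\hn}$, for all $u\in C_c^{\infty}(\hn)$. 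By the sharpness of the constant $1/4$ established in Theorem \ref{poincare}, the latter fails for every $c>1/4$, and hence so does the former; thus $1/4$ in \eqref{mazya1} cannot be replaced by any larger constant.

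Since the whole argument is a change of variables applied to Theorem \ref{poincare}, no real obstacle is expected. The only points to check carefully are the vanishing of the boundary term in the integration by parts, guaranteed by the compactness of the support in $\{y>0\}$, and the identification of the hyperbolic distance from $(x,y)$ to $(0,1)$ with the quantity $d$ of the statement; everything else is routine algebra with the conformal factor.
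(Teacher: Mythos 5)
Your proposal is correct and follows essentially the same route as the paper: the conformal substitution $u=y^{(N-2)/2}v$ turning the shifted hyperbolic Dirichlet form into $\int|\nabla v|^2-\tfrac14\int v^2/y^2$, the identification of $r$ with $d$ via the half-space distance formula, discarding the nonnegative $\sinh^{-2}$ term, and transferring sharpness of $1/4$ back from Theorem \ref{poincare} through the same identities. No issues.
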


\begin{rem}
It is easy to see that $d: = d((x,y),(0,1) ) \sim \log(1/y)$ as $y \rightarrow 0$.

\end{rem}

Using similar arguments we have the following improved Hardy-Maz'ya inequality, see \cite[Appendix B]{mancini} for further details.

\begin{cor}\label{mazya}
Let $N \geq 3$ and $k\in \mathbb{N}_+$. For all $v=v(x,y) \in C_c^{\infty}(\mathbb{R}^{N-1} \times \mathbb{R}^{k}),$ with $v(x,0)=0$ if $k=1$, the following inequality with optimal constants (in the sense of Corollary \ref{corHALF1}) holds

\begin{equation*}
\int_{\mathbb{R}^{k}} \int_{\mathbb{R}^{N-1}} |\nabla v|^2 \ dx \ dy \geq \frac{(k-2)^2}{4} \int_{\mathbb{R}^{k}} \int_{\mathbb{R}^{N-1}}\frac{v^2}{y^2} \ dx \ dy
+ \frac{1}{4}\int_{\mathbb{R}^{k}} \int_{\mathbb{R}^{N-1}} \frac{v^2}{y^2 d^2} \ dx \ dy,
\end{equation*}
where $d: = \cosh^{-1} \left( 1 + \frac{(|y| - 1)^2 + |x|^2}{2 |y|} \right). $
\end{cor}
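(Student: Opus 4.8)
The final statement to prove is Corollary \ref{mazya}, which upgrades the Hardy--Maz'ya inequality in $\mathbb{R}^{N-1}\times\mathbb{R}^k$ by adding the remainder term with weight $1/(y^2 d^2)$, where $d=\cosh^{-1}(1+(|y|-1)^2+|x|^2)/(2|y|))$.

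=== PROOF PROPOSAL (LaTeX) ===

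\begin{pf}
The plan is to reduce the inequality in $\mathbb{R}^{N-1}\times\mathbb{R}^{k}$ to the already established inequality of Corollary \ref{corHALF1} (the case $k=1$, i.e. the improved Hardy--Maz'ya inequality in the genuine half-space $\mathbb{R}^{N}_{+}$) by an averaging / symmetrization argument in the $y$--variable, exactly in the spirit of \cite[Appendix B]{mancini}.

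First, observe that for $v\in C_c^{\infty}(\mathbb{R}^{N-1}\times\mathbb{R}^{k})$ both sides of the asserted inequality depend, through $|\nabla v|^2$, only on the full gradient; I would split $\nabla v=(\nabla_x v,\nabla_y v)$ and, writing $\nabla_y v$ in polar coordinates $y=\rho\omega$ with $\rho=|y|\in\mathbb{R}^{+}$ and $\omega\in S^{k-1}$, note $|\nabla_y v|^2=|\partial_\rho v|^2+\rho^{-2}|\nabla_\omega v|^2\ge |\partial_\rho v|^2$. Hence, after integrating in $\omega$ over $S^{k-1}$ (which contributes the factor $\rho^{k-1}$ from the volume element $dy=\rho^{k-1}\,d\rho\,d\omega$), it suffices to prove, for each fixed $x$, the one-dimensional-in-$\rho$ weighted inequality
\begin{equation*}
\int_{0}^{\infty}\!\!\int_{\mathbb{R}^{N-1}} |\partial_\rho v|^2\,\rho^{k-1}\,dx\,d\rho
\ge \frac{(k-2)^2}{4}\int_{0}^{\infty}\!\!\int_{\mathbb{R}^{N-1}}\frac{v^2}{\rho^{2}}\,\rho^{k-1}\,dx\,d\rho
+\frac14\int_{0}^{\infty}\!\!\int_{\mathbb{R}^{N-1}}\frac{v^2}{\rho^{2}d^2}\,\rho^{k-1}\,dx\,d\rho,
\end{equation*}
together with the $x$--gradient term handled trivially since $\int |\nabla_x v|^2\ge 0$. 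The weight $\rho^{k-1}$ is then absorbed by the substitution $w(x,\rho):=\rho^{(k-2)/2}\,v(x,\rho)$ (when $k=1$ this is legitimate precisely because of the boundary condition $v(x,0)=0$), which converts the measure $\rho^{k-1}\,d\rho$ into $d\rho$ and, by the standard computation behind the Hardy--Maz'ya inequality, produces the constant $(k-2)^2/4$ as the contribution of the radial change of variables; what remains to be proved is exactly the inequality of Corollary \ref{corHALF1} for the function $w$, with $\rho$ now playing the role of the half-space variable $y$.

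The only nontrivial point is that the weight $d$ appearing here is the \emph{hyperbolic distance} $d=d((x,\rho),(0,1))$ in the upper half-space model $\mathbb{R}^{N}_{+}$ (one checks $\cosh d=1+((\rho-1)^2+|x|^2)/(2\rho)$), which is precisely the weight that appears in Corollary \ref{corHALF1}; thus after the substitution the desired bound is literally \eqref{mazya1} applied to $w$. Finally, the optimality of the constant $1/4$ in front of the new remainder term, as well as of $(k-2)^2/4$ in front of $v^2/y^2$, follows in the same way as in Corollary \ref{corHALF1}: one tests with functions concentrating near a point of $\mathbb{R}^{N-1}\times\{|y|=1\}$, where $d\to 0$, reducing to the local Euclidean/hyperbolic sharpness already established; I would simply remark that the extremal sequences of Corollary \ref{corHALF1}, suitably radialized in $y$, serve here as well.

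The main obstacle I anticipate is the bookkeeping in the change of variables $w=\rho^{(k-2)/2}v$ when $k=1$ (and $k=2$): one must verify that no boundary terms at $\rho=0$ are dropped — for $k\ge 3$ the weight $\rho^{k-1}$ kills them automatically, for $k=2$ the constant $(k-2)^2/4$ vanishes and the statement is purely the remainder-term bound, and for $k=1$ the hypothesis $v(x,0)=0$ is exactly what is needed to integrate by parts without a boundary contribution. Once this is dispatched, the reduction to Corollary \ref{corHALF1} is immediate.
\end{pf}
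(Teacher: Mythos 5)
Your overall strategy---discard the angular part of $\nabla_y v$ to reduce to cylindrically symmetric functions, then absorb the Jacobian $\rho^{k-1}$ ($\rho=|y|$) by a power substitution so as to land on the half-space inequality \eqref{mazya1} of Corollary \ref{corHALF1}---is sound, and is essentially a reparametrization of the paper's argument, which instead pulls \eqref{poincareeq} back in one step through a power-of-$|y|$ substitution turning $u$ on the hyperbolic space into a cylindrically symmetric $v$ on $\mathbb{R}^{N-1}\times\mathbb{R}^k$, invoking the density argument of \cite[Appendix B]{mancini} for general $v$. However, your substitution has the wrong exponent, and as written the computation does not yield the constant $(k-2)^2/4$. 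If $w=\rho^{(k-2)/2}v$, then $\int v^2\,W(\rho)\,\rho^{k-1}\,d\rho=\int w^2\,W(\rho)\,\rho\,d\rho$: the measure becomes $\rho\,d\rho$, not $d\rho$, and Corollary \ref{corHALF1} cannot be applied to $w$. The correct choice is $w=\rho^{(k-1)/2}v$, for which every zero-order term transforms with measure $d\rho$ and, after one integration by parts,
\begin{equation*}
\int_0^\infty|\partial_\rho v|^2\,\rho^{k-1}\,d\rho=\int_0^\infty|\partial_\rho w|^2\,d\rho+\frac{(k-1)(k-3)}{4}\int_0^\infty\frac{w^2}{\rho^2}\,d\rho .
\end{equation*}
Applying \eqref{mazya1} to $w$ and using $\frac{(k-1)(k-3)}{4}+\frac14=\frac{(k-2)^2}{4}$ gives the stated inequality. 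Note in particular that $(k-2)^2/4$ is \emph{not} "produced by the radial change of variables" alone, as you assert: it is the sum of the (possibly negative, e.g.\ for $k=2$) term $(k-1)(k-3)/4$ coming from the substitution and the $1/4$ inherited from Corollary \ref{corHALF1}.

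A second point needs more care than your closing remark gives it. For $k=1$ the hypothesis $v(x,0)=0$ indeed makes $w=v$ admissible, and for $k\ge3$ the function $w=\rho^{(k-1)/2}v$ vanishes at $\rho=0$ fast enough that both the integration by parts above and the approximation by functions supported away from $\{\rho=0\}$ are harmless. For $k=2$, however, $w=\rho^{1/2}v$ satisfies $\int_0(\partial_\rho w)^2\,d\rho=+\infty$ whenever $v(x,0)\neq0$, the boundary term $[\rho^{-1}w^2]$ does not vanish, and the identity above degenerates to $\infty-\infty$; saying that "the constant $(k-2)^2/4$ vanishes" does not dispose of this. One must first reduce to $v$ vanishing near $\{y=0\}$ (legitimate since $\mathbb{R}^{N-1}\times\{0\}$ has zero $H^1$-capacity in codimension $2$ and the weight $1/(y^2d^2)\sim 1/(|y|^2\log^2|y|)$ is locally integrable, so Fatou applies); this is exactly the density argument of \cite[Appendix B]{mancini} that the paper invokes. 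With these two corrections your proof goes through and is equivalent to the paper's.
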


The results of Theorem \ref{poincare} can be generalized to more general manifolds under suitable curvature assumptions, which allow for curvature being unbounded below and yield a stronger Hardy inequality in such cases. In fact we have the following

\begin{thm}\label{general manifold}
 Let $N\geq 3$ and $M$ be a Riemannian Manifold with a pole $o$ satisfying the assumptions

 \begin{equation}\label{condition1}
   Cut \{o \} = \phi.
 \end{equation}
   \begin{equation}\label{condition2}
 K_{R}(x) \leq - \frac{\psi^{\prime \prime}}{\psi}\quad \forall\, x\in M,
   \end{equation}
 where $K_R$ denotes sectional curvature in the radial direction, $\psi$ is a positive, $C^2$ function which is increasing and such that $\psi(0)=\psi''(0)=0$, $\psi'(0)=1$. Moreover we also require that
 \begin{equation}\label{condition3}
  (N-2)\psi^{\prime} +(N-1) r \psi^{\prime \prime} \geq 0.
 \end{equation}
Then for all $u \in C_c^{\infty}(M),$ there holds

\begin{equation}\label{generalHardy}\begin{aligned}
 \int_{M} |\nabla_{M} u|^2 -  &\frac{(N-1)}{4}  \int_{M} \left[ 2\frac{\psi^{\prime \prime}}{\psi}+ (N-3)\frac{(\psi^{\prime 2}-1)}{\psi^2}\right] u^2\\ & \geq \frac{1}{4} \int_{M} \frac{u^2}{r^2}+\frac{(N-1)(N-3)}{4}\int_{M} \frac{u^2}{\psi^2}.\end{aligned}
\end{equation}
In particular \eqref{generalHardy} holds when $M$ is a Cartan-Hadamard manifold and condition \eqref{condition2} holds with $\psi$ a convex function satisfying  $\psi(0)=\psi''(0)=0$, $\psi'(0)=1$.

Assumption \eqref{condition3} is not required if $M$ coincides with the Riemannian model with pole $o$ defined by $\psi$ (see Section \ref{proof1}).

Finally, \eqref{generalHardy} is sharp in the following sense: given any  non negative function $\psi$ s.t. $\psi(0)=\psi''(0)=0$, $\psi'(0)=1$, the operator
\begin{equation}\label{crit}
-\Delta_{M} -  \frac{(N-1)}{4}  \left[ 2\frac{\psi^{\prime \prime}}{\psi}+ (N-3)\frac{(\psi^{\prime 2}-1)}{\psi^2}\right]- \frac{1}{4r^2}-\frac{(N-1)(N-3)}{4\psi^2}
\end{equation}
is critical on the Riemannian model corresponding to $\psi$, on which of course the curvature condition \eqref{condition2} holds as an equality.
\end{thm}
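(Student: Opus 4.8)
The plan is to construct, on the Riemannian model $M_\psi$ attached to $\psi$, an explicit positive solution of the relevant shifted equation, transplant it to $M$ as a positive supersolution by comparison geometry, deduce \eqref{generalHardy} through the ground‑state substitution, and finally establish criticality on $M_\psi$ by exhibiting a null sequence. Throughout I set
\[
\phi(r):=\psi(r)^{-\frac{N-1}{2}}r^{\frac12},\qquad V:=\frac{N-1}{4}\Big(2\tfrac{\psi''}{\psi}+(N-3)\tfrac{(\psi')^2-1}{\psi^2}\Big)+\frac1{4r^2}+\frac{(N-1)(N-3)}{4\psi^2},
\]
so that \eqref{generalHardy} reads exactly $\int_M|\nabla_Mu|^2\,dv_M\ge\int_MV u^2\,dv_M$.

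\textbf{Step 1 (the model).} On $M_\psi=\big((0,\infty)\times\mathbb S^{N-1},\,dr^2+\psi(r)^2d\omega^2\big)$ a radial function satisfies $\Delta_{M_\psi}f=f''+(N-1)\tfrac{\psi'}{\psi}f'$, and a direct computation using $\phi'/\phi=\tfrac1{2r}-\tfrac{N-1}{2}\tfrac{\psi'}{\psi}$ gives $-\Delta_{M_\psi}\phi=V\phi$ on $M_\psi\setminus\{o\}$. Equivalently, the Liouville substitution $u=\psi^{-(N-1)/2}w$ turns, for radial $u$, the left‑hand side of \eqref{generalHardy} into $\omega_{N-1}\big(\int_0^\infty(w')^2dr+\tfrac{(N-1)(N-3)}{4}\int_0^\infty w^2/\psi^2dr\big)$ and the right‑hand side into $\omega_{N-1}\big(\tfrac14\int_0^\infty w^2/r^2dr+\tfrac{(N-1)(N-3)}{4}\int_0^\infty w^2/\psi^2dr\big)$, reducing \eqref{generalHardy} to the one‑dimensional Hardy inequality $\int_0^\infty(w')^2dr\ge\tfrac14\int_0^\infty w^2/r^2dr$; a spherical‑harmonics decomposition handles non‑radial $u$, since the angular eigenvalues $\lambda_k\ge0$ contribute only nonnegative terms. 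This proves \eqref{generalHardy} on $M_\psi$ and uses neither \eqref{condition3} nor convexity of $\psi$, because on the model $\phi$ is an exact solution and the substitution needs no sign information — this is why \eqref{condition3} can be dropped there.

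\textbf{Step 2 (general $M$).} By \eqref{condition1} the distance $r=\varrho(\cdot,o)$ is smooth on $M\setminus\{o\}$, so $\Delta_M\phi=\phi''+(\Delta_Mr)\phi'$. Since $M_\psi$ has radial sectional curvature $-\psi''/\psi$, the bound \eqref{condition2} and the Hessian/Laplacian comparison theorem give $\Delta_Mr\ge(N-1)\psi'/\psi$. On the other hand \eqref{condition3} forces $\phi'\le0$: with $g(r):=(N-1)r\psi'-\psi$ one has $g(0)=0$ and $g'=(N-2)\psi'+(N-1)r\psi''\ge0$, hence $g\ge0$, i.e. $\phi'/\phi=\tfrac1{2r}-\tfrac{N-1}{2}\tfrac{\psi'}{\psi}\le0$. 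Therefore $\Delta_M\phi\le\phi''+(N-1)\tfrac{\psi'}{\psi}\phi'=-V\phi$, so $\phi$ is a positive supersolution of $-\Delta_M-V$ on $M\setminus\{o\}$. The standard identity $\int_M|\nabla_Mu|^2=\int_M\tfrac{-\Delta_M\phi}{\phi}u^2+\int_M\phi^2|\nabla_M(u/\phi)|^2$ for $u\in C_c^\infty(M\setminus\{o\})$ then yields $\int_M|\nabla_Mu|^2\ge\int_MVu^2$, i.e. \eqref{generalHardy}, for such $u$; since $N\ge3$ the pole carries zero capacity for this inequality (cutting off on $B_{2\varepsilon}\setminus B_\varepsilon$ with $|\nabla\chi_\varepsilon|\sim\varepsilon^{-1}$ costs $O(\varepsilon^{N-2})$ in energy, while the right‑hand side stays finite because $\int_{B_\varepsilon}r^{N-3}dr<\infty$), so the inequality passes to all $u\in C_c^\infty(M)$. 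On a Cartan–Hadamard manifold with $\psi$ convex, \eqref{condition3} holds automatically (all three summands are nonnegative) and there is a pole with empty cut locus, so \eqref{generalHardy} applies.

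\textbf{Step 3 (criticality; main obstacle).} On $M_\psi$ the operator $L:=-\Delta_{M_\psi}-V$ from \eqref{crit} is nonnegative (Step 1) and $L\phi=0$ with $\phi>0$; by the criticality theory of \cite{pinch} it suffices to show $\phi$ is a ground state, i.e. to find $v_n$ compactly supported in $M_\psi\setminus\{o\}$ with $v_n\to1$ locally uniformly and $\int_{M_\psi}\phi^2|\nabla_{M_\psi}v_n|^2dv_{M_\psi}\to0$. The decisive simplification is $\phi^2\,dv_{M_\psi}=r\,dr\,d\omega$, so for radial $v_n$ this weighted energy equals $\omega_{N-1}\int_0^\infty r(v_n')^2dr$ — the radial Dirichlet energy of the Euclidean plane — and the classical doubly logarithmic cutoff ($v_n\equiv1$ on $[1/n,n]$, interpolated logarithmically on $[1/n^2,1/n]$ and $[n,n^2]$, zero outside) has energy $O(1/\log n)\to0$, with supports $\{r\le n^2\}$ compact by \eqref{condition1}. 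Thus $L$ is critical on $M_\psi$, which by \cite[Definition 2.1]{pinch} is precisely the asserted sharpness, and on $M_\psi$ the curvature condition \eqref{condition2} holds with equality. I expect the conceptual obstacle to be Step 2: the point is to keep the model solution $\phi$ a supersolution on $M$, which demands that the comparison inequality $\Delta_Mr\ge(N-1)\psi'/\psi$ (from \eqref{condition2}) and the monotonicity $\phi'\le0$ (from \eqref{condition3}) combine with compatible signs, and assumption \eqref{condition3} is exactly engineered for this. The remaining work is routine: the algebra of Step 1 pinning down $V$, the behaviour at the pole (where $N\ge3$ enters), and casting the null sequence of Step 3 in the weighted functional setting of \cite{pinch}.
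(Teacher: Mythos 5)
Your proof is correct, and its central step coincides with the paper's: you use the same supersolution $\phi=r^{1/2}\psi^{-(N-1)/2}$ (the paper's $\tilde\Phi=(r/\psi)^{(N-1)/2}r^{(2-N)/2}$), the same Laplacian comparison $\Delta_M r\ge(N-1)\psi'/\psi$ under \eqref{condition1}--\eqref{condition2} (Lemma \ref{comp}), and the same monotonicity $\phi'\le0$ derived from \eqref{condition3} (Lemma \ref{simplelemma}), combined via the ground-state/supersolution substitution. Where you genuinely diverge is in the two auxiliary parts. For the model inequality you reduce to the one-dimensional Hardy inequality by the Liouville substitution $u=\psi^{-(N-1)/2}w$ plus spherical harmonics, whereas the paper simply observes that $\phi$ solves $-\Delta_{M_\psi}\phi=V\phi$ exactly (Proposition \ref{firstprop} with $f=r^{(2-N)/2}$) and invokes the Allegretto--Piepenbrink-type result; both are fine, and your computation makes transparent why \eqref{condition3} is not needed on the model. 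For criticality the paper factors the radial equation through the Euler equation, exhibits the two solutions $u_\pm=r^{(2-N)/2}$, $r^{(2-N)/2}\log(r^{2-N})$ times $\Phi$, and applies the minimal-growth criterion of \cite[Proposition 6.1]{pinch}; you instead build an explicit null sequence using the pleasant identity $\phi^2\,dv_{M_\psi}=r\,dr\,d\omega$ and the two-dimensional logarithmic cutoff. Your route is more self-contained and quantitative (it shows directly that the Hardy constant $1/4$ at both ends of the half-line is what makes the weighted energy parabolic), while the paper's route is shorter and reuses the machinery already set up for Theorem \ref{poincare}; the two characterizations of criticality (existence of a null sequence versus the ground state being of minimal growth) are equivalent in the framework of \cite{pinch}, so nothing is lost either way. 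The only cosmetic slip is the phrase ``all three summands'' in the Cartan--Hadamard remark, where \eqref{condition3} has two summands, both nonnegative when $\psi$ is increasing and convex.
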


Of course we recover our first result when we consider the model manifold corresponding to $\psi=\sinh r$, which is well-known to coincide with the hyperbolic space.

We also comment that the quantities appearing in the second integral in the l.h.s. of \eqref{generalHardy} have a geometrical meaning: in fact,
\[
  K_{\pi,r}^{rad} = - \frac{\psi^{\prime \prime}}{\psi}\quad \text{and}  \quad H_{\pi,r}^{tan} = - \frac{(\psi^{\prime})^2 - 1}{\psi^{2}}
 \]
where $K_{\pi,r}^{rad}$ (resp. $H_{\pi,r}^{tan}$) denote sectional curvature relative to planes containing (resp. orthogonal to) the radial direction in the Riemannian model associated to $\psi$, see Section \ref{proof1} for some further detail.

\begin{ex} The weight in the second term in the l.h.s. of \eqref{generalHardy} can be unbounded. Consider e.g. a Riemannian model associated to a function $\psi$ satisfying $\psi(r)\sim e^{r^a}$ as $r\to+\infty$, where $a>1$. Then sectional curvatures are unbounded below and one has:
\[
-  \frac{(N-1)}{4}  \left[ 2\frac{\psi^{\prime \prime}}{\psi}+ (N-3)\frac{(\psi^{\prime 2}-1)}{\psi^2}\right]\sim -\frac{(N-1)^2a^2}4 r^{2a-2}\ \textrm{as}\ r\to+\infty.
\]
Hence
\[
 \int_{M} |\nabla_{M} u|^2 -  \int_{M} w\, u^2 \geq \frac{1}{4} \int_{M} \frac{u^2}{r^2}+\frac{(N-1)(N-3)}{4}\int_{M} \frac{u^2}{\psi^2}.
\]
where \[
w(r)\sim \frac{(N-1)^2a^2}4 r^{2a-2}\ \textrm{as}\ r\to+\infty.
\]
\end{ex}

Some complementary results can be given on bounded domains in $\hn$. When restricted to a bounded domain, the operator $P$ defined in Remark \ref{remarkoptimal} is clearly not critical anymore; in Proposition \ref{infinity} we show that our methods immediately provide an infinite expansion of logarithmic weight that can be added to \eqref{poincareeq} when posed on geodesic balls. Before giving a precise statement, we first introduce some auxiliary functions, which are basically the iterated log functions arising in several paper in the euclidean setting, see for instance \cite{FT}. Let $X_{1}(t) = (1 - \log t)^{-1}$ for $t \in (0, 1].$ We define recursively the functions:
\[
X_{k}(t) = X_{1}(X_{k-1}(t)), \ \quad k = 2, 3 \ldots
\]
The $X_{k}$ are well defined and that for $k = 1,2 \ldots$ one has
\[
X_{k}(0) = 0, \quad \quad X_{k} (1) = 1, \quad \quad 0 < X_{k}(t) < 1, \quad \ \mbox{for} \ t \in (0,1).
\]

We denote as before $r := \rho(x, o)$ and we prove

\begin{prop}\label{infinity}
Let $B:= B(o,1) \subset \hn$ be a geodesic ball of radius 1 and  $N \geq 3.$ Then for every $u \in C_{c}^{\infty}(B)$ there holds

\begin{align}\label{infinityi}
\int_{B} |\nabla_{\hn} u|^2 \ dv_{\hn} - \frac{(N-1)^2}{4} \int_{B} u^2 \ dv_{\hn} &
\geq \frac{1}{4} \int_{B} \frac{u^2}{r^2} \ dv_{\hn} + \frac{(N-1)(N-3)}{4} \int_{B} \frac{u^2}{\sinh^2 r} \ dv_{\hn} \notag \\
& + \frac{1}{4} \sum_{i = 1}^{\infty} \int_{B} \frac{u^2}{r^2} X_{1}^2(r)X_{2}^2(r) \ldots X_{i}^2(r)  \ dv_{\hn}.
\end{align}

Moreover, for each $k = 1,2 \ldots $ the latter constant is the best constant for the corresponding $k-$ improved inequality, that is
\[
\frac{1}{4} = \inf_{u \in C_{c}^{\infty}(B)} \frac{\langle Pu,u \rangle
- \frac{1}{4} \sum_{i = 1}^{k-1} \int_{B} \frac{1}{r^2} X_{1}^2(r)X_{2}^2(r) \ldots X_{i}^2(r) u^2 \ dv_{\hn} }{\int_{B} \frac{1}{r^2} X_{1}^2 X_{2}^2 \ldots X_{k}^2 u^2 \ dv_{\hn}},
\]
where $\langle Pu, u \rangle := \int_{B} |\nabla_{\hn} u|^2 \ dv_{\hn} - \frac{(N-1)^2}{4} \int_{B} u^2 \ dv_{\hn} -
\frac{1}{4} \int_{B} \frac{u^2}{r^2} \ dv_{\hn} - \frac{(N-1)(N-3)}{4} \int_{B} \frac{u^2}{\sinh^2 r} \ dv_{\hn}.$
\end{prop}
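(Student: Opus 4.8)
The plan is to adapt to the ball $B=B(o,1)$ the supersolution (ground state) argument that already proves Theorem \ref{poincare}, the only genuinely new ingredient being the choice of a supersolution carrying the iterated logarithmic factors. For each $k\in\mathbb N$ I would look for a positive function $\phi_k$ on $B\setminus\{o\}$ with $-\De_{\hn}\phi_k=W_k\,\phi_k$, where
\[
W_k:=\frac{(N-1)^2}{4}+\frac{1}{4r^2}+\frac{(N-1)(N-3)}{4\sinh^2 r}+\frac{1}{4r^2}\sum_{i=1}^{k}X_1^2(r)\cdots X_i^2(r).
\]
Given such a $\phi_k$, the substitution $u=\phi_k v$ and an integration by parts yield, for $u\in C_c^\infty(B\setminus\{o\})$,
\[
\int_B|\nabla_{\hn}u|^2\,dv_{\hn}-\int_B W_k\,u^2\,dv_{\hn}=\int_B\phi_k^2\,|\nabla_{\hn}v|^2\,dv_{\hn}\ge 0,
\]
which is the $k$-term version of \eqref{infinityi}. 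Letting $k\to\infty$ and invoking monotone convergence (all added terms being nonnegative) then gives \eqref{infinityi}; no separate discussion of the convergence of the series is needed, since its partial sums are controlled by four times the finite left-hand side.

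To construct $\phi_k$ I would reuse the reduction already exploited for Theorem \ref{poincare}: for radial functions, conjugating $-\De_{\hn}$ by $(\sinh r)^{(N-1)/2}$ produces the operator $-\dfrac{d^2}{dr^2}+\dfrac{(N-1)^2}{4}+\dfrac{(N-1)(N-3)}{4\sinh^2 r}$, so that $\phi_k=(\sinh r)^{-(N-1)/2}\chi_k$ solves $-\De_{\hn}\phi_k=W_k\phi_k$ if and only if $\chi_k>0$ solves on $(0,1)$
\[
-\chi_k''=\frac{1}{4r^2}\Bigl(1+\sum_{i=1}^{k}X_1^2(r)\cdots X_i^2(r)\Bigr)\chi_k.
\]
The solution is the iterated-logarithm function familiar from the Euclidean theory (cf.\ \cite{FT}), namely $\chi_k(r)=\sqrt{r}\,\bigl(X_1(r)\cdots X_k(r)\bigr)^{-1/2}$. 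I would check this directly: writing $P_i:=X_1\cdots X_i$ one has the elementary identities $X_i'/X_i=P_i/r$, and with $\chi_k=\sqrt{r}\,\eta$ the equation becomes $-\eta'/(r\eta)-\eta''/\eta=\frac{1}{4r^2}\sum_i P_i^2$; the ansatz $\eta'/\eta=-\frac{1}{2r}\sum_i P_i$, i.e.\ $\eta=(X_1\cdots X_k)^{-1/2}$, satisfies it, the coefficient $-1/2$ being exactly what makes the off-diagonal contributions $\sum_{j<i}P_iP_j$ cancel.

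There then remains the passage from $C_c^\infty(B\setminus\{o\})$ to $C_c^\infty(B)$ in the $k$-term inequality: since $N\ge3$ the potential $W_k\sim 1/4r^2$ is integrable against $dv_{\hn}$ near $o$, and $\{o\}$ has zero $H^1$-capacity, so truncating with a radial logarithmic cutoff around $o$, whose energy error is $O(\varepsilon^{N-2})$, closes the gap. For the optimality, the bound ``$\ge 1/4$'' is just the $k$-term inequality, so one needs a minimizing sequence: I would test the quotient with truncations $u_n=\phi_k\zeta_n$ of the critical function, $\zeta_n$ radial logarithmic cutoffs increasing to $1$. Using $-\De_{\hn}\phi_k=W_k\phi_k$ one gets
\[
\langle Pu_n,u_n\rangle-\tfrac14\sum_{i=1}^{k-1}\int_B\tfrac{P_i^2}{r^2}u_n^2\,dv_{\hn}=\tfrac14\int_B\tfrac{P_k^2}{r^2}u_n^2\,dv_{\hn}+\int_B\phi_k^2|\nabla_{\hn}\zeta_n|^2\,dv_{\hn},
\]
so the quotient equals $\tfrac14+\bigl(\int_B\phi_k^2|\nabla_{\hn}\zeta_n|^2\bigr)\big/\bigl(\int_B\tfrac{P_k^2}{r^2}u_n^2\bigr)$; since $\phi_k^2(\sinh r)^{N-1}=r/P_k$ and $\int_\varepsilon^\delta \tfrac{P_k}{r}\,dr=\log\tfrac{X_k(\delta)}{X_k(\varepsilon)}$, the numerator of the correction can be made to vanish (a one-dimensional capacity estimate) while the denominator diverges as $\varepsilon\to0$, so the quotient tends to $1/4$. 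I expect this last estimate, equivalently the sharpness of the one-dimensional iterated-logarithm Hardy inequality on $(0,1)$ together with the capacity computation near $o$, to be the main, though essentially routine, technical point; everything else is a direct transcription of the proof of Theorem \ref{poincare}.
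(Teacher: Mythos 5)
Your proposal is correct and follows essentially the same route as the paper: your supersolution $\phi_k=(\sinh r)^{-(N-1)/2}\sqrt{r}\,\bigl(X_1(r)\cdots X_k(r)\bigr)^{-1/2}$ coincides with the paper's $\tilde\Phi_k=\Phi f_k$, and the ground-state substitution $u=\phi_k v$ together with the telescoping identity for the successive quotients is exactly Steps 1--3 of the paper's Section 8. The only (harmless) deviations are that you verify the iterated-logarithm ODE by direct computation instead of quoting \cite{FT}, and that in the optimality step you build the minimizing sequence from truncations $\phi_k\zeta_n$ of the exact solution via a one-dimensional capacity estimate, whereas the paper tests with the family $r^{\epsilon}X_1^{a_1}\cdots X_k^{a_k}\psi(r)$ borrowed from \cite{FT}; both choices work.
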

\par\medskip\par

 \section{Poincar\'{e}-Rellich Inequalities}\label{Rellich}

In this section we state our Poincar\'{e}-Rellich inequality on the hyperbolic space and related Euclidean inequalities. First we have
\begin{thm}\label{TPR}
Let $ N \geq 5$. For all $ u \in C^{\infty}_{c}(\mathbb{H}^{N})$ there holds

\begin{equation}
\begin{aligned}\label{PR}
 \int_{\hn} |\Delta_{\hn} u|^2 \ dv_{\hn} - \frac{(N-1)^4}{16} \int_{\hn} u^2 \ dv_{\hn}
&\geq \frac{(N-1)^2}{8} \int_{\hn} \frac{u^2}{r^2} \ dv_{\hn}+\frac{9}{16} \int_{\hn} \frac{u^2}{r^4} \ dv_{\hn}\\ &
+\frac{(N^2-1)(N-3)^2}{8} \int_{\hn} \frac{u^2}{\sinh^2 r} \  dv_{\hn}\\ &+\frac{(N-1)(N-3)(N^2-4N-3)}{16} \int_{\hn} \frac{u^2}{\sinh^4 r} \ dv_{\hn}
\end{aligned}
\end{equation}
The constant $\frac{(N-1)^4}{16}$  is of course sharp in the sense that the l.h.s. of \eqref{PR} can be negative if such constant is replaced by a larger one.
Furthermore, the constant $\frac{(N-1)^2}{8}$ appearing in \eqref{PR} is sharp in the sense that no inequality of the form
\[
\int_{\hn} |\Delta_{\hn} u|^2 \ dv_{\hn} - \frac{(N-1)^4}{16} \int_{\hn} u^2 \ dv_{\hn}
\geq c\, \int_{\hn} \frac{u^2}{r^2} \ dv_{\hn}
\]
holds for all $ u \in C^{\infty}_{c}(\mathbb{H}^{N})$ when $c>\frac{(N-1)^2}{8}$.
\end{thm}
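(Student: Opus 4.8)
The plan is to prove \eqref{PR} by an orthogonal decomposition into spherical harmonics, in the spirit of \cite{MSS}, and to treat the optimality statements separately. Fix geodesic polar coordinates at $o$, so that $\Delta_{\hn}=\partial_r^2+(N-1)\coth r\,\partial_r+\sinh^{-2}r\,\Delta_{\mathbb S^{N-1}}$ and $dv_{\hn}=(\sinh r)^{N-1}dr\,d\omega$, and expand $u\in C_c^\infty(\hn)$ as $u=\sum_{k\ge 0}a_k(r)Y_k(\omega)$, where $-\Delta_{\mathbb S^{N-1}}Y_k=\lambda_k Y_k$, $\lambda_k=k(k+N-2)$. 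Since the $Y_k$ are $L^2(\mathbb S^{N-1})$-orthogonal, each of the four integrals on the right of \eqref{PR}, as well as $\int_{\hn}|\Delta_{\hn}u|^2\,dv_{\hn}$ and $\int_{\hn}u^2\,dv_{\hn}$, splits into a sum over $k$ of one-dimensional integrals for the profile $a_k$ against the weight $(\sinh r)^{N-1}$, with $\Delta_{\hn}$ replaced on the $k$-th mode by $\mathcal L_k a=a''+(N-1)\coth r\,a'-\lambda_k\sinh^{-2}r\,a$. As the right-hand side of \eqref{PR} does not involve $\lambda_k$, it suffices to prove the resulting scalar inequality for $k=0$ and then check that the modes $k\ge1$ only increase the left-hand side.

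For the scalar problem I would use the ground-state substitution $a(r)=(\sinh r)^{-(N-1)/2}b(r)$, $b\in C_c^\infty(0,\infty)$, which is an isometry of the relevant weighted $L^2$-spaces and, using $\coth^2 r=1+\sinh^{-2}r$, turns $(\sinh r)^{(N-1)/2}\mathcal L_k a$ into $b''-V_k b$, where
\[
V_k(r)=\Lambda+U_k(r),\qquad \Lambda:=\frac{(N-1)^2}{4},\qquad U_k(r):=\frac{(N-1)(N-3)+4\lambda_k}{4\sinh^2 r}.
\]
Expanding the square and integrating by parts twice (all boundary terms vanish since $b\in C_c^\infty(0,\infty)$),
\begin{align*}
\int_0^\infty|b''-V_kb|^2\,dr-\Lambda^2\int_0^\infty b^2\,dr&=\int_0^\infty(b'')^2+2\Lambda\int_0^\infty(b')^2+2\int_0^\infty U_k(b')^2\\
&\quad-\int_0^\infty U_k''\,b^2+2\Lambda\int_0^\infty U_k b^2+\int_0^\infty U_k^2 b^2,
\end{align*}
so the matter is reduced to a one-dimensional Hardy-type inequality. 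The mode $k=0$ is the crucial one: here $2\Lambda\int(b')^2\ge\tfrac{(N-1)^2}{8}\int b^2/r^2$ by the one-dimensional Hardy inequality, $\int(b'')^2\ge\tfrac14\int(b')^2/r^2\ge\tfrac9{16}\int b^2/r^4$ by iterating it (using the weighted Hardy inequality $\int r^{-2}(f')^2\ge\tfrac94\int r^{-4}f^2$), and the positive term $2\int U_0(b')^2=\tfrac{(N-1)(N-3)}{2}\int(b')^2/\sinh^2 r$, combined with the identity $(\sinh^{-2}r)''=6\sinh^{-4}r+4\sinh^{-2}r$ and a weighted Hardy inequality with weight $\sinh^{-2}r$, is arranged to produce, after collecting coefficients of $\int b^2/\sinh^2 r$ and $\int b^2/\sinh^4 r$, exactly $\tfrac{(N^2-1)(N-3)^2}{8}$ and $\tfrac{(N-1)(N-3)(N^2-4N-3)}{16}$. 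The hypothesis $N\ge5$ is precisely what makes these last coefficients, and the non-negative quantities discarded along the way, non-negative.

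To handle $k\ge1$, set $\widetilde W:=U_k-U_0=\lambda_k\sinh^{-2}r\ge0$; integrating by parts,
\[
\int_0^\infty|b''-V_kb|^2\,dr-\int_0^\infty|b''-V_0b|^2\,dr=2\int_0^\infty\widetilde W(b')^2+\int_0^\infty\Big(\widetilde W(2\Lambda+U_k+U_0)-\widetilde W''\Big)b^2,
\]
and, using $\widetilde W''=\lambda_k(6\sinh^{-4}r+4\sinh^{-2}r)$ and $\lambda_k\ge\lambda_1=N-1$, one checks that $\widetilde W(2\Lambda+U_k+U_0)-\widetilde W''\ge0$ pointwise whenever $N\ge5$; hence the $k$-th mode inequality follows from the $k=0$ one, and summing over $k$ and undoing the substitution gives \eqref{PR}. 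The sharpness of $(N-1)^4/16$ is immediate: replacing it by a larger constant makes the left-hand side of \eqref{PR} negative along a minimizing sequence for the Poincaré quotient $\inf\int|\Delta_{\hn}u|^2/\int u^2$.

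For the optimality of $\tfrac{(N-1)^2}{8}$ I would argue by contradiction. If the stated inequality held with some $c>\tfrac{(N-1)^2}{8}$, then, restricting to radial $u=(\sinh r)^{-(N-1)/2}b$ and using the expansion above, $\int_0^\infty(b'')^2+2\Lambda\int_0^\infty(b')^2+(\text{terms with }U_0)\ge c\int_0^\infty b^2/r^2$ for all $b\in C_c^\infty(0,\infty)$. I would then test against profiles concentrating at infinity, e.g.\ $b_n(r)=r^{1/2}$ on $[n,n^2]$, smoothly cut off on $[n/2,n]$ and on $[n^2,2n^2]$, the point being that $r^{1/2}$ is the extremal profile of the one-dimensional Hardy inequality: then $\int_0^\infty b_n^2/r^2\sim\log n\to\infty$, $\int_0^\infty(b_n')^2=\tfrac14\log n+O(1)$, $\int_0^\infty(b_n'')^2=O(1)$, and every term involving the exponentially decaying potential $U_0$ stays bounded. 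Hence the Rayleigh quotient tends to $2\Lambda\cdot\tfrac14=\tfrac{(N-1)^2}{8}<c$, a contradiction. The main obstacle is the bookkeeping in the $k=0$ step: one must check that the standard and weighted one-dimensional Hardy inequalities, together with the derivative identity for $\sinh^{-2}r$, recombine to give exactly the four constants in \eqref{PR} — with no slack left in the $r^{-2}$-term, so that the last test-function argument really is sharp — and that all discarded quantities, as well as the $k\ge1$ correction, have the right sign, which is where $N\ge5$ enters.
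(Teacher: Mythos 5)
Your proof of inequality \eqref{PR} itself is essentially the one in the paper: the spherical harmonic decomposition, the ground-state substitution $b=(\sinh r)^{(N-1)/2}a$, the one-dimensional Rellich inequality $\int_0^\infty (b'')^2\,dr\ge\frac{9}{16}\int_0^\infty b^2r^{-4}\,dr$, the one-dimensional Hardy inequality applied to $2\Lambda\int (b')^2$, and the weighted inequality
\[
\int_0^\infty \frac{(f')^2}{\sinh^2 r}\,dr\ \ge\ \frac94\int_0^\infty \frac{f^2}{\sinh^4 r}\,dr+\int_0^\infty \frac{f^2}{\sinh^2 r}\,dr
\]
(the paper's Lemma \ref{hardytype}, proved there via the substitution $f=w\sinh r$) are exactly the ingredients used in Section \ref{rellich-proof}. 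Your $k=0$ bookkeeping does close: with $\mu=(N-1)(N-3)/4$ the coefficients of $\int b^2\sinh^{-4}r$ and of $\int b^2\sinh^{-2}r$ come out as $\mu^2-\tfrac32\mu$ and $\mu\bigl(\tfrac{(N-1)^2}{2}-2\bigr)$, which are precisely the stated constants, and your monotonicity-in-$k$ step (positivity of $\widetilde W(2\Lambda+U_k+U_0)-\widetilde W''$ for $N\ge5$) is a cleaner packaging of the paper's explicit minimization of the coefficients $A_n$, $B_n$ over $n$. The genuine divergence is in the optimality of $(N-1)^2/8$: the paper changes variables via $ds/s^{N-1}=dr/(\sinh r)^{N-1}$, computes asymptotics of the resulting weight $\rho$, and runs a scaling argument $w(\sigma)=z(t\sigma)$, $t\to0$, following Caldiroli--Musina, whereas you test the radial one-dimensional form directly with $b_n\sim r^{1/2}$ on $[n,n^2]$, suitably cut off, so that $\int b_n^2/r^2\sim\log n$, $\int(b_n')^2\sim\tfrac14\log n$, $\int(b_n'')^2=O(1)$, and all $\sinh$-weighted terms vanish; this forces $c\le 2\Lambda\cdot\tfrac14=(N-1)^2/8$ and is more elementary and self-contained (it works precisely because the $r^{-2}$ term in your derivation carries no slack). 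One caveat, shared with the paper: the radial profiles of a general $u\in C^\infty_c(\hn)$ do not lie in $C^\infty_c(0,\infty)$ --- after the substitution they behave like $r^{(N-1)/2+n}$ at the origin --- so the one-dimensional Hardy and Rellich inequalities must be extended by an approximation argument; this is routine for $N\ge5$ but should be stated.
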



\begin{rem}[Joint sharpness of some of the constants]\label{rellichsharpness}
The multiplicative constants appearing in two of the terms in the r.h.s. of \eqref{PR}, namely:
\[
\frac{9}{16} \int_{\hn} \frac{u^2}{r^4} \ dv_{\hn}+\frac{(N-1)(N-3)(N^2-4N-3)}{16} \int_{\hn} \frac{u^2}{\sinh^4 r} \ dv_{\hn}
\]
are \it jointly sharp\rm. By this we mean that the inequality
\begin{equation*}
\begin{aligned}
 \int_{\hn} &|\Delta_{\hn} u|^2 \ dv_{\hn} - \frac{(N-1)^4}{16} \int_{\hn} u^2 \ dv_{\hn}\ge c_1 \int_{\hn} \frac{u^2}{r^4} \ dv_{\hn}
+c_2 \int_{\hn} \frac{u^2}{\sinh^4 r} \ dv_{\hn}
\end{aligned}
\end{equation*}
cannot hold for all $ u \in C^{\infty}_{c}(\mathbb{H}^{N})$, or even for for all $ u \in C^{\infty}_{c}(B_\varepsilon)$ given any $\varepsilon>0$, if
\[\begin{aligned}
&c_1=\frac9{16},\ \ \ c_2>\frac{(N-1)(N-3)(N^2-4N-3)}{16}\ \ \ \textrm{or}\\
&c_1>\frac9{16},\ \ \ c_2=\frac{(N-1)(N-3)(N^2-4N-3)}{16}.
\end{aligned}
\]
This is a consequence of the following elementary facts:
\[
\frac9{16}+\frac{(N-1)(N-3)(N^2-4N-3)}{16}=\frac{N^2(N-4)^2}{16}
\]
and the r.h.s. is the known best constant for the standard $N$ dimensional Euclidean Rellich inequality, both on the whole ${\mathbb R}^N$ or in any open set containing the origin. The claim follows by noticing that $\sinh r\sim r$ as $r\to0$.

We refer to Remark \ref{conj} for a discussion of the possible sharpness of the constant 9/16 found here. Clearly, should this value be sharp, sharpness of the constant $(N-1)(N-3)(N^2-4N-3)/16$ in an obvious sense would then follow as well by the above discussion.
\end{rem}

\par
  We give a sample of the several Euclidean inequalities which can possibly be deduced from Theorem \ref{TPR}. We consider e.g. the half space model for $\hn$ exploiting the transformations 

\begin{equation}\label{higherorder}
  v(x,y):= y^{-\alpha} u(x,y), \ \ x \in \mathbb{R}^{N-1}, y \in \mathbb{R}^{+},
\end{equation}
with $\alpha=(N-4)/2$ or $\alpha=(N-2)/2$
from \eqref{PR} we derive the following statement.

\begin{cor}\label{cor2}
Let $N \geq 5.$ For all  $v \in C_c^{\infty}(\mathbb{R}^{N}_{+})$ the following inequalities hold

\begin{align}\label{HALFrellich1}
& \int_{\mathbb{R}^{+}} \int_{\mathbb{R}^{N-1}} \left( y^2 (\Delta v)^2 + \frac{N(N-2)}{2} |\nabla v|^2 \right) \ dx \ dy
 \geq \frac{2N^2 - 4N +1}{16} \int_{\mathbb{R}^{+}} \int_{\mathbb{R}^{N-1}} \frac{v^2}{y^2} \ dx \ dy \notag \\
& + \frac{(N-1)^2}{8} \int_{\mathbb{R}^{+}} \int_{\mathbb{R}^{N-1}}
\frac{v^2}{y^2 d^2} \ dx \ dy + \frac{9}{16} \int_{\mathbb{R}^{+}} \int_{\mathbb{R}^{N-1}} \frac{v^2}{y^2 d^4} \ dx \ dy
\end{align}
\emph{and}

\begin{align}\label{HALFrellich2}
& \int_{\mathbb{R}^{+}} \int_{\mathbb{R}^{N-1}} \left( (\Delta v)^2 + \frac{(N^2 - 2N - 4)}{2} \frac{|\nabla v|^2}{y^2} \right) \ dx \ dy
 \geq \frac{9}{16} (2N^2 - 4N - 7) \int_{\mathbb{R}^{+}} \int_{\mathbb{R}^{N-1}} \frac{v^2}{y^4} \ dx \ dy \notag \\
& + \frac{(N-1)^2}{8} \int_{\mathbb{R}^{+}} \int_{\mathbb{R}^{N-1}}
\frac{v^2}{y^4 d^2} \ dx \ dy + \frac{9}{16} \int_{\mathbb{R}^{+}} \int_{\mathbb{R}^{N-1}} \frac{v^2}{y^4 d^4} \ dx \ dy.
\end{align}
Furthermore, the constants in \eqref{HALFrellich1} satisfy the following optimality properties: \par \smallskip\par
$\bullet$ no inequality of the form
$$\int_{\mathbb{R}^{+}} \int_{\mathbb{R}^{N-1}} \left( y^2 (\Delta v)^2 +
c |\nabla v|^2 \right) \ dx \ dy
 \geq \frac{2N^2 - 4N +1}{16} \int_{\mathbb{R}^{+}} \int_{\mathbb{R}^{N-1}} \frac{v^2}{y^2} \ dx \ dy $$
holds for all $ u \in C^{\infty}_{c}(\mathbb{H}^{N})$ when $c< \frac{N(N-2)}{2}$;
 \par \smallskip\par
$\bullet$ no inequality of the form
$$\int_{\mathbb{R}^{+}} \int_{\mathbb{R}^{N-1}} \left( y^2 (\Delta v)^2 + \frac{N(N-2)}{2} |\nabla v|^2 \right) \ dx \ dy
 \geq c \int_{\mathbb{R}^{+}} \int_{\mathbb{R}^{N-1}} \frac{v^2}{y^2} \ dx \ dy $$
holds for all $ u \in C^{\infty}_{c}(\mathbb{H}^{N})$ when $c>  \frac{2N^2 - 4N +1}{16} $;
 \par \smallskip\par
$\bullet$ no inequality of the form
\begin{align*}
& \int_{\mathbb{R}^{+}} \int_{\mathbb{R}^{N-1}} \left( y^2 (\Delta v)^2 + \frac{N(N-2)}{2} |\nabla v|^2 \right) \ dx \ dy
 \geq \frac{2N^2 - 4N +1}{16} \int_{\mathbb{R}^{+}} \int_{\mathbb{R}^{N-1}} \frac{v^2}{y^2} \ dx \ dy \notag \\
& + c \int_{\mathbb{R}^{+}} \int_{\mathbb{R}^{N-1}}
\frac{v^2}{y^2 d^2} \ dx \ dy
\end{align*}
holds for all $ u \in C^{\infty}_{c}(\mathbb{H}^{N})$ when $c> \frac{(N-1)^2}{8} $.
\par
Similarly, the constants $ \frac{(N^2 - 2N - 4)}{2},$ $\frac{9}{16}(2N^2 - 4N -7)$ and  $\frac{(N-1)^2}{8}$
in \eqref{HALFrellich2} are optimal in the above sense.

 \end{cor}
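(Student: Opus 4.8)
The plan is to obtain Corollary~\ref{cor2} by transplanting the hyperbolic Poincar\'e--Rellich inequality \eqref{PR} to the Euclidean half-space through the conformal substitution \eqref{higherorder}, exactly in the spirit in which Corollary~\ref{corHALF1} is derived from \eqref{poincareeq}. I would work in the upper half-space model $\mathbb{R}^N_+=\{(x,y):x\in\mathbb{R}^{N-1},\ y>0\}$ with metric $y^{-2}\delta_{ij}$; there $dv_{\hn}=y^{-N}\,dx\,dy$, the Laplace--Beltrami operator is $\Delta_{\hn}=y^2\Delta-(N-2)y\,\partial_y$ (with $\Delta$ the Euclidean Laplacian), and the geodesic distance from the pole $o=(0,1)$ equals the function $d$ of Corollary~\ref{corHALF1}, i.e.\ $r=\varrho((x,y),o)=d=\cosh^{-1}\!\big(1+\tfrac{(y-1)^2+|x|^2}{2y}\big)$.

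Fix $v\in C_c^\infty(\mathbb{R}^N_+)$ and set $u=y^{\alpha}v$, with $\alpha=(N-2)/2$ for \eqref{HALFrellich1} and $\alpha=(N-4)/2$ for \eqref{HALFrellich2}; then $u\in C_c^\infty(\mathbb{H}^N)$ and $v\mapsto u$ is a linear bijection of these function spaces. A direct computation gives
\[
\Delta_{\hn}u=y^{\alpha+2}\Delta v+(2\alpha-N+2)\,y^{\alpha+1}\partial_y v+\alpha(\alpha-N+1)\,y^{\alpha}v ,
\]
so that for $\alpha=(N-2)/2$ the first-order term vanishes, while for $\alpha=(N-4)/2$ it survives with coefficient $-2$. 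One then squares, multiplies by $y^{-N}$ and integrates; the cross terms are reduced by integration by parts over $\mathbb{R}^N_+$ (all boundary contributions vanish, the support of $v$ being a compact subset of the open half-space), using $\int(\Delta v)v=-\int|\nabla v|^2$ and, in the second case, the further identities $-\int y^{-1}(\Delta v)\partial_y v=\tfrac12\int y^{-2}|\nabla v|^2-\int y^{-2}(\partial_y v)^2$ and $-\int y^{-2}(\Delta v)v=\int y^{-2}|\nabla v|^2-3\int y^{-4}v^2$, together with elementary one-dimensional integrations by parts in $y$. The key structural point is that in the case $\alpha=(N-4)/2$ the $(\partial_y v)^2$-contributions and the ``spurious'' $v^2/y^4$-contributions produced by the mixed products cancel, so that $\int_{\hn}|\Delta_{\hn}u|^2\,dv_{\hn}$ collapses to $\int y^2(\Delta v)^2+\tfrac{N(N-2)}{2}\int|\nabla v|^2+\tfrac{N^2(N-2)^2}{16}\int y^{-2}v^2$ when $\alpha=(N-2)/2$, and to $\int(\Delta v)^2+\tfrac{N^2-2N-4}{2}\int y^{-2}|\nabla v|^2+\tfrac{(N-4)^2(N+2)^2}{16}\int y^{-4}v^2$ when $\alpha=(N-4)/2$. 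On the right-hand side nothing needs to be integrated by parts: $u^2\,dv_{\hn}=y^{2\alpha-N}v^2\,dx\,dy$ equals $y^{-2}v^2\,dx\,dy$ (resp.\ $y^{-4}v^2\,dx\,dy$), and $r=d$. Finally one discards the last two integrals on the right of \eqref{PR}, whose coefficients $\tfrac{(N^2-1)(N-3)^2}{8}$ and $\tfrac{(N-1)(N-3)(N^2-4N-3)}{16}$ are nonnegative for $N\ge5$ and whose integrands are nonnegative; moving the surviving multiple of $\int y^{-2}v^2$ (resp.\ $\int y^{-4}v^2$) to the right and invoking the numerical identities $N^2(N-2)^2-(N-1)^4=-(2N^2-4N+1)$ and $(N-4)^2(N+2)^2-(N-1)^4=-9(2N^2-4N-7)$ yields precisely \eqref{HALFrellich1} and \eqref{HALFrellich2}.

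For the optimality assertions one exploits that every step of the reduction is an identity (the integrations by parts are exact equalities), so that each ``no inequality of the form'' statement for \eqref{HALFrellich1}, transported back through $v\mapsto u=y^{(N-2)/2}v$ and using in addition the conformal identity $\int_{\hn}|\nabla_{\hn}u|^2\,dv_{\hn}=\int_{\mathbb{R}^N_+}|\nabla v|^2\,dx\,dy+\tfrac{N(N-2)}{4}\int_{\mathbb{R}^N_+}y^{-2}v^2\,dx\,dy$ (contained in Corollary~\ref{corHALF1}), becomes an equivalent statement about the two shifted forms on $\mathbb{H}^N$. Concretely: the second bullet is exactly the statement that $\inf \int_{\hn}|\Delta_{\hn}u|^2/\int_{\hn}u^2=(N-1)^4/16$ (the infimum recalled before Problem~2, never attained), i.e.\ the sharpness of $(N-1)^4/16$ in Theorem~\ref{TPR}; the third bullet is exactly the sharpness of the constant $(N-1)^2/8$ in Theorem~\ref{TPR}; and the first bullet follows by testing on a sequence $u_n$ minimizing the Rellich quotient, which by spectral calculus also satisfies $\int_{\hn}|\nabla_{\hn}u_n|^2/\int_{\hn}u_n^2\to(N-1)^2/4$, so that the coefficient of $|\nabla v|^2$ in \eqref{HALFrellich1} cannot be lowered below $N(N-2)/2$ (here one uses $(N-1)^2-N(N-2)=1$). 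The three optimality statements for \eqref{HALFrellich2} are obtained in the same way with $\alpha=(N-4)/2$ and the weight $y^{-4}v^2$ replacing $y^{-2}v^2$.

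The only genuinely delicate point I expect is the bookkeeping in the case $\alpha=(N-4)/2$: one must carry out the several integrations by parts carefully and check that the coefficients of $y^{-2}(\partial_y v)^2$ and of the additional $y^{-4}v^2$ terms generated by the mixed products cancel exactly, so that precisely $\tfrac{N^2-2N-4}{2}$ survives in front of $\int y^{-2}|\nabla v|^2$ and $-\tfrac{9}{16}(2N^2-4N-7)$ in front of $\int y^{-4}v^2$. Apart from this, the proof is a substitution, the dropping of two manifestly nonnegative terms, and the two numerical identities recorded above.
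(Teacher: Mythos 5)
Your derivation of \eqref{HALFrellich1} and \eqref{HALFrellich2} is exactly the paper's: the same substitution $u=y^{\alpha}v$ with $\alpha=(N-2)/2$, resp.\ $(N-4)/2$, the same integration‑by‑parts identities reducing $\int_{\hn}|\Delta_{\hn}u|^2\,dv_{\hn}$ to the two quadratic forms (the cancellations you flag as ``delicate'' do occur, and your intermediate identities such as $-\int y^{-2}v\,\Delta v=\int y^{-2}|\nabla v|^2-3\int y^{-4}v^2$ are correct), the same discarding of the two nonnegative $\sinh$‑terms of \eqref{PR}, and the same numerical identities $N^2(N-2)^2-(N-1)^4=-(2N^2-4N+1)$ and $(N-4)^2(N+2)^2-(N-1)^4=-9(2N^2-4N-7)$. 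The second and third optimality bullets are also handled as in the paper (they are verbatim reformulations of the sharpness of $(N-1)^4/16$ and of $(N-1)^2/8$ in Theorem \ref{TPR}). The only genuine divergence is the first bullet: the paper argues by contradiction, combining the assumed inequality with the Hardy--Maz'ya bound $\int|\nabla v|^2\ge\frac14\int v^2y^{-2}$ (resp.\ $\int y^{-2}|\nabla v|^2\ge\frac94\int y^{-4}v^2$ for \eqref{HALFrellich2}) to boost the Rellich constant above $(N-1)^4/16$; you instead exhibit a near‑extremal sequence, using spectral concentration to show that a Rellich‑minimizing sequence also saturates the Poincar\'e quotient, hence saturates Hardy--Maz'ya after the conformal identity. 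Your argument is correct (Cauchy--Schwarz on the spectral measure of $-\Delta_{\hn}$ restricted to $[\Lambda,\infty)$ gives $\int|\nabla_{\hn}u_n|^2/\int u_n^2\to\Lambda$), but note that for \eqref{HALFrellich2} it requires the modified conformal identity $\int_{\hn}|\nabla_{\hn}u|^2\,dv_{\hn}=\int y^{-2}|\nabla v|^2+\frac{(N-4)(N+2)}{4}\int y^{-4}v^2$, not the one from Corollary \ref{corHALF1}; with that adjustment the arithmetic closes ($\frac14\mapsto\frac94$ and $(N-1)^2-N(N-2)=1$ becomes $9(2N^2-4N-7)-18(N^2-2N-4)=9$). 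The paper's contradiction route is marginally more elementary since it needs no spectral theorem; yours makes the saturation mechanism explicit.
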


  \section{Proof of the Poincar\'e-Hardy inequality  \eqref{poincareeq} and of Theorem \ref{general manifold}}\label{proof1}

  We shall first state, also for later use, a result on \it Riemannian models\rm, namely an $N$-dimensional Riemannian manifold admitting a pole  $o$ and whose metric is given in spherical coordinates by
 \begin{equation}\label{meetric}
 ds^2 = dr^2 + \psi^2(r) d\omega^2,
 \end{equation}
 where $d\omega^2$ is the metric on sphere $\mathbb{S}^{N-1}$ and $\psi$ is a $C^{\infty}$ nonnegative function on $[0,\infty),$
 strictly positive on $(0, \infty)$ such that $\psi(0) = \psi^{\prime \prime}(0)= 0$ and $\psi^{\prime}(0) = 1$. The coordinate $r$ represents the Riemannian distance from the pole $o,$ see e.g. \cite{RGR, PP} for further details. It is well known that there exist an orthonormal
 frame $\{F_{j} \}_{j = 1, \ldots, N}$ on $M,$ where
 $F_{N}$ corresponds to the radial coordinate, and $F_{1}, \ldots, F_{N-1}$ to the spherical cordinates, for which
 $F_{i} \wedge F_{j}$ diagonalize the curvature operator $\mathcal{R}$ :

 \[
  \mathcal{R} (F_{i} \wedge F_{N}) = - \frac{\psi^{\prime \prime}}{\psi} F_{i} \wedge F_{N}, \ i < N,
 \]
 \[
  \mathcal{R} (F_{i} \wedge F_{j}) = - \frac{(\psi^{\prime})^2 - 1}{\psi^2} F_{i} \wedge F_{j}, \ i,j < N.
 \]

 The following quantities
 \begin{equation}\label{curvature}
  K_{\pi,r}^{rad} = - \frac{\psi^{\prime \prime}}{\psi}\quad \text{and}  \quad H_{\pi,r}^{tan} = - \frac{(\psi^{\prime})^2 - 1}{\psi^{2}}
 \end{equation}
then coincide with the sectional curvature w.r.t. planes containing the radial direction and, respectively, orthogonal to it.  

Notice that the Riemannian Laplacian of a scalar function $\Phi$ on $M$ is given by
  \begin{equation}\label{laplacian}
  \Delta_{g} \Phi (r, \theta_{1}, \ldots, \theta_{N-1})  =
 \frac{1}{\psi^2} \frac{\partial}{\partial r} \left[ (\psi(r))^{N-1} \frac{\partial \Phi}{\partial r}(r, \theta_{1},
 \ldots, \theta_{N-1}) \right] \\
  + \frac{1}{\psi^2} \Delta_{\mathbb{S}^{N-1}} \Phi(r, \theta_{1}, \ldots, \theta_{N-1}),
 \end{equation}
 where $\Delta_{\mathbb{S}^{N-1}}$ is the Riemannian Laplacian on the unit sphere $\mathbb{S}^{N-1}.$ In particular,
 for radial functions, namely functions depending only on $r$, one has

 \begin{equation}\label{radlaplacian}
 \Delta_{g} \Phi(r) = \frac{1}{(\psi(r))^{N-1}} \frac{\partial}{\partial r} \left[ (\psi(r))^{N-1} \frac{\partial \Phi}{\partial r}
 (r) \right] =  \Phi^{\prime \prime}(r) + (N-1) \frac{\psi^{\prime}(r)}{\psi(r)} \Phi^{\prime}(r),
 \end{equation}
 where from now on a prime will denote, for radial functions, derivative w.r.t $r$. Note that the quantity
 $(N-1)\frac{\psi^{\prime}(r)}{\psi(r)}$ has a geometrical meaning, namely it represents the mean curvature of the geodesic sphere
 of radius $r$ in the radial direction. 
%
%
%

We are now ready to state the following result:
\begin{prop}\label{maintheorem}

Let $ N \geq 3$ and $M$ as given in \eqref{meetric}. For all $ u \in C^{\infty}_{c}(M)$ there holds

\begin{equation*}
\begin{split}
\int_{M} |\nabla_{g} u|^2 \ dv_{g} +
\frac{(N-1)}{4} \int_{M} \left[ 2 K_{\pi,r}^{rad} + (N-3) H_{\pi,r}^{tan} \right] u^2 \ dv_{g}
\\
\geq \frac{1}{4} \int_{M} \frac{u^2}{r^2} \ dv_{g} + \frac{(N-1)(N-3)}{4} \int_{M} \frac{u^2}{\psi^2} \ dv_{g} \,.
\end{split}
 \end{equation*}

\end{prop}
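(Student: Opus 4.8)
The plan is to reduce the problem to a one-dimensional weighted Hardy inequality by means of a ground-state substitution, using an explicit positive radial function as the substitution weight. Concretely, I would look for a positive radial function $\varphi(r)$ on the model $M$ such that
\[
-\Delta_g\varphi \;=\; \left[\frac{(N-1)^2}{4}+\frac{1}{4r^2}+\frac{(N-1)(N-3)}{4\psi^2}+\frac{(N-1)}{4}\Bigl(2K^{rad}_{\pi,r}+(N-3)H^{tan}_{\pi,r}\Bigr)\right]\varphi\,,
\]
so that $\varphi$ is a (super)solution of the Schr\"odinger equation $H\varphi=0$ with $H$ the operator whose quadratic form we are bounding; the inequality then follows from the standard fact that, for any positive (super)solution $\varphi$ of $-\Delta_g\varphi = V\varphi$, one has $\int_M|\nabla_g u|^2\,dv_g \ge \int_M V\,u^2\,dv_g$ for all $u\in C_c^\infty(M)$ (write $u=\varphi\,v$, integrate by parts, and discard the nonnegative term $\int \varphi^2|\nabla_g v|^2$). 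Using \eqref{radlaplacian}, for a radial $\varphi$ this is the ODE identity $\varphi'' + (N-1)\frac{\psi'}{\psi}\varphi' = -V\varphi$, which is where the explicit weights must be matched.

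The natural ansatz, motivated by the known behaviour of the Green function and of the radial solution of $(-\Delta_{\hn}-\Lambda)u=0$ on $\hn$, is a product of a power of $\psi$ and a power of $r$: I would try
\[
\varphi(r) = \psi(r)^{-\frac{N-1}{2}}\, r^{\frac{1}{2}}\,,
\]
(which on $\hn$, where $\psi=\sinh r$, reduces to $(\sinh r)^{-(N-1)/2}\sqrt r$, the combination one expects from the $\Lambda=(N-1)^2/4$ shift together with an $r^{1/2}$ factor producing the $\tfrac14 r^{-2}$ Hardy term). Plugging this into $\varphi'' + (N-1)\tfrac{\psi'}{\psi}\varphi'$ and simplifying, the cross terms organize themselves: the $r^{1/2}$ factor contributes the $\tfrac14 r^{-2}$ weight exactly as in the classical one-dimensional Hardy computation, the $\psi^{-(N-1)/2}$ factor contributes a term of the form $\tfrac{(N-1)^2}{4}\bigl(\tfrac{\psi'}{\psi}\bigr)^2 - \tfrac{N-1}{2}\tfrac{\psi''}{\psi}$, and the mixed term produces $(N-1)\tfrac{\psi'}{\psi r}$; using $(\psi')^2-1 = \psi^2 H^{tan}_{\pi,r}$ and $\psi''/\psi = -K^{rad}_{\pi,r}$, together with $\psi(0)=0,\ \psi'(0)=1$, one rewrites $(\psi'/\psi)^2$ as $\tfrac{1}{\psi^2}+H^{tan}_{\pi,r}$ and checks that everything collapses to exactly the potential $V$ written above. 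This is the computational heart of the argument and the one place where assumption \eqref{condition3} enters: it is precisely what guarantees that a certain leftover first-order term $(N-1)\tfrac{\psi'}{\psi r}\varphi$ can be absorbed, i.e.\ that $\varphi$ is a genuine \emph{super}solution (the inequality $-\Delta_g\varphi\ge V\varphi$ holds) rather than merely an approximate one, which is all that is needed since the ground-state substitution only requires a supersolution.

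The main obstacle, and the step deserving the most care, is the behaviour near $r=0$: the candidate $\varphi=\psi^{-(N-1)/2}r^{1/2}$ blows up at the pole (like $r^{-(N-1)/2+1/2}=r^{(2-N)/2}$, the usual Hardy singularity), so the integration by parts $u=\varphi v$ is not literally legitimate for $u\in C_c^\infty(M)$ that do not vanish near $o$. I would handle this by the standard localization device: first prove the inequality for $u\in C_c^\infty(M\setminus\{o\})$ via the supersolution argument, and then remove the pole by a capacity/cutoff argument — multiply by $\eta_\varepsilon$ vanishing on $B_\varepsilon(o)$ and equal to $1$ outside $B_{2\varepsilon}(o)$, and check that the error terms $\int |\nabla\eta_\varepsilon|^2 u^2$ and cross terms vanish as $\varepsilon\to0$, which works for $N\ge3$ because a point has zero $H^1$-capacity in dimension $\ge3$ (this is exactly why the hypothesis $N\ge3$ appears). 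The remaining bookkeeping — verifying $\varphi>0$ on $(0,\infty)$, that $\psi'>0$ so the formulas make sense, and that the discarded gradient term is nonnegative — is routine. Finally, the passage from Proposition \ref{maintheorem} to Theorem \ref{general manifold} for a general (non-model) manifold uses the curvature comparison \eqref{condition2} together with the Laplacian comparison theorem to dominate the relevant radial quantities by their model counterparts; that is a separate step not needed for the Proposition itself.
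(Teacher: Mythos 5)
Your strategy is the paper's own: the authors perform a ground-state substitution with exactly the function you propose, since $\psi^{-\frac{N-1}{2}}r^{\frac12}=\bigl(\tfrac{r}{\psi}\bigr)^{\frac{N-1}{2}}r^{\frac{2-N}{2}}$ (this is Lemma \ref{firstlem} combined with Proposition \ref{firstprop} for the choice $f(r)=r^{(2-N)/2}$), and they likewise conclude via the supersolution principle and removal of the pole by approximation for $N\ge 3$. So the ansatz, the reduction to a positive radial (super)solution, and the treatment of the singularity at $o$ are all correct and coincide with the paper's proof.

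Two details in your write-up are nevertheless wrong and should be fixed. First, your displayed equation for $\varphi$ carries the curvature term with the wrong sign and a spurious constant: the identity one actually obtains from \eqref{radlaplacian} is
\[
-\Delta_g\varphi=\Bigl[\tfrac{1}{4r^2}+\tfrac{(N-1)(N-3)}{4\psi^2}-\tfrac{N-1}{4}\bigl(2K_{\pi,r}^{rad}+(N-3)H_{\pi,r}^{tan}\bigr)\Bigr]\varphi,
\]
which is what matches the operator whose quadratic form the Proposition bounds; your $V$, specialized to $\psi=\sinh r$, loses the $\tfrac{(N-1)^2}{4}$ term entirely. Second, and more substantively, there is no leftover first-order term and assumption \eqref{condition3} plays no role here: writing $\varphi=\psi^{-a}r^{1/2}$ with $a=\tfrac{N-1}{2}$, the mixed term in $\varphi''+(N-1)\tfrac{\psi'}{\psi}\varphi'$ has coefficient $-a+\tfrac{N-1}{2}=0$, so $\varphi$ is an \emph{exact} radial solution on the model, not merely an approximate supersolution needing a correction. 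This matters because \eqref{condition3} is not among the hypotheses of Proposition \ref{maintheorem}; if your computation genuinely left a term $(N-1)\tfrac{\psi'}{\psi r}\varphi$ requiring \eqref{condition3} to be absorbed, you would only have proved a weaker statement. In the paper, \eqref{condition3} enters only in Theorem \ref{general manifold}, for non-model manifolds, where the mean-curvature comparison $m(r,\theta)\ge(N-1)\psi'/\psi$ of Lemma \ref{comp} must be combined with the monotonicity of the supersolution (Lemma \ref{simplelemma}) to control the genuinely extra first-order term; that is precisely the ``separate step'' you correctly identify in your final sentences, so your middle paragraph contradicts your closing one.
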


First we prove some preliminary results which are useful to define a supersolution to a suitable pde. We took inspiration from \cite{BD} where a similar construction was applied in a completely different setting.

  \begin{lem}\label{firstlem}
  Let $\Phi(r) = \left ( \frac{\psi(r)}{r} \right)^{\alpha},$ where $\alpha$ is real parameter, then $\Phi$ satisfies the following
  equation:

  \begin{align*}
  - \Delta_{g} \Phi - \alpha\left[ K_{\pi,r}^{rad} + (\alpha -2+N) H_{\pi,r}^{tan} \right] \Phi
  & = - \alpha (\alpha -2+N)  \frac{\Phi}{\psi^2} \\ \notag
  &  - \frac{\alpha(\alpha + 1)}{r^2} \Phi + \frac{2 \alpha^2 + \alpha (N-1)}{r} \frac{\psi^{\prime}}{\psi} \Phi.
  \end{align*}
 Hence, $\Phi(r) = (\frac{r}{\psi(r)})^{\frac{N-1}{2}}$ satisfies
  \begin{align*}
  - \Delta_{g} \Phi + \frac{(N-1)}{4} \left[ 2 K_{\pi,r}^{rad} + (N-3) H_{\pi,r}^{tan} \right]\Phi & =
  \frac{(N-1)(N-3)}{4} \Phi\left(\frac{1}{\psi^2} - \frac{1}{r^2}\right).
  \end{align*}
  \end{lem}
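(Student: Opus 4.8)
The plan is to verify Lemma \ref{firstlem} by a direct computation using the radial Laplacian formula \eqref{radlaplacian}, and then obtain the second (main) identity as the special case $\alpha = -(N-1)/2$.

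First I would set $\Phi(r) = (\psi(r)/r)^\alpha = \psi^\alpha r^{-\alpha}$ and compute $\Phi'$ and $\Phi''$ by the product/chain rule. One gets
\[
\frac{\Phi'}{\Phi} = \alpha\left(\frac{\psi'}{\psi} - \frac1r\right),
\]
and differentiating again,
\[
\frac{\Phi''}{\Phi} = \left(\frac{\Phi'}{\Phi}\right)' + \left(\frac{\Phi'}{\Phi}\right)^2 = \alpha\left(\frac{\psi''}{\psi} - \left(\frac{\psi'}{\psi}\right)^2 + \frac1{r^2}\right) + \alpha^2\left(\frac{\psi'}{\psi} - \frac1r\right)^2.
\]
Plugging into $-\Delta_g \Phi = -\Phi'' - (N-1)\frac{\psi'}{\psi}\Phi'$, I would expand and collect terms according to their type: a term proportional to $\psi''/\psi$, a term proportional to $(\psi'/\psi)^2$, a term proportional to $1/r^2$, and a cross term proportional to $\frac1r\frac{\psi'}{\psi}$. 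The key algebraic point is that the coefficient of $(\psi'/\psi)^2$ is $-\alpha^2 - (N-1)\alpha + \alpha^2 = -(N-1)\alpha$ wait — more carefully, from $-\Phi''/\Phi$ one gets $+\alpha(\psi'/\psi)^2 - \alpha^2(\psi'/\psi)^2 + \ldots$ and from $-(N-1)\frac{\psi'}{\psi}\Phi'/\Phi$ one gets $-(N-1)\alpha(\psi'/\psi)^2 + (N-1)\alpha \frac1r \frac{\psi'}{\psi}$, so the total coefficient of $(\psi'/\psi)^2$ is $\alpha - \alpha^2 - (N-1)\alpha = -\alpha(\alpha - 2 + N)$. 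Recognizing that $-\frac{\psi''}{\psi} = K^{rad}_{\pi,r}$ and $-\frac{(\psi')^2-1}{\psi^2} = H^{tan}_{\pi,r}$, so that $-\alpha\frac{\psi''}{\psi} - \alpha(\alpha-2+N)\frac{(\psi')^2}{\psi^2} = \alpha K^{rad}_{\pi,r} + \alpha(\alpha-2+N)H^{tan}_{\pi,r} - \alpha(\alpha-2+N)\frac{1}{\psi^2}$, I move the curvature terms to the left-hand side; what remains on the right is exactly $-\alpha(\alpha-2+N)\frac{\Phi}{\psi^2} - \frac{\alpha(\alpha+1)}{r^2}\Phi + \frac{2\alpha^2+\alpha(N-1)}{r}\frac{\psi'}{\psi}\Phi$, matching the claimed identity. (The $1/r^2$ coefficient: $-\alpha$ from $-\Phi''/\Phi$, $-\alpha^2$ from the square, giving $-\alpha(\alpha+1)$; the cross term: $2\alpha^2$ from the square in $-\Phi''/\Phi$ plus $(N-1)\alpha$ from the last piece.)

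For the second identity I would substitute $\alpha = -\frac{N-1}{2}$. Then $\alpha - 2 + N = \frac{N-3}{2}$, so $\alpha(\alpha-2+N) = -\frac{(N-1)(N-3)}{4}$; the cross term coefficient $2\alpha^2 + \alpha(N-1) = \frac{(N-1)^2}{2} - \frac{(N-1)^2}{2} = 0$ vanishes, which is the point of the choice; and $\alpha(\alpha+1) = -\frac{N-1}{2}\cdot\frac{-(N-3)}{2} = -\frac{(N-1)(N-3)}{4}$. Also $-\alpha[K^{rad} + (\alpha-2+N)H^{tan}] = \frac{N-1}{2}K^{rad} + \frac{N-1}{2}\cdot\frac{N-3}{2}\cdot(-1)\cdot(-1)\,H^{tan}$ — here one checks $-\alpha(\alpha-2+N) = \frac{(N-1)(N-3)}{4}$, so the bracket becomes $\frac{(N-1)}{4}[2K^{rad}_{\pi,r} + (N-3)H^{tan}_{\pi,r}]$, and the right-hand side collapses to $\frac{(N-1)(N-3)}{4}\Phi\left(\frac1{\psi^2} - \frac1{r^2}\right)$, as stated.

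I do not expect any genuine obstacle here: this is a self-contained computational lemma and the only thing to watch is bookkeeping of signs and of the four term-types, together with correctly identifying the curvature quantities from \eqref{curvature}. The mildly delicate point worth stating explicitly is that $\Phi$ is smooth and positive on $(0,\infty)$ and, because $\psi(r) = r + o(r^2)$ near $0$ (as $\psi(0)=\psi''(0)=0$, $\psi'(0)=1$), the ratio $\psi(r)/r \to 1$, so $\Phi$ extends continuously to $r=0$ with $\Phi(0)=1$; this regularity is what will later make $\Phi$ an admissible (super)solution in the variational argument proving Proposition \ref{maintheorem}. I would present the proof simply as: "Writing $\Phi = \psi^\alpha r^{-\alpha}$ and using \eqref{radlaplacian}, a direct differentiation gives \ldots", display the logarithmic derivative identities for $\Phi'/\Phi$ and $\Phi''/\Phi$, substitute, collect terms using \eqref{curvature}, and finally specialize $\alpha$.
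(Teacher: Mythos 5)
Your proposal is correct and follows essentially the same route as the paper: a direct computation of $-\Delta_{g}\Phi$ via the radial Laplacian \eqref{radlaplacian}, collection of the four term-types, identification of the curvature quantities via \eqref{curvature}, and specialization to $\alpha=-(N-1)/2$; your use of logarithmic derivatives is only a bookkeeping variant of the paper's explicit expansion of $\Phi'$ and $\Phi''$. One sign typo to fix: $\alpha(\alpha+1)=\left(-\tfrac{N-1}{2}\right)\left(-\tfrac{N-3}{2}\right)=+\tfrac{(N-1)(N-3)}{4}$ (not $-\tfrac{(N-1)(N-3)}{4}$), and it is precisely this positive value that makes $-\alpha(\alpha+1)/r^{2}$ produce the $-\tfrac{(N-1)(N-3)}{4r^{2}}$ contribution in the stated identity.
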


  \begin{proof}
    The expression of the Riemannian Laplacian \eqref{radlaplacian}, enables us to write
  \begin{equation}\label{radl1}
   - \Delta_{g} \Phi = - \Phi^{\prime \prime} - (N-1) \frac{\psi^{\prime}}{\psi} \Phi'.
  \end{equation}

 It is easy to see that

  \begin{equation}\label{diff1}
  \Phi^{\prime}(r) = \alpha \left(\frac{\psi}{r} \right)^{\alpha - 1}
  \left[ \frac{\psi^{\prime}}{r} - \frac{\psi}{r^2} \right],
  \end{equation}
   and
  \begin{align}\label{diff2}
  \Phi^{\prime \prime}(r) & = \alpha(\alpha - 1) \left( \frac{\psi}{r} \right)^{\alpha -2}
  \left[ \frac{\psi^{\prime^2}}{r^2}  + \frac{\psi^2}{r^4} - 2 \frac{\psi \psi^{\prime}}{r^3} \right]
   + \alpha \left( \frac{\psi}{r} \right)^{\alpha - 1} \left[ \frac{\psi^{\prime \prime}}{r} - 2 \frac{\psi^{\prime}}{r^2}
   + 2 \frac{\psi}{r^3} \right].
  \end{align}

  Now we can compute \eqref{radl1}, using \eqref{diff1} and \eqref{diff2},
  \begin{align*}
   - \Phi^{\prime \prime}(r) - (N-1) \frac{\psi^{\prime}}{\psi} \Phi' & =
   - \alpha(\alpha - 1) \left( \frac{\psi}{r} \right)^{\alpha -2}
  \left[ \frac{\psi^{\prime^2}}{r^2}  + \frac{\psi^2}{r^4} - 2 \frac{\psi \psi^{\prime}}{r^3} \right] \\
  & - \alpha \left( \frac{\psi}{r} \right)^{\alpha - 1} \left[ \frac{\psi^{\prime \prime}}{r} - 2 \frac{\psi^{\prime}}{r^2}
   + 2 \frac{\psi}{r^3} \right] \\
  & - \alpha (N-1) \left( \frac{\psi}{r} \right)^{\alpha - 1} \left[ \frac{\psi^{\prime^2}}{r \psi} - \frac{\psi^{\prime}}{r^2} \right]\\
  & = \left[ - (\alpha(\alpha - 1) + \alpha(N-1) ) \frac{((\psi^{\prime})^2 - 1)}{\psi^2} -
  \alpha \frac{\psi^{\prime \prime}}{\psi} \right] \Phi\\
  & - (\alpha(\alpha - 1) + \alpha (N-1)) \frac{\Phi}{\psi^2} - \frac{\alpha(\alpha + 1)}{r^2} \Phi \\
  & + \frac{(2 \alpha^2 + \alpha(N-1))}{r} \frac{\psi^{\prime}}{\psi} \Phi.
  \end{align*}
 The proof is concluded using formulas \eqref{curvature}.
\end{proof}

 \begin{prop}\label{firstprop}
  Let $f : M \backslash \{ {o} \} \rightarrow \mathbb{R}$ be a smooth radial function and
  $\Phi(r) = (\frac{r}{\psi(r)})^{\frac{N-1}{2}},$ then $\tilde\Phi(r) = \Phi(r) f(r)$ satisfies
  \begin{equation*}\begin{aligned}
   - \Delta_{g} \tilde\Phi + \frac{(N-1)}{4} \left[ 2 K_{\pi,r}^{rad} + (N-3) H_{\pi, r}^{tan} \right] \tilde\Phi  & =
  \frac{(N-1)(N-3)}{4} \frac{\tilde\Phi}{\psi^2} - \frac{(N-1) (N-3)}{4} \frac{\tilde\Phi}{r^2} \\
  & - (f''+\frac{N-1}r f')\Phi.
  \end{aligned}
  \end{equation*}
  \end{prop}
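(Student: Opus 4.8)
The statement to prove is Proposition \ref{firstprop}, which extends Lemma \ref{firstlem} by allowing a radial factor $f(r)$ multiplying $\Phi(r) = (r/\psi(r))^{(N-1)/2}$.

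The plan is to compute $-\Delta_g(\Phi f) + \frac{N-1}{4}[2K^{rad}_{\pi,r} + (N-3)H^{tan}_{\pi,r}](\Phi f)$ directly using the product rule for the radial Laplacian \eqref{radlaplacian}, namely $\Delta_g(\Phi f) = (\Phi f)'' + (N-1)\frac{\psi'}{\psi}(\Phi f)'$, and reduce everything to the already-established identity for $\Phi$ alone from Lemma \ref{firstlem}.

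First I would expand the derivatives: $(\Phi f)'' + (N-1)\frac{\psi'}{\psi}(\Phi f)' = \left(\Phi'' + (N-1)\frac{\psi'}{\psi}\Phi'\right)f + \Phi\left(f'' + (N-1)\frac{\psi'}{\psi}f'\right) + 2\Phi' f'$, so that $-\Delta_g(\Phi f) = (-\Delta_g \Phi)f - \Phi\left(f'' + (N-1)\frac{\psi'}{\psi}f'\right) - 2\Phi' f'$. Next I would substitute the conclusion of Lemma \ref{firstlem}, which gives $-\Delta_g \Phi + \frac{N-1}{4}[2K^{rad}_{\pi,r}+(N-3)H^{tan}_{\pi,r}]\Phi = \frac{(N-1)(N-3)}{4}\Phi\left(\frac{1}{\psi^2} - \frac{1}{r^2}\right)$, so the curvature terms and the $(-\Delta_g\Phi)f$ term collapse into $\frac{(N-1)(N-3)}{4}\frac{\Phi f}{\psi^2} - \frac{(N-1)(N-3)}{4}\frac{\Phi f}{r^2}$ plus the leftover terms $-\Phi\left(f'' + (N-1)\frac{\psi'}{\psi}f'\right) - 2\Phi' f'$. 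Recognizing $\tilde\Phi = \Phi f$, the first two terms are already in the desired form.

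The remaining task, and the only computational point requiring care, is to show that $-\Phi\left(f'' + (N-1)\frac{\psi'}{\psi}f'\right) - 2\Phi' f' = -\left(f'' + \frac{N-1}{r}f'\right)\Phi$. Using \eqref{diff1} with $\alpha = -(N-1)/2$ gives $\Phi'(r) = -\frac{N-1}{2}\left(\frac{\psi}{r}\right)^{-(N-1)/2 - 1}\left[\frac{\psi'}{r} - \frac{\psi}{r^2}\right] = -\frac{N-1}{2}\Phi\left(\frac{\psi'}{\psi} - \frac{1}{r}\right)$, hence $-2\Phi' f' = (N-1)\Phi\left(\frac{\psi'}{\psi} - \frac{1}{r}\right)f'$. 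Adding this to $-(N-1)\frac{\psi'}{\psi}\Phi f'$ exactly cancels the $\frac{\psi'}{\psi}$ contribution and leaves $-\frac{N-1}{r}\Phi f'$, which combined with $-\Phi f''$ yields the claimed expression. I do not expect any genuine obstacle here; the proof is a short and essentially mechanical consequence of Lemma \ref{firstlem} together with the logarithmic-derivative identity for $\Phi$, so the main thing is simply to organize the bookkeeping so that the cancellation of the $\psi'/\psi$ terms is transparent.
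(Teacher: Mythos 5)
Your proposal is correct and follows essentially the same route as the paper: expand the radial Laplacian of $\Phi f$ by the product rule, invoke Lemma \ref{firstlem} for the $(-\Delta_g\Phi)f$ and curvature terms, and use the logarithmic derivative $\Phi'/\Phi=\frac{N-1}{2}\bigl(\frac{1}{r}-\frac{\psi'}{\psi}\bigr)$ to cancel the $\psi'/\psi$ contributions in the cross term. The bookkeeping you describe is exactly the computation carried out in the paper.
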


  \begin{proof}
  From the expression of Riemannian Laplacian on $M$ for radial function, we easily conclude

  \begin{align*}
   - \Delta_{g} \tilde\Phi(r) & = (-\Delta_{g} \Phi(r))f(r) - 2 \Phi^{\prime}(r)f^{\prime}(r) - \Phi(r)f^{\prime \prime}(r) \\
   & - (N-1)\frac{\psi^{\prime}(r)}{\psi(r)} \Phi(r) f^{\prime}(r).
  \end{align*}
  Now, using Lemma \ref{firstlem}, we have

\begin{align*}
- \Delta_{g} \tilde\Phi(r) +  \frac{(N-1)}{4} \left[ 2 K_{\pi,r}^{rad} + (N-3) H_{\pi, r}^{tan} \right]  \tilde\Phi(r) & =
\frac{(N-1)(N-3)}{4} \frac{\tilde\Phi}{\psi^2} \\ \notag
& - \frac{(N-1)(N-3)}{4} \frac{\tilde\Phi}{r^2} + (N-1) \frac{\psi^{\prime}}{\psi} \Phi(r)f^{\prime}(r) \\ \notag
& - \frac{(N-1)}{r} \Phi(r)f^{\prime}(r) - \Phi(r)f^{\prime \prime}(r)\\ \notag
& - (N-1) \frac{\psi^{\prime}}{\psi} \Phi(r) f^{\prime}(r),
\end{align*}
and hence we have the result.
\end{proof}

\noindent \textit{Proof of Proposition \ref{maintheorem} completed.}

The proof is based on supersolution technique. If we choose $f(r) = r^{\frac{(2-N)}{2}}$ in Proposition \ref{firstprop}, then $\tilde u(r):=\left ( \frac{r}{\psi(r)} \right)^{\frac{N-1}{2}} r^{\frac{(2-N)}{2}}$ satisfies
\begin{align*}
  - \Delta_{g} \tilde u + \frac{(N-1)}{4} \left[ 2 K_{\pi,r}^{rad} + (N-3) H_{\pi, r}^{tan} \right] \tilde u  & =
  \frac{(N-1)(N-3)}{4} \frac{\tilde u}{\psi^2}
   + \frac{1}{4} \frac{\tilde u}{r^2}
\end{align*}
Hence, $\tilde u(r)$ is a \emph{supersolution} of
\begin{equation*}
- \Delta_{g} u + \frac{(N-1)}{4} \left[ 2 K_{\pi,r}^{rad} + (N-3) H_{\pi,r}^{tan} \right] u - \frac{u}{4r^2} = 0.
\end{equation*}
Then, since $\tilde u(r) \in H^{1}_{\emph{loc}} (M \setminus \{ o\})$ and $\frac{1}{r^2},\frac{1}{\psi^2}\in L^{1}_{\emph{loc}} (M )$, \cite[Theorem 1.5.12]{DA} applies and the result follows, in principle for functions supported away from $o$, then by approximation.\qed
\par \medskip\par

\noindent\it Proof of the Poincar\'e-Hardy inequality \eqref{poincareeq} and of some further statements of Theorem \ref{poincare}. \rm \par \smallskip\par Inequality \eqref{poincareeq} follows from Proposition \ref{maintheorem} noticing that the hyperbolic space coincides with the model manifold associated to $\psi(r)=\sinh r$ and  in that case $K_{\pi, r}^{rad} = -1$ and $H_{\pi,r}^{tan}  = -1$. Hence, the operator $H$ defined in the statement of Theorem \ref{poincare} is nonnegative. To prove that $H$ is critical we show that the equation $Hu=0$ admits a ground state in $\hn\setminus \{o\}$, namely a positive solution of minimal growth in a neighborhood of infinity in $\hn\setminus \{o\}$, see \cite[Section 1]{PT2}. From Proposition \ref{firstprop} two linearly independent solutions of the equation $Hu=0$ are given explicitly by $v_{\pm}(r)=u_{\pm}(r)  \Phi(r)$, where $\Phi(r) = \left( \frac{r}{\sinh r} \right)^{\frac{N-1}{2}}$ and $u_{\pm}$ are two linearly independent solutions of the Euler equation
$$
-  u^{\prime \prime} - \frac{N-1}{r} u^{\prime} = C_H\,  \frac{u}{r^2}\,,
$$
where $C_{H} = \frac{(N-2)^2}{4}$ is the well known Hardy constant. Then $u_{+}(r) = r^{\frac{(2 - N)}{2}}$ and $u_{-}(r) = r^{\frac{(2 - N)}{2}} \log(r^{2-N})$ hence
$v_{+}$ is a positive global solution while $v_{-}$ changes sign. Since $| v_{-}(r)|$ is a positive solution of $H u = 0$ near infinity of $\hn\setminus\{o\}$ and
\[
\lim_{r \rightarrow 0} \frac{v_{+}(r)}{v_{-}(r)}=\lim_{r \rightarrow +\infty} \frac{v_{+}(r)}{ |v_{-}(r)|} = 0\,,
\]
by \cite[Proposition 6.1]{pinch} we conclude that $v_{+}$ is  a positive solution of minimal growth in a neighborhood of infinity in $\hn\setminus\{o\}$ and hence a ground
state of the equation $Hu=0$. Namely, $H$ is critical.\par
At last, the fact that the constant $(N-1)(N-3)/4$ is sharp in the sense described in the statement of Theorem \ref{poincare}, follows by noticing that
\[
\frac14+\frac{(N-1)(N-3)}4=\frac{(N-2)^2}4,
\]
that $\sinh r\sim r$ as $r\to0$, and that the best Hardy constant on a domain including the origin is $(N-2)^2/4$ whatever the domain is. \qed

\subsection{Hardy type inequality for general manifolds}

In this section we prove Theorem \ref{general manifold}. Before proceeding further we first recall some known facts.

Let $(M,g)$ be a Riemannian manifold. Take a point (pole) $o \in M$ and denote $Cut(o)$ the cut locus of $o.$ We can
define the polar coordinates in $M \setminus Cut^*(o),$ where $Cut^*(o) = Cut(o) \cup \{ o\}.$ Indeed, to any point
$x \in M \setminus Cut^*(o)$ we can associate the polar  radius $r(x) := \mbox{dist}(x,o)$ and the polar angle
$\theta \in \mathbb{S}^{N-1},$ such that the minimal geodesics from $o$ to $x$ starts at $o$ to the direction $\theta.$

The Riemannian metric $g$ in $M \setminus Cut^*(o)$ in the polar coordinates takes the form
\[
 ds^2 = d r^2 + a_{i,j}(r,\theta) d\theta_{i} \theta_{j},
\]
where $(\theta_{1},\ldots, \theta_{N-1})$ are coordinates on $\mathbb{S}^{N-1}$ and $((a_{i,j}))_{i,j= 1, \ldots, N}$ is a positive
definite Matrix.

Let $a: = \det(a_{i,j}),$ $B(o,\rho) = \{ x := (r, \theta): r  < \rho \}. $ Then in $M \setminus Cut^*(o)$ we have

\[
 \Delta_{M} = \frac{1}{\sqrt{a}} \frac{\partial}{\partial r}\left( \sqrt{a} \frac{\partial}{\partial r} \right)
+ \Delta_{\partial B(o,r)} = \frac{\partial^2}{\partial r^2} + m(r, \theta) \frac{\partial}{\partial r}
+ \Delta_{\partial B(o,r)},
\]
where $\Delta_{\partial B(o,r)}$ is the Laplace-Beltrami operator on the geodesic sphere $\partial B(o,r)$ and $m(r, \theta)$
is a smooth function on $(0, \infty) \times \mathbb{S}^{N-1}$ which represents the mean curvature of $\partial B(o,r)$ in the
radial direction.

Our result follows by standard Hessian comparison. We give some details for completeness and for the reader's convenience.

\begin{lem}\label{simplelemma}
Let $\tilde \Phi(r) := \left( \frac{r}{\psi(r)} \right)^{\frac{N-1}{2}} r^{\frac{2-N}{2}} $ for $r>0$. Then, $\tilde \Phi$ is non increasing if condition \eqref{condition3} holds.  
\end{lem}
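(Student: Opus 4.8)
The plan is to reduce the monotonicity of $\tilde\Phi$ to a single scalar differential inequality and then integrate. First I would simplify the expression for $\tilde\Phi$: since $\frac{N-1}{2}+\frac{2-N}{2}=\frac12$, we have
$$\tilde\Phi(r)=\left(\frac{r}{\psi(r)}\right)^{\frac{N-1}{2}}r^{\frac{2-N}{2}}=r^{1/2}\,\psi(r)^{-\frac{N-1}{2}},$$
so that, taking the logarithmic derivative on $(0,\infty)$,
$$\frac{\tilde\Phi'(r)}{\tilde\Phi(r)}=\frac{1}{2r}-\frac{N-1}{2}\,\frac{\psi'(r)}{\psi(r)}=-\frac{1}{2r\,\psi(r)}\Big((N-1)\,r\,\psi'(r)-\psi(r)\Big).$$
Because $\tilde\Phi$, $r$ and $\psi$ are all strictly positive on $(0,\infty)$, it suffices to show that $g(r):=(N-1)\,r\,\psi'(r)-\psi(r)\ge 0$ for every $r>0$.

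The second step is a one-line boundary-value argument for $g$. Using $\psi(0)=0$ we get $g(0)=0$, while
$$g'(r)=(N-1)\psi'(r)+(N-1)\,r\,\psi''(r)-\psi'(r)=(N-2)\psi'(r)+(N-1)\,r\,\psi''(r),$$
which is precisely the left-hand side of \eqref{condition3} and hence nonnegative. Therefore $g$ is nondecreasing on $[0,\infty)$ with $g(0)=0$, so $g(r)\ge 0$ for all $r\ge 0$. Substituting back into the formula above yields $\tilde\Phi'(r)\le 0$ on $(0,\infty)$, i.e. $\tilde\Phi$ is non increasing, as claimed.

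There is essentially no genuine obstacle here; the statement is an elementary consequence of \eqref{condition3}. The only mild point to keep in mind is that $\tilde\Phi$ is itself singular at the origin (indeed $\tilde\Phi(r)\sim r^{(2-N)/2}\to+\infty$ as $r\to 0^+$ for $N\ge 3$, since $\psi(r)\sim r$), so the computation is legitimately carried out on the open interval $(0,\infty)$; the auxiliary function $g$, by contrast, is $C^1$ up to $r=0$ thanks to the assumed regularity of $\psi$, which is exactly what makes the step "integrate $g'\ge0$ starting from $g(0)=0$" valid.
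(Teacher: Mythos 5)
Your proof is correct and is essentially the same as the paper's: the paper also reduces the claim to the sign of $p(r):=\psi-(N-1)r\psi'$ (your $g=-p$), notes $p(0)=0$ and $p'=-\bigl((N-2)\psi'+(N-1)r\psi''\bigr)\le 0$ by \eqref{condition3}, and concludes. Your version merely adds the explicit simplification $\tilde\Phi=r^{1/2}\psi^{-(N-1)/2}$ and the remark about regularity of $g$ at the origin, which the paper leaves implicit.
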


\begin{proof}
 It is easy to see that
\[
 \frac{\partial \tilde \Phi}{\partial r} = \frac{1}{2\sqrt{r}} \psi^{-\frac{(N-1)}{2}} \left[ 1 - (N-1) r \frac{\psi^{\prime}}{\psi} \right]
\]
\[
 = \frac{1}{2\sqrt{r}} \psi^{- \frac{(N+1)}{2}} \left[ \psi - (N-1) r \psi^{\prime} \right].
\]
Let us define
\[
 p(r) : = \psi - (N-1)r \psi^{\prime},
\]
then
\[
 p^{\prime}(r) = -(N-2)\psi^{\prime} - (N-1) r \psi^{\prime \prime} \leq 0,
\]
hence by our hypothesis, we obtain the assertion.
\end{proof}

We recall a well known fact.

 \begin{lem}\label{comp}\cite{RGR,AG}
Let $M$ be a manifold with pole $o$ satisfying the assumptions \eqref{condition1}, \eqref{condition2}. Then
 \[
 m(r, \theta) \geq (N-1) \frac{\psi^{\prime}(r)}{\psi(r)}\quad \text{for all } r>0 \text{ and } \theta\in\mathbb{S}^{N-1}.
 \]
 \end{lem}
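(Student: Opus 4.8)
The plan is to recognize $m(r,\theta)$ as the Laplacian of the distance function and to reduce the statement to a pointwise Hessian comparison against the model associated to $\psi$. From the formula for $\Delta_M$ recorded above, applied to the radial function $r$ (whose second radial derivative and spherical Laplacian vanish), one reads off $\Delta_M r = m(r,\theta)$; moreover, since $\mathrm{Hess}\,r(\partial_r,\cdot)=0$, the mean curvature is the trace of $\mathrm{Hess}\,r$ over the $(N-1)$-dimensional space $(\partial_r)^{\perp}=T_x\partial B(o,r)$. Hence it suffices to prove, for every unit $X\perp\partial_r$ at a point $\gamma(r_0)$, the bound
\[
\mathrm{Hess}\,r(X,X)\ \geq\ \frac{\psi'(r_0)}{\psi(r_0)},
\]
and then to sum over an orthonormal basis of $(\partial_r)^{\perp}$.

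First I would fix $\theta$ and the unit-speed radial geodesic $\gamma$ issuing from $o$ in the direction $\theta$; assumption \eqref{condition1} guarantees that $\gamma$ is globally minimizing and that $o$ has no conjugate points, so the normal Jacobi field $J$ along $\gamma$ with $J(0)=0$ and $J(r_0)=X$ never vanishes on $(0,r_0]$. Writing $f(r):=|J(r)|$, which is smooth and positive on $(0,r_0]$ with $f(r)=|\nabla_{\partial_r}J(0)|\,r+O(r^2)$ as $r\to0$, the identity $\mathrm{Hess}\,r(J,J)=\langle\nabla_{\partial_r}J,J\rangle=f f'$ expresses the Hessian through the logarithmic derivative $f'/f$. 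Differentiating $f$ twice, using the Jacobi equation $\nabla_{\partial_r}^2 J=-R(J,\dot\gamma)\dot\gamma$ (so that $\langle\nabla_{\partial_r}^2 J,J\rangle=-K(J\wedge\dot\gamma)\,f^2$ for $J\perp\dot\gamma$) together with the Cauchy--Schwarz inequality $|\nabla_{\partial_r}J|^2\geq(f')^2$, one obtains the differential inequality
\[
f''(r)\ \geq\ -K\big(J\wedge\dot\gamma\big)\,f(r)\ \geq\ \frac{\psi''(r)}{\psi(r)}\,f(r),
\]
where the second inequality is exactly the curvature hypothesis \eqref{condition2}. The comparison function is $\psi$ itself, which solves the corresponding equation with equality, namely $\psi''=(\psi''/\psi)\psi$ with $\psi(0)=0$, $\psi'(0)=1$.

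The conclusion then follows from a Sturm-type argument. Setting $W(r):=f'(r)\psi(r)-f(r)\psi'(r)$ and multiplying the differential inequality for $f$ by $\psi>0$, one checks $W'=f''\psi-f\psi''\geq0$, while $f(0)=\psi(0)=0$ give $W(0^+)=0$. Hence $W\geq0$ on $(0,r_0]$, i.e. $f'/f\geq\psi'/\psi$, and therefore $\mathrm{Hess}\,r(X,X)=ff'\geq(\psi'/\psi)f^2=(\psi'/\psi)|X|^2$. Summing this quadratic-form bound over an orthonormal basis of $(\partial_r)^{\perp}$ yields
\[
m(r,\theta)=\mathrm{tr}\,\mathrm{Hess}\,r\ \geq\ (N-1)\,\frac{\psi'(r)}{\psi(r)},
\]
as claimed.

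I expect the two mildly delicate points to be the behaviour at the origin and the scalar reduction. For the former, one must ensure $f$ and $\psi$ are positive on $(0,r_0]$ so that the logarithmic derivatives make sense, and that $W(0^+)=0$; this is where the absence of conjugate points from \eqref{condition1} and the normalizations $\psi(0)=0$, $\psi'(0)=1$ (which force $\psi>0$ on $(0,\infty)$) enter. The latter is the Cauchy--Schwarz step, which discards the tangential part of $\nabla_{\partial_r}J$ and is precisely what allows the comparison to be carried out on the scalar quantity $f=|J|$ without diagonalizing the full shape operator along $\gamma$; note that only the radial sectional curvatures $K(J\wedge\dot\gamma)$ are used, in agreement with hypothesis \eqref{condition2}.
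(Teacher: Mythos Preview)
The paper does not supply its own proof of this lemma; it is stated as a well-known fact with citations to Greene--Wu and Grigor'yan. Your proposal is correct and self-contained: it is precisely the standard Hessian comparison argument via normal Jacobi fields and a Sturm-type Wronskian inequality that one finds in those references, so in that sense your approach matches the cited literature rather than anything the paper itself does.
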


\noindent \it Proof of Theorem \ref{general manifold}\rm. Using Lemma \ref{comp}, the monotonicity property stated in Lemma \ref{simplelemma} and Proposition \ref{firstprop} with the choice $f(r) = r^{\frac{(2-N)}{2}}$, we get that $\tilde \Phi$ satisfies
$$
 -\Delta_{M} \tilde \Phi \geq 
 \frac{(N-1)}{4}\left[ 2\frac{\psi^{\prime \prime}}{\psi}+ (N-3)\frac{(\psi^{\prime 2}-1)}{\psi^2}\right] \tilde \Phi + \frac{(N-1)(N-3)}{4} \frac{\tilde \Phi}{\psi^2} +  \frac{1}{4} \frac{ \tilde \Phi}{r^2}\,.
$$

From the above calculations the proof of Theorem \ref{general manifold} follows at once by the supersolution method. Finally if M coincides with the Riemannian model with pole $o$ defined by $\psi$, then the above inequality becomes an equality.  Hence, using the arguments similar to that of Theorem~\ref{poincare} and exploiting once more Proposition \ref{firstprop}, one can show that
$\tilde \Phi$ is the ground state and thus proving the criticality of the resulting operator. \qed

\section{Alternative proof of optimality in Theorem \ref{poincare} and Proof of Corollary \ref{cor}}\label{hyperbolic proof}

Inequality \eqref{poincareeq} follows from Theorem \ref{maintheorem} with $\psi(r)=\sinh r$. In this section we give an alternative proof of the optimality of the constants using a suitable transformation. As a byproduct this will yield the proof of Corollary \ref{cor}.\par Let $C_{\hn}$ be the best constant in \eqref{poincareeq}, i.e

\begin{equation}\label{bestconstant1}
 C_{\hn} = \inf_{C^{\infty}_c (\mathbb{H}^{N})}
\frac{\int_{\hn} |\nabla_{\hn} u|^2 \ dv_{\hn} -  \frac{(N-1)^2}{4}\int_{\hn}  u^2 \ dv_{\hn}}{ \int_{\hn} \frac{u^2}{r^2} \ dv_{\hn}}.
\end{equation}
 Then clearly, from \eqref{poincareeq} it follows that $C_{\hn} \geq \frac{1}{4}.$  We shall show that $C_{\hn} \leq \frac{1}{4}$.

Let $B(0,1)$ be the Euclidean unit ball, $\hn$ be the ball model for the hyperbolic space and $\sigma: B(0,1) \rightarrow \hn$ be the conformal map. We recall the definition \eqref{transf}, namely
\[
 v(x) = \left( \frac{2}{1 - |x|^2} \right)^{\frac{N-2}{2}} u(\sigma(x))\quad x\in B(0,1)
\]
Then, it is easy to check that
\begin{equation}\label{conformal1}
 \int_{\hn} |\nabla_{\hn} u|^2 \ dv _{\hn} = \int_{B(0,1)} |\nabla v|^2 dx +
\frac{N(N-2)}{4} \int_{B(0,1)} \left( \frac{2}{1 - |x|^2} \right)^{2} v^2 \ dx,
\end{equation}
\begin{equation}\label{conformal2}
\int_{\hn} u^2 \ dv_{\hn} = \int_{B(0,1)} \left(\frac{1-|x|^2}{2} \right)^{N-2} v^2 \left( \frac{2}{1 - |x|^2} \right)^{N} \ dx
= \int_{B(0,1)} \left( \frac{2}{1 - |x|^2} \right)^2 v^2 \ dx,
\end{equation}
and
\begin{align}\label{conformal3}
\int_{\hn} \frac{u^2}{r^2} \ dv_{\hn} & = \int_{B(0,1)} \left( \frac{1-|x|^2}{2} \right)^{N-2}
\frac{v^2}{\left(\log\left( \frac{1 + |x|}{ 1- |x|}   \right) \right)^2} \left( \frac{2}{ 1- |x|^2} \right)^{N} \ dx \\
& = \int_{B(0,1)} \left( \frac{2}{1- |x|^2} \right)^2 \frac{v^2}{\left(\log\left( \frac{1 + |x|}{ 1- |x|}   \right) \right)^2}
\ dx. \notag
\end{align}
Now, substituting \eqref{conformal1}, \eqref{conformal2} and \eqref{conformal3} in \eqref{bestconstant1}, we have the following
inequality in the Euclidean space:

\begin{equation*}
\int_{B(0,1)} |\nabla v|^2 \ dx - \frac{1}{4} \int_{B(0,1)} \left( \frac{2}{ 1 - |x|^2} \right)^2 v^2 \ dx \geq
C_{\hn} \int_{B(0,1)} \left( \frac{2}{ 1- |x|^2} \right)^2  \frac{v^2}{\left(\log\left( \frac{1 + |x|}{ 1- |x|}   \right) \right)^2}
\ dx.
\end{equation*}
In particular, this proves Corollary \ref{cor}. On the other hand, the fact that
\begin{equation*}
 \left(\log\left( \frac{1 + |x|}{ 1- |x|}   \right) \right)^2 \leq {\left( 1- \log \left(\frac{1 - |x|}{2} \right) \right)^2}\,,
\end{equation*}
together with elementary computations,  gives

\begin{equation}\label{bestconstant3}
\int_{B(0,1)} |\nabla v|^2 - \frac{1}{4} \int_{B(0,1)} \frac{v^2}{d^2} \ dx \geq C_{\hn}
\int_{B(0,1)} \frac{v^2}{d^2(1 - \log(\frac{d}{2}))^2} \ dx,
 \end{equation}
 where $d(x) := d = \mbox{dist}(\partial B(0,1), x) = ( 1 - |x|).$ Comparing \eqref{bestconstant3} with \cite[Theorem A]{GFT}, we finally get
\[
 C_{\hn} \leq \frac{1}{4}.
\]
Hence, $C_{\hn} = \frac{1}{4}$ and we conclude.\qed

\section{Proof of Theorem \ref{TPR}}\label{rellich-proof}
In Section \ref{ineq} we prove the stated Poincar\'{e}-Rellich inequality by using orthogonal decomposition in spherical harmonics and a suitable 1-dimensional Hardy-type
inequality. Then, in Section \ref{optimal} we prove the optimality of the first constant and state some hints suggesting optimality of the latter.

\subsection{Proof of inequality \eqref{PR}} \label{ineq}
 We first prove the following 1-dimensional Hardy-type inequality.

\begin{lem}\label{hardytype}

For all $ u \in C^{\infty}_c(0,\infty)$ there holds

\begin{equation*}
 \int_{0}^{\infty} \frac{u^{\prime 2}}{\sinh^2 r} \ dr \geq \frac{9}{4} \int_{0}^{\infty} \frac{u^2}{\sinh^4 r} \ dr
+ \int_{0}^{\infty} \frac{u^2}{\sinh^2 r} \ dr.
\end{equation*}

\begin{proof}

The proof mainly relies on integration by parts. Let us put $u := w \sinh r,$ where $w \in C^{\infty}_{c}(0, \infty)$ and compute

\begin{equation}
\begin{aligned}\label{oned}
 \int_{0}^{\infty} \frac{{u^{\prime}}^2}{\sinh^2 r}  &= \int_{0}^{\infty} \left[ {w^{\prime}}^2 +
w^2 \frac{\cosh^2 r}{\sinh^2 r} +  2 w w^{\prime} \frac{\cosh r}{\sinh r} \right] \ dr \\
& = \int_{0}^{\infty} \left[ {w^{\prime}}^2 + w^2 + \frac{w^2}{\sinh^2 r} +
2 w w^{\prime} \frac{\cosh r}{\sinh r} \right] \ dr.
\end{aligned}
\end{equation}

Moreover,

\begin{align*}
\int_{0}^{\infty} w w^{\prime} \frac{\cosh r}{\sinh r} \ dr = - \int_{0}^{\infty}
\left( \frac{\cosh r}{\sinh r} \right)^{\prime} w^2 \ dr - \int_{0}^{\infty} w w^{\prime} \frac{\cosh r}{\sinh r} \ dr,
\end{align*}
and hence

\begin{equation}\label{oned1}
 2 \int_{0}^{\infty} w w^{\prime} \frac{\cosh r}{\sinh r} \ dr = \int_{0}^{\infty} \frac{w^2}{\sinh^2 r}.
\end{equation}

Now putting \eqref{oned1} in \eqref{oned} and using 1-dimensional Hardy inequality, we have

\begin{align*}
\int_{0}^{\infty} \frac{{u^{\prime}}^2}{\sinh^2 r} & =  \int_{0}^{\infty} \left[ {w^{\prime}}^2 + w^2 +
2 \frac{w^2}{\sinh^2 r} \right] \ dr \\
& \geq \frac{1}{4} \int_{0}^{\infty} \frac{w^2}{r^2} \ dr +  \int_{0}^{\infty} w^2 \ dr +
2 \int_{0}^{\infty}  \frac{w^2}{\sinh^2 r} \ dr \\
& \geq \frac{1}{4} \int_{0}^{\infty}  \frac{w^2}{\sinh^2 r} \ dr + \int_{0}^{\infty} w^2 \ dr +
2 \int_{0}^{\infty} \frac{w^2}{\sinh^2 r}  \ dr \  \ (\mbox{since,}\sinh r > r) \\
& = \frac{9}{4} \int_{0}^{\infty} \left[ \frac{w^2}{\sinh^2 r} + w^2 \right] \ dr\\
& = \frac{9}{4} \int_{0}^{\infty} \frac{u^2}{\sinh^4 r} \ dr + \int_{0}^{\infty} \frac{u^2}{\sinh^2 r} \ dr,
\end{align*}
this proving the claim.

%

\end{proof}
\end{lem}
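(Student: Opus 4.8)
The plan is to reduce this weighted one–dimensional estimate to the classical Hardy inequality $\int_0^\infty (w')^2\,dr\ge\tfrac14\int_0^\infty w^2/r^2\,dr$ by means of the substitution $u=w\sinh r$, which is the natural one here because $\sinh r$ is, up to constants, the profile attached to the weight $1/\sinh^2 r$. Since $\sinh r>0$ on $(0,\infty)$, the assignment $u\mapsto w:=u/\sinh r$ maps $C^\infty_c(0,\infty)$ onto itself and $w$ has compact support strictly inside $(0,\infty)$; consequently all the weights that will occur ($1/\sinh^2 r$ and $\coth r$) are smooth and bounded on $\operatorname{supp}w$, so there are no integrability issues and every boundary term in the integrations by parts below vanishes.

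First I would substitute and expand: from $u'=w'\sinh r+w\cosh r$ together with $\coth^2 r=1+1/\sinh^2 r$,
\[
\frac{(u')^2}{\sinh^2 r}=(w')^2+w^2+\frac{w^2}{\sinh^2 r}+2ww'\coth r .
\]
The cross term is then handled by integration by parts, writing $2ww'=(w^2)'$ and using $(\coth r)'=-1/\sinh^2 r$, which gives $\int_0^\infty 2ww'\coth r\,dr=\int_0^\infty w^2/\sinh^2 r\,dr$. Hence
\[
\int_0^\infty \frac{(u')^2}{\sinh^2 r}\,dr=\int_0^\infty (w')^2\,dr+\int_0^\infty w^2\,dr+2\int_0^\infty \frac{w^2}{\sinh^2 r}\,dr .
\]

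To finish, I would bound $\int_0^\infty (w')^2\,dr$ from below by the one–dimensional Hardy inequality and then discard a little using $r<\sinh r$ on $(0,\infty)$, so that $\int_0^\infty (w')^2\,dr\ge\tfrac14\int_0^\infty w^2/\sinh^2 r\,dr$; adding the two remaining terms produces $\tfrac94\int_0^\infty w^2/\sinh^2 r\,dr+\int_0^\infty w^2\,dr$, and translating back through $w^2/\sinh^2 r=u^2/\sinh^4 r$ and $w^2=u^2/\sinh^2 r$ yields exactly the asserted inequality. The argument is entirely elementary, so there is no genuine obstacle; the only points deserving a word are that the substitution stays within $C^\infty_c(0,\infty)$ (hence the integrations by parts carry no boundary contributions) and that the only loss occurs in the single estimate $r<\sinh r$, the constant $9/4=\tfrac14+2$ collecting the Hardy contribution and the integration-by-parts contribution.
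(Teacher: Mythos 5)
Your argument is correct and coincides with the paper's own proof: the same substitution $u=w\sinh r$, the same integration by parts of the cross term via $(\coth r)'=-1/\sinh^2 r$, the same application of the one-dimensional Hardy inequality followed by $r<\sinh r$, yielding $9/4=\tfrac14+2$. The only addition is your explicit remark that $w\in C^\infty_c(0,\infty)$ so all boundary terms vanish, which the paper leaves implicit.
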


Let us recall some informations on the spherical harmonics and Laplace-Beltrami operator on the hyperbolic space.
By \eqref{laplacian} with $\psi(r)=\sinh r$, the Laplace-Beltrami operator in spherical coordinates is given by
\[
 \Delta_{\hn} = \frac{\partial^2}{\partial r^2} + (N-1)\, \coth r
\frac{\partial}{\partial r} + \frac{1}{\sinh ^2 r} \Delta_{\mathbb{S}^{N-1}},
\]
where $\Delta_{\mathbb{S}^{N-1}}$ is the Laplace-Beltrami operator on the unit sphere $\mathbb{S}^{N-1}.$
If we write $u(x) = u(r,\sigma) \in C_c^{\infty}(\hn),$ $r \in [0, \infty), \sigma \in \mathbb{S}^{N-1},$ then
by \cite[Ch.4, Lemma 2.18]{ES}  we have that
\[
 u(x): = u(r, \sigma) = \sum_{n= 0}^{\infty} d_{n}(r) P_{n}(\sigma)
\]
in $L^2({\hn}),$ where $\{ P_{n}\}$ is a complete orthonormal system of spherical harmonics and
\[
 d_{n}(r) = \int_{\mathbb{S}^{N-1}} u(r, \sigma) P_{n}(\sigma) \ d\sigma.
\]
We note that the spherical harmonic $P_{n}$ of order $n$ is the restriction to $\mathbb{S}^{N-1}$ of a homogeneous harmonic
polynomial of degree $n.$ Now we recall the following

\begin{lem}\label{arm} \cite[Lemma 2.1]{MSS}
 Let $P_{n}$ be a spherical harmonic of order $n$ on $\mathbb{S}^{N-1}.$ Then for every $n \in \mathbb{N}_{0}$

\[
 \Delta_{\mathbb{S}^{N-1}} P_{n} = -(n^2 + (N-2)n)P_{n}.
\]
The values $\lambda_{n} := n^2 + (N-2)n$ are the eigenvalues of the Laplace-Beltrami operator $- \Delta_{\mathbb{S}^{N-1}}$
on $\mathbb{S}^{N-1}$ and enjoy the property $\lambda_{n} \geq 0$ and $\lambda_{0} = 0.$
The corresponding eigenspace consists of all the spherical harmonics of order $n$ and has dimension $d_{n}$ where $d_{0} = 1,$
$d_{1} = N$ and
$$
 d_{n} = \left(\begin{array}{c}
N+n-1\\
n
\end{array} \right) - \left(\begin{array}{c}
N+n-3\\
n-2\
\end{array}\right),
$$
for $n\geq 2.$
\end{lem}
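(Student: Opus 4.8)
The plan is to reduce the statement to the classical structure theory of harmonic polynomials on $\mathbb{R}^N$, using that a spherical harmonic of order $n$ on $\mathbb{S}^{N-1}$ is, by definition, the restriction to the sphere of a homogeneous harmonic polynomial $Q$ of degree $n$ in $N$ variables. First I would write the flat Laplacian in polar coordinates $x = r\sigma$, $r = |x|$, $\sigma \in \mathbb{S}^{N-1}$, namely $\Delta_{\mathbb{R}^N} = \partial_r^2 + \tfrac{N-1}{r}\,\partial_r + \tfrac{1}{r^2}\Delta_{\mathbb{S}^{N-1}}$ (the flat model $\psi(r) = r$ of the spherically symmetric formula used above). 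Writing $Q(x) = r^n P_n(\sigma)$ and using homogeneity,
\[
\Delta_{\mathbb{R}^N}Q = \big(n(n-1) + n(N-1)\big)r^{n-2}P_n + r^{n-2}\Delta_{\mathbb{S}^{N-1}}P_n = r^{n-2}\Big(\big(n^2 + (N-2)n\big)P_n + \Delta_{\mathbb{S}^{N-1}}P_n\Big),
\]
so $\Delta_{\mathbb{R}^N}Q = 0$ forces $\Delta_{\mathbb{S}^{N-1}}P_n = -(n^2 + (N-2)n)P_n$; then $\lambda_n := n(n+N-2) \ge 0$ and $\lambda_0 = 0$ are immediate.

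Next I would show that the $\lambda_n$ exhaust the spectrum of $-\Delta_{\mathbb{S}^{N-1}}$ and that the order-$n$ spherical harmonics form the entire $\lambda_n$-eigenspace. The key input is the Fischer (harmonic) decomposition $\mathcal{P}_m = \bigoplus_{k \ge 0} |x|^{2k}\mathcal{H}_{m-2k}$ of the space $\mathcal{P}_m$ of homogeneous degree-$m$ polynomials into harmonic components $\mathcal{H}_j$. Since polynomials are dense in $C(\mathbb{S}^{N-1})$ by Stone--Weierstrass, hence in $L^2(\mathbb{S}^{N-1})$, this decomposition shows the restrictions $\mathcal{H}_n|_{\mathbb{S}^{N-1}}$ span $L^2(\mathbb{S}^{N-1})$; as $-\Delta_{\mathbb{S}^{N-1}}$ is self-adjoint with compact resolvent, each $\mathcal{H}_n|_{\mathbb{S}^{N-1}}$ lies in the $\lambda_n$-eigenspace, and $n \mapsto n(n+N-2)$ is strictly increasing on $\{0,1,2,\dots\}$ (so the $\lambda_n$ are pairwise distinct), it follows that the $\lambda_n$ are exactly the eigenvalues and the $\mathcal{H}_n|_{\mathbb{S}^{N-1}}$ exactly the corresponding eigenspaces.

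Finally, for the dimension formula I would use that $\Delta\colon \mathcal{P}_n \to \mathcal{P}_{n-2}$ is surjective with kernel $\mathcal{H}_n$, whence $d_n = \dim\mathcal{P}_n - \dim\mathcal{P}_{n-2}$; a stars-and-bars count of monomials gives $\dim\mathcal{P}_m = \binom{N+m-1}{m}$, yielding $d_n = \binom{N+n-1}{n} - \binom{N+n-3}{n-2}$ for $n \ge 2$, while for $n = 0,1$ the subtracted term is absent (since $\mathcal{P}_{-1} = \mathcal{P}_{-2} = \{0\}$), so $d_0 = \binom{N-1}{0} = 1$ and $d_1 = \binom{N}{1} = N$. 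The only genuinely non-formal ingredient, and the step I would take most care to present cleanly, is the surjectivity of $\Delta$ on $\mathcal{P}_n$ (equivalently, the Fischer decomposition used above): the slick route is to equip $\mathcal{P}_n$ with the apolar (Bombieri) inner product, observe that the formal adjoint of $\Delta$ with respect to it is multiplication by $|x|^2$, which is manifestly injective, and conclude surjectivity of $\Delta$ by finite-dimensional duality, iterating to obtain the decomposition. Since Lemma \ref{arm} is entirely classical and is quoted verbatim from \cite[Lemma 2.1]{MSS}, in the paper one simply cites it; the sketch above is what one reconstructs if needed.
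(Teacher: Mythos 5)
Your reconstruction is correct, and it matches the situation in the paper exactly as you anticipate at the end: the paper offers no proof of this lemma at all, but simply quotes it as \cite[Lemma 2.1]{MSS}, so there is no in-paper argument to compare against. Your sketch is the standard classical proof --- the polar-coordinate computation $\Delta_{\mathbb{R}^N}(r^nP_n)=r^{n-2}\bigl((n^2+(N-2)n)P_n+\Delta_{\mathbb{S}^{N-1}}P_n\bigr)$ for the eigenvalue equation, Fischer decomposition plus Stone--Weierstrass for completeness and exhaustion of the spectrum (using that $n\mapsto n(n+N-2)$ is injective on $\mathbb{N}_0$), and surjectivity of $\Delta\colon\mathcal{P}_n\to\mathcal{P}_{n-2}$ via the apolar pairing for the dimension count $d_n=\binom{N+n-1}{n}-\binom{N+n-3}{n-2}$ --- and all the steps are sound.
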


From Lemma \ref{arm} it is easy to see that

\begin{equation*}
 \Delta_{\hn} u(r, \sigma) = \sum_{n= 0}^{\infty} \left( d_{n}^{\prime \prime}(r) +
(N-1) \coth r d_{n}^{\prime} (r) - \frac{\lambda_{n} d_{n}(r)}{\sinh^2 r} \right) P_{n}(\sigma).
\end{equation*}

Let $u \in C^{\infty}_c(\hn)$ and  make the following transformation
\[
 v = (\sinh r)^{\frac{N-1}{2}} u.
\]
Then

\begin{equation*}
 \Delta_{\hn} v = \left( \frac{\partial^2}{\partial r^2} + (N-1) \coth r \frac{\partial}{\partial r} +
\frac{1}{\sinh^2r} \Delta_{\mathbb{S}^{N-1}}\right) (\sinh r)^{\frac{N-1}{2}} u.
\end{equation*}

We compute:

\begin{align*}
  \Delta_{\hn} v& = \frac{(N-1)(N-3)}{4} (\sinh r)^{\frac{N-1}{2}} \coth^2 r \ u + (N-1) (\sinh r)^{\frac{N-3}{2}} \cosh r
\frac{\partial u}{\partial r} \\
& + \frac{(N-1)}{2} (\sinh r)^{\frac{N-1}{2}} u + (\sinh r)^{\frac{N-1}{2}} \frac{\partial^2 u}{\partial r^2} \\
& + \frac{(N-1)^2}{2} \coth^2 r (\sinh r)^\frac{N-1}{2} u + (N-1) \coth r (\sinh r)^{\frac{N-1}{2}} \frac{\partial u}{\partial r} \\
& + (\sinh r)^{\frac{N-1}{2}} \frac{1}{\sinh^2 r} \Delta_{\mathbb{S}^{N-1}} u\\
& = (\sinh r)^{\frac{N-1}{2}} \left[ \frac{\partial^2 u}{\partial r^2} + (N-1) \coth r \frac{\partial u}{\partial r}
+ \frac{1}{\sinh^2 r} \Delta_{\mathbb{S}^{N-1}} u \right] \\
& + \left[ \frac{(N-1)(N-3)}{4} + \frac{(N-1)^2}{2} \right] \coth^2 r (\sinh r)^{\frac{N-1}{2}} u +
\frac{(N-1)}{2} (\sinh r)^{\frac{N-1}{2}} u \\
&+ (N-1) (\sinh r)^{\frac{N-3}{2}} \cosh r \left[ \frac{\partial}{\partial r} ((\sinh r)^{- \frac{(N-1)}{2}} v) \right]\\
& = (\sinh r)^{\frac{N-1}{2}} (\Delta_{\hn} u) +  \left[ \frac{(N-1)(N-3)}{4} + \frac{(N-1)^2}{2} \right] \coth^2 r v \\
& + \frac{(N-1)}{2} v - \frac{(N-1)^2}{2} \coth^2 r v + (N-1) \coth r \frac{\partial v}{\partial r} \\
& = (\sinh r)^{\frac{N-1}{2}} (\Delta_{\hn} u) + \frac{(N-1)(N-3)}{4} \coth^2 r v + \frac{(N-1)}{2} v + (N-1) \coth r
\frac{\partial v}{\partial r}\,.
\end{align*}
Hence, we have
\begin{equation}
 \begin{aligned}\label{trans1}
  \Delta_{\hn} u  &= \frac1{(\sinh r)^{\frac{(N-1)}{2}}} \left[ \Delta_{\hn} v -
 \left( \frac{(N-1)(N-3)}{4} \coth^2 r + \frac{(N-1)}{2} \right) v \right.  \left. -(N-1)\coth r \frac{\partial v}{\partial r} \right] \\
&= \frac1{(\sinh r)^{\frac{(N-1)}{2}}} \left[ \frac{\partial^2 v}{\partial r^2} - \left( \frac{(N-1)(N-3)}{4} \coth^2 r
+ \frac{(N-1)}{2} \right) v \right. \left. + \frac{1}{\sinh^2 r} \Delta_{\mathbb{S}^{N-1}} v \right].
 \end{aligned}
 \end{equation}

Now, expanding $v$ in the spherical harmonics
\[
 v(x) := v(r, \sigma) = \sum_{n=0}^{\infty} d_{n}(r) P_{n}(\sigma)
\]
and putting this in \eqref{trans1}, we have

\begin{align*}
 \int_{\hn} |\Delta_{\hn} u|^2 \ dv_{\hn} & =
\sum_{n=0}^{\infty} \int_{0}^{\infty} \left( d_{n}^{\prime \prime}(r) - \frac{(N-1)(N-3)}{4} \coth^2 r d_{n}(r) \right. \notag \\
&\left. - \frac{(N-1)}{2} d_{n}(r) - \frac{\lambda_{n}}{\sinh^2 r} d_{n}(r) \right)^2 \ dr,
\end{align*}
where the eigenvalues $\lambda_{n}$ are repeated according to their multiplicity.  We consider any given term in the series above and write it as follows:
\begin{equation}
\begin{aligned}\label{rellichpf1}
& \int_{0}^{\infty} \left( d_{n}^{\prime \prime}(r) - \frac{(N-1)(N-3)}{4} \coth^2 r d_{n}(r)
 - \frac{(N-1)}{2} d_{n}(r) - \frac{\lambda_{n}}{\sinh^2 r} d_{n}(r) \right)^2 \ dr  \\
&=  \int_{0}^{\infty} (d_{n}^{\prime \prime}(r))^2 \ dr + \int_{0}^{\infty} \left( \frac{(N-1)(N-3)}{4} \coth^2 r + \frac{(N-1)}{2}
+ \frac{\lambda_{n}}{\sinh^2 r} \right)^2 d_{n}^2 \ dr \\
&- \left( \frac{(N-1)(N-3)}{2} \coth^2 r + (N-1) + \frac{2 \lambda_{n}}{\sinh^2 r } \right) d_{n}^{\prime \prime}(r) d_{n}(r) \ dr \\
& = \int_{0}^{\infty} (d_{n}^{\prime \prime}(r))^2 \ dr + \int_{0}^{\infty} \left( \frac{(N-1)^2}{4} d_{n}^2(r)
+ \frac{\lambda_{n}^2}{\sinh^4 r} d_{n}^2(r) \right.    \\
&\left. + \left( \frac{(N-1)(N-3)}{4} \right)^2 \coth^4 r d_{n}^2(r) + \frac{(N-1)^2 (N-3)}{4} \coth^2 r d_{n}^2 (r)
+ \frac{(N-1) \lambda_{n}}{\sinh^2 r} d_{n}^{2} (r) \right.    \\
&+ \left. \frac{(N-1)(N-3) \lambda_{n}}{2} \frac{\coth^2r}{\sinh^2 r} d_{n}^2 \right) \ dr
- \frac{(N-1)(N-3)}{2} \int_{0}^{\infty} \coth^2 r d_{n}^{\prime \prime}(r) d_{n}(r) \ dr   \\
 & -(N-1) \int_{0}^{\infty} d_{n}^{\prime \prime}(r) d_{n}(r) \ dr
 - 2 \lambda_{n} \int_{0}^{\infty} \frac{1}{\sinh^2 r} d_{n}^{\prime \prime}(r) d_{n}(r) \ dr.   \\
\end{aligned}\end{equation}
 Now we consider each term separately. First let us evaluate the negative terms using integration by parts :
\begin{equation}
 \begin{aligned}\label{rellichpf2}
 &\frac{(N-1)(N-3)}{2}  \int_{0}^{\infty} \coth^2 r d_{n}^{\prime \prime}(r) d_{n}(r) \ dr =  - \frac{(N-1)(N-3)}{2} \int_{0}^{\infty}   (d_{n}^{\prime}(r))^2 \ dr  \\
& - \frac{(N-1)(N-3)}{2} \int_{0}^{\infty}  \frac{1}{\sinh^2 r}   (d_{n}^{\prime}(r))^2 \ dr + \frac{(N-1)(N-3)}{2} \int_{0}^{\infty} \frac{\coth r}{\sinh^2 r} \frac{d}{dr} (d_{n}(r))^2 \ dr \\
& = - \frac{(N-1)(N-3)}{2} \int_{0}^{\infty}   (d_{n}^{\prime}(r))^2 \ dr  - \frac{(N-1)(N-3)}{2} \int_{0}^{\infty}  \frac{1}{\sinh^2 r}   (d_{n}^{\prime}(r))^2 \ dr \\
& + \frac{3}{2} (N-1)(N-3) \int_{0}^{\infty} \frac{1}{\sinh^4 r } (d_{n}(r))^2 \ dr + (N-1)(N-3) \int_{0}^{\infty} \frac{1}{\sinh^2 r } (d_{n}(r))^2 \ dr.
 \end{aligned}
 \end{equation}

\begin{align}\label{rellichpf3}
(N-1) \int_{0}^{\infty} d_{n}^{\prime \prime}(r) d_{n}(r) \ dr =  -(N-1) \int_{0}^{\infty} (d_{n}^{\prime}(r))^2 \ dr.
\end{align}

\begin{align}\label{rellichpf4}
2 \lambda_{n}  \int_{0}^{\infty} \frac{1}{\sinh^2 r}  d_{n}^{\prime \prime}(r) d_{n}(r) \ dr & = - 2 \lambda_{n} \int_{0}^{\infty} \frac{1}{\sinh^2 r} (d_{n}^{\prime}(r))^2 \ dr
+ 2 \lambda_{n} \int_{0}^{\infty} \frac{\coth r}{\sinh^2 r} \frac{d}{dr} (d_{n}(r))^2 \ dr \notag \\
& = - 2 \lambda_{n} \int_{0}^{\infty} \frac{1}{\sinh^2 r} (d_{n}^{\prime}(r))^2 \ dr  + 6 \lambda_{n} \int_{0}^{\infty} \frac{1}{\sinh^4 r} (d_{n}(r))^2 \ dr \notag \\
& + 4 \lambda_{n} \int_{0}^{\infty} \frac{1}{\sinh^2 r} (d_{n}(r))^2 \ dr.
\end{align}
 Taking in to account \eqref{rellichpf2}, \eqref{rellichpf3}, \eqref{rellichpf4} and inserting in \eqref{rellichpf1} we get,

 \begin{align*}
&\int_{0}^{\infty} \left( d_{n}^{\prime \prime}(r) - \frac{(N-1)(N-3)}{4} \coth^2 r d_{n}(r)
 - \frac{(N-1)}{2} d_{n}(r) - \frac{\lambda_{n}}{\sinh^2 r} d_{n}(r) \right)^2 \ dr \\
&= \int_{0}^{\infty} (d_{n}^{\prime \prime}(r))^2 \ dr + (N-1) \int_{0}^{\infty} (d_{n}^{\prime}(r))^2 \ dr
+ \frac{(N-1)(N-3)}{2} \int_{0}^{\infty}  (d_{n}^{\prime} (r))^2 \ dr \\
& + \frac{(N-1)(N-3)}{2} \int_{0}^{\infty}    \frac{1}{\sinh^2 r} (d_{n}^{\prime}(r))^2 \ dr +
2 \lambda_{n} \int_{0}^{\infty} \frac{1}{\sinh^2 r} (d_{n}^{\prime}(r)^2 \ dr \\
&+ \int_{0}^{\infty} \left( \frac{(N-1)^2}{4} (d_{n}(r))^2
+ \frac{\lambda_{n}^2}{\sinh^4 r} (d_{n}(r))^2
 + \left( \frac{(N-1)(N-3)}{4} \right)^2 \coth^4 r (d_{n}(r))^2 \right. \\
& \left.  + \frac{(N-1)^2 (N-3)}{4} \coth^2 r (d_{n}(r))^2   + \frac{(N-1) \lambda_{n}}{\sinh^2 r} (d_{n} (r))^2
 +  \frac{(N-1)(N-3) \lambda_{n}}{2} \frac{\coth^2r}{\sinh^2 r} (d_{n}(r))^2 \right) \ dr  \\
& -   \frac{3}{2}(N-1)(N-3) \int_{0}^{\infty} \frac{1}{\sinh^4 r} (d_{n}(r))^2  \ dr
 - (N-1)(N-3) \int_{0}^{\infty} \frac{1}{\sinh^2 r} (d_{n} (r))^2 \ dr \\
&  - 6 \lambda_{n} \int_{0}^{\infty} \frac{d_{n}^2(r)}{\sinh^4 r} \ dr  - 4 \lambda_{n} \int_{0}^{\infty} \frac{d_{n}^2(r)}{\sinh^2 r} \ dr.
\end{align*}

Upon simplifying further we get,

\begin{align*}
&\int_{0}^{\infty} \left( d_{n}^{\prime \prime}(r) - \frac{(N-1)(N-3)}{4} \coth^2 r d_{n}(r)
 - \frac{(N-1)}{2} d_{n}(r) - \frac{\lambda_{n}}{\sinh^2 r} d_{n}(r) \right)^2 \ dr \\ & = \int_{0}^{\infty} (d_{n}^{\prime \prime}(r))^2 \ dr + \frac{(N-1)^2}{2}  \int_{0}^{\infty} (d_{n}^{\prime}(r))^2 \ dr  + \left( \frac{(N-1)(N-3)}{2} +
2 \lambda_{n} \right)  \int_{0}^{\infty} \frac{1}{\sinh^ 2 r}  (d_{n}^{\prime}(r))^2 \ dr   \notag \\
& \frac{(N-1)^4}{16} \int_{0}^{\infty} (d_{n}(r))^2 \ dr + \left( \lambda_{n}^2 + \frac{(N-1)(N-3)}{2} \lambda_{n} - 6 \lambda_{n} + \frac{(N-1)^2 (N-3)^2}{16} \right. \notag \\
& \left. - \frac{3}{2} (N-1)(N-3) \right) \int_{0}^{\infty} \frac{1}{\sinh^4 r} (d_{n}(r))^2 \ dr  + \left( \frac{(N-1)^2(N-3)^2}{8} + \frac{(N-1)^2(N-3)}{4}  \right. \notag \\
& \left. + \frac{(N-1)(N-3)}{2} \lambda_{n} + (N-5) \lambda_{n} - (N-1)(N-3) \right) \int_{0}^{\infty} \frac{1}{\sinh^2 r} (d_{n}(r))^2 \ dr. \notag \\
\notag \end{align*}

In order to estimate the second order term we use the 1-dimensional Rellich inequality \cite{R}:

\[
 \int_{0}^{\infty} d_{n}^{\prime \prime}(r) \ dr \geq \frac{9}{16} \int_{0}^{\infty} \frac{d_{n}^2 (r)}{r^4} \ dr ,
\]
combining this with the Lemma \ref{hardytype}  and one dimensional Hardy inequality we get

\begin{align*}
 & \int_{0}^{\infty} \left( d_{n}^{\prime \prime}(r) - \frac{(N-1)(N-3)}{4} \coth^2 r d_{n}(r)
 - \frac{(N-1)}{2} d_{n}(r) - \frac{\lambda_{n}}{\sinh^2 r} d_{n}(r) \right)^2 \ dr \\
& \geq \frac{9}{16} \int_{0}^{\infty} \frac{d_{n}^2(r)}{r^4} \ dr + \frac{(N-1)^2}{8} \int_{0}^{\infty} \frac{d_{n}^2(r)}{r^2} \ dr
 + \frac{(N-1)^4}{16} \int_{0}^{\infty} d_{n}^{2}(r) \ dr \\
& + A_{n} \int_{0}^{\infty} \frac{d_{n}^2(r)}{\sinh^4 r} \ dr + B_{n} \int_{0}^{\infty} \frac{d_{n}^2 (r)}{\sinh^2 r} \ dr,
\end{align*}
where
\[
 A_{n} = \left[ \lambda_{n}^2 +  \frac{N(N-4)}{2} \lambda_{n} +
\frac{((N-1)(N-3))^2}{16} - \frac{3}{8} (N-1)(N-3) \right]
\]
and
\[
B_{n} = \left[   \frac{(N+1)(N-3)}{2} \lambda_{n} + \frac{(N-1)^2(N-3)}{4} + \frac{((N-1)(N-3))^2}{8} - \frac{(N-1)(N-3)}{2} \right].
\]
We note that
\[
\min_{n \in \mathbb{N}_{0}} A_{n}=\frac{(N-1)(N-3)(N^2-4N-3)}{16} \quad \mbox{and} \quad \min_{n \in \mathbb{N}_{0}} B_{n}=\frac{(N^2-1)(N-3)^2}{8}
\]
so that they are both positive for $N\geq 5$. Also we have

\[
 \int_{\hn} u^2 \ dv_{\hn} = \int_{\hn} v^2 (\sinh r)^{-(N-1)} \ dv_{\hn} = \sum_{n= 0}^{\infty} \int_{0}^{\infty} d_{n}^2 \ dr,
\]
similarly,
\[
 \int_{\hn} \frac{u^2}{r^2} \ dv_{\hn} = \sum_{n=0}^{\infty} \int_{0}^{\infty} \frac{d_{n}^2(r)}{r^2} \ dr,
\]
and so on.

Now using all these facts we obtain
\begin{align*}
 \int_{\hn} |\Delta_{\hn} u|^2 \ dv_{\hn} - \frac{(N-1)^4}{16} \int_{\hn} u^2 \ dv_{\hn}
\geq \frac{9}{16} \int_{\hn} \frac{u^2}{r^4} \ dv_{\hn} + \frac{(N-1)^2}{8} \int_{\hn} \frac{u^2}{r^2} \ dv_{\hn}\\
\frac{(N-1)(N-3)(N^2-4N-3)}{16} \int_{\hn} \frac{u^2}{\sinh^4 r} \ dv_{\hn} +\frac{(N^2-1)(N-3)^2}{8} \int_{\hn} \frac{u^2}{\sinh^2 r} \ dv_{\hn},
\end{align*}
namely \eqref{PR}. $\Box$

\subsection{Optimal constant in \eqref{PR}}\label{optimal}
In this section we show the optimality of the first constant in \eqref{PR}. Inspired by \cite{vaz}, we introduce the
following change of variables:
\begin{equation}\label{be1}
 \frac{ds}{s^{N-1}} = \frac{dr}{(\sinh r)^{N-1}}.
\end{equation}

By \eqref{be1}  and restricting to radial functions, one has

\begin{equation*}
 \Delta_{\hn} U(r) = \left( \frac{s}{\sinh r(s)} \right)^{2(N-1)} \Delta V(s),
\end{equation*}

where $V(s) = U(r(s))$ and $\Delta$ denotes the Euclidean Laplacian. Reading \eqref{PR} with the above transformation we have

\begin{prop}\label{onedimensional}
Let $ N \geq 5$ and $r=r(s)$ be as defined in \eqref{be1}. For every $v \in C^{\infty}_c(0,+\infty)$ there holds
\begin{align}\label{weighted}
 \int_{0}^{\infty} \frac{1}{\rho(s)} (\Delta v)^2 s^{N-1} \ ds & \geq \frac{(N-1)^4}{16} \int_{0}^{\infty} \rho(s) v^2 s^{N-1} \ ds\\\notag
& +\frac{9}{16} \int_{0}^{\infty} \frac{\rho(s)}{r^{4}(s)} v^2 s^{N-1} \ ds +
\frac{(N-1)^2}{8} \int_{0}^{\infty} \frac{\rho(s)}{r^{2}(s)} v^2 s^{N-1} \ ds,
\end{align}
  where $\rho(s) = \left(\frac{\sinh r(s)}{s} \right)^{2(N-1)}.$
\end{prop}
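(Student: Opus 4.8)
The plan is to obtain \eqref{weighted} directly from the Poincar\'e--Rellich inequality \eqref{PR} of Theorem \ref{TPR}, evaluated on radial functions, by pushing it forward through the change of variables \eqref{be1}; this substitution is tailored precisely so that the radial hyperbolic Laplacian is conjugated, up to the conformal factor $\rho$, to the radial Euclidean Laplacian.

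First I would record the nature of the substitution. Since $N\geq5$, the integrals $\int^{+\infty}(\sinh\tau)^{-(N-1)}\,d\tau$ and $\int^{+\infty}\sigma^{-(N-1)}\,d\sigma$ converge while $\int_{0}(\sinh\tau)^{-(N-1)}\,d\tau$ and $\int_{0}\sigma^{-(N-1)}\,d\sigma$ diverge, so \eqref{be1} (with the integration constant fixed so as to match the behaviour at $0$ and at $+\infty$) determines an increasing $C^{\infty}$ diffeomorphism $r\mapsto s(r)$ of $(0,+\infty)$ onto itself, with $s(r)\to0$ as $r\to0$, $s(r)\to+\infty$ as $r\to+\infty$, and $\frac{dr}{ds}=\frac{(\sinh r)^{N-1}}{s^{N-1}}$. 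Given $v\in C_c^{\infty}(0,+\infty)$ I would set $U(r):=v(s(r))$, so that $U\in C_c^{\infty}(0,+\infty)$ and the radial function $u(x):=U(\varrho(x,o))$ belongs to $C_c^{\infty}(\hn)$ (it vanishes near the pole $o$); hence \eqref{PR} applies to this $u$.

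Next I would reduce \eqref{PR} to one dimension and change variables. On radial functions $dv_{\hn}=\omega_{N-1}(\sinh r)^{N-1}\,dr$ with $\omega_{N-1}=|\mathbb{S}^{N-1}|$, so $\omega_{N-1}$ cancels on both sides and every term becomes an integral $\int_0^{+\infty}(\cdot)\,(\sinh r)^{N-1}\,dr$; since $N\geq5$ the coefficients $\tfrac{(N^2-1)(N-3)^2}{8}$ and $\tfrac{(N-1)(N-3)(N^2-4N-3)}{16}$ are nonnegative, so I would drop the $\sinh^{-2}r$ and $\sinh^{-4}r$ terms on the right, which only weakens the inequality. Then I would substitute $s\leftrightarrow r$: from $\frac{dr}{ds}=\frac{(\sinh r)^{N-1}}{s^{N-1}}$ one gets the single identity
\[
(\sinh r)^{N-1}\,dr=\frac{\bigl(\sinh r(s)\bigr)^{2(N-1)}}{s^{N-1}}\,ds=\rho(s)\,s^{N-1}\,ds ,
\]
which turns each zeroth-order integral $\int_0^{+\infty}f(r)\,U(r)^2(\sinh r)^{N-1}\,dr$ into $\int_0^{+\infty}f(r(s))\,v(s)^2\,\rho(s)\,s^{N-1}\,ds$; taking $f\equiv\tfrac{(N-1)^4}{16}$, then $f=\tfrac{(N-1)^2}{8}\,r^{-2}$, then $f=\tfrac{9}{16}\,r^{-4}$ produces exactly the three terms on the right of \eqref{weighted}. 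For the leading term I would invoke the conjugation formula recalled just before the statement, $\Delta_{\hn}U(r(s))=\bigl(\tfrac{s}{\sinh r(s)}\bigr)^{2(N-1)}\Delta v(s)=\rho(s)^{-1}\Delta v(s)$; squaring and using the same volume-element identity gives $\int_0^{+\infty}|\Delta_{\hn}U|^2(\sinh r)^{N-1}\,dr=\int_0^{+\infty}\rho(s)^{-1}(\Delta v)^2\,s^{N-1}\,ds$. Assembling the four identities yields \eqref{weighted}.

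No step here is genuinely difficult: once \eqref{PR} and the conformal conjugation of the Laplacian are available, the argument is bookkeeping. The only points that deserve care are checking that \eqref{be1} maps $(0,+\infty)$ diffeomorphically onto itself (so that compactly supported profiles go to compactly supported profiles and no boundary contributions arise), and keeping track of the powers of $\rho$: one factor $\rho$ comes from the volume element, and on the left-hand side two extra factors $\rho^{-1}$ come from squaring $\Delta_{\hn}U$, the net effect being the single weight $\rho^{-1}$ multiplying $(\Delta v)^2$ and the weight $\rho$ multiplying all remaining terms.
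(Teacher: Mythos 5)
Your proposal is correct and follows essentially the same route as the paper: apply \eqref{PR} to the radial function obtained from $v$ via the substitution \eqref{be1}, use the conjugation identity $\Delta_{\hn}U=\rho^{-1}\Delta v$ together with $(\sinh r)^{N-1}\,dr=\rho(s)\,s^{N-1}\,ds$ to rewrite each term, and discard the nonnegative $\sinh^{-2}r$ and $\sinh^{-4}r$ contributions. You in fact supply details the paper leaves implicit (the diffeomorphism property of \eqref{be1} and the explicit dropping of the $\sinh$-terms), so nothing is missing.
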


\begin{rem}\label{conj}
As the asymptotics performed here below reveal, when $v$ is supported in the complement of a large ball all the constants in \eqref{weighted} coincide with those of the optimal inequality obtained in \cite[Theorem 5.1-(iii)]{CM}. This observation suggests that also the constants found in \eqref{weighted} should be optimal. This cannot, however, be deduced from \cite{CM} since the weight $\rho$ is close to the homogeneous one considered in \cite{CM} only at infinity.
\end{rem}

\begin{proof}
From \eqref{be1} we have
\begin{equation*}\begin{aligned}
 \int_{\hn}{(\Delta_{\hn} u)^2} (\sinh r)^{N-1} \ dr & = \int_{0}^{\infty}
\frac{s^{4(N-1)}}{(\sinh r(s))^{4(N-1)}} (\Delta v)^2 (\sinh r(s))^{2(N-1)} \frac{1}{s^{N-1}} \ ds \notag \\
& = \int_{0}^{\infty} \frac{1}{\rho(s)} (\Delta v)^2 s^{N-1} \ ds.
\end{aligned}
\end{equation*}
\begin{align*}
 \int_{\hn} u^2 (\sinh r)^{N-1} \ dr & = \int_{0}^{\infty} u^2(r(s)) \frac{(\sinh r(s))^{2(N-1)}}{s^{2(N-1)}}
\frac{s^{(N-1)}}{(\sinh r(s))^{N-1}} s^{N-1} \ ds \notag \\
& = \int_{0}^{\infty} v^{2}(s) \rho(s) s^{N-1} \ ds.
\end{align*}
The other integrals in \eqref{PR} can be rewritten similarly. All these terms replaced in \eqref{PR} yields the thesis.

\end{proof}
Next we provide the asympotics of the transformation \eqref{be1}.

\begin{lem}\label{lems}
Let $s=s(r)$ be as given by \eqref{be1}. Then
\begin{equation*}
s(r) = c_{1} e^{\frac{N-1}{N-2} r} - c_{2} e^{-\frac{N-3}{N-2} r} + o(e^{-\frac{N-3}{N-2}r})\quad
\ \mbox{as} \ r \rightarrow \infty,
\end{equation*}
where $c_{1}$ and $c_{2}$ are positive constants.
\end{lem}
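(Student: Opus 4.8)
The plan is to integrate the defining relation \eqref{be1} explicitly, expand the resulting integral asymptotically as $r\to\infty$, and then invert. First I would observe that, since $\int_s^{\infty}\sigma^{-(N-1)}\,d\sigma=\frac{1}{(N-2)s^{N-2}}$ and the integral $G(r):=\int_r^{\infty}(\sinh t)^{-(N-1)}\,dt$ converges (because $(\sinh t)^{-(N-1)}\sim 2^{N-1}e^{-(N-1)t}$ at infinity), the relation \eqref{be1}, together with the normalization that $s\to\infty$ as $r\to\infty$ (which forces the constant of integration to vanish), integrates to
\[
\frac{1}{(N-2)\,s^{N-2}}=\int_r^{\infty}\frac{dt}{(\sinh t)^{N-1}}.
\]
As a consistency check, $G(r)\sim\frac{1}{(N-2)r^{N-2}}$ as $r\to 0$, so $s\sim r$ near the origin, matching the normalization implicitly used in Proposition \ref{onedimensional}.

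Next I would compute the asymptotics of $G(r)$. Writing $\sinh t=\tfrac12 e^{t}(1-e^{-2t})$ and expanding $(1-e^{-2t})^{-(N-1)}=1+(N-1)e^{-2t}+O(e^{-4t})$, a series which converges uniformly on $[r,\infty)$ once $r$ is large, one integrates term by term to obtain
\[
G(r)=\frac{2^{N-1}}{N-1}\,e^{-(N-1)r}\Bigl(1+\frac{(N-1)^2}{N+1}\,e^{-2r}+O(e^{-4r})\Bigr),\qquad r\to\infty.
\]

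Finally I would invert. From the identity above, $s^{N-2}=\frac{N-1}{(N-2)2^{N-1}}\,e^{(N-1)r}\bigl(1-\frac{(N-1)^2}{N+1}e^{-2r}+O(e^{-4r})\bigr)$, and taking $(N-2)$-th roots via $(1+x)^{1/(N-2)}=1+\frac{x}{N-2}+O(x^2)$ gives
\[
s(r)=c_1\,e^{\frac{N-1}{N-2}r}\Bigl(1-\frac{(N-1)^2}{(N-2)(N+1)}\,e^{-2r}+O(e^{-4r})\Bigr),
\]
with $c_1:=\bigl(\frac{N-1}{(N-2)2^{N-1}}\bigr)^{1/(N-2)}>0$. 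Since $\frac{N-1}{N-2}-2=-\frac{N-3}{N-2}$, this is precisely the claimed expansion, with $c_2:=c_1\frac{(N-1)^2}{(N-2)(N+1)}>0$; the remainder is $O\bigl(e^{(\frac{N-1}{N-2}-4)r}\bigr)=o\bigl(e^{-\frac{N-3}{N-2}r}\bigr)$ because $\frac{N-1}{N-2}-4<-\frac{N-3}{N-2}$ whenever $N>2$.

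I do not expect a genuine obstacle here: the argument is an explicit integration followed by a routine asymptotic inversion. The points requiring a little care are pinning down the integration constant from the correct boundary behavior of the change of variables, justifying the term-by-term integration of the binomial expansion, and tracking the exponents $\frac{N-1}{N-2}-2$ and $\frac{N-1}{N-2}-4$ to confirm that the displayed correction term genuinely dominates the remainder.
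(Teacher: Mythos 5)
Your proof is correct and follows essentially the same route as the paper: both integrate \eqref{be1} to get $s^{N-2}=\bigl((N-2)\int_r^{\infty}(\sinh t)^{-(N-1)}\,dt\bigr)^{-1}$, expand the tail integral (the paper via the substitution $y=e^{-\sigma}$, you via the equivalent binomial expansion of $(1-e^{-2t})^{-(N-1)}$), and invert; your constants $c_1$ and $c_2$ agree with theirs.
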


\begin{proof}
 From the trasformation \eqref{be1}, we have
\begin{align*}
 s(r) = \frac{(N-2)^{-\frac{1}{N-2}}}{2^{\frac{N-1}{N-2}}} \left( \int_{r}^{\infty}
(e^\sigma - e^{-\sigma})^{-N + 1} \ d\sigma \right)^{-\frac{1}{N-2}}.
\end{align*}
Notice that, as $r \rightarrow \infty,$

\begin{align*}
 \int_{r}^{\infty} (e^{2 \sigma} - 1)^{-N + 1} & e^{\sigma(N-1)} \ d\sigma = \int^{e^{-r}}_{0} (1- y^2)^{-N + 1} y^{N-2} \ dy \\
& = \int_{0}^{e^{-r}} \left[ y^{N-2} + (N-1) y^{N}+ o(y^{N}) \right] \ dy\\
&= \frac{e^{-r(N-1)}}{(N-1)} + \frac{(N-1)}{(N+1)} e^{-r(N+1)}  + o(e^{-r(N+1)}).
\end{align*}
Hence, as $r \rightarrow \infty,$

\begin{align*}
 s(r) & = \frac{(N-2)^{-\frac{1}{N-2}}}{2^{\frac{N-1}{N-2}}} \left[ \frac{e^{-r(N-1)}}{(N-1)} + \frac{N-1}{N+1} e^{-r(N+1)} + o(e^{-r(N+1)}) \right]^{\frac{-1}{N-2}} \\
& = c_1 e^{r\frac{N-1}{N-2}} \left[ 1 -
\frac{(N-1)^2}{(N+1)(N-2)} e^{- 2r} + o(e^{-2r}) \right]\\
& = c_1 e^{r\frac{N-1}{N-2}}
\left[ 1 - \frac{(N-1)^2}{(N+1)(N-2)} e^{-2r} + o(e^{-2r}) \right],
\end{align*}
where $$c_1:= \left(\frac{N-1}{2^{N-1}(N-2)}\right)^{\frac{1}{N-2}}$$

 This proves the lemma setting
$$c_{2} := \frac{(N-1)^2\,c_1}{(N+1)(N-2)}.$$
\end{proof}

Next we need the precise asymptotics of $\rho.$

\begin{lem}\label{lemrho}
 Let $\rho$ be defined as in Proposition \ref{onedimensional}, then

\begin{equation*}
 \rho(s) := \left( \frac{\sinh r(s)}{s} \right)^{2(N-1)} = \left(2c_{1}\right)^{-2N + 2} e^{-2\frac{N-1}{N-2}r(s)} \left( 1 + k_{1} e^{-2r(s)} + o(e^{-2r(s)} \right), \ \mbox{as} \ s \rightarrow \infty,
\end{equation*}
 where $k_{1} = \frac{2(N-1)(c_{2} - c_{1})}{c_{1}}$ and $c_{1}, c_{2}$ are as in the previous lemma.
\end{lem}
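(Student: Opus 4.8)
The plan is to substitute the expansion of $s(r)$ from Lemma~\ref{lems} directly into the definition of $\rho$ and track all terms up to order $e^{-2r(s)}$. First I would rewrite the conclusion of Lemma~\ref{lems} in factored form by pulling out the leading exponential:
\[
s(r) = c_1 e^{\frac{N-1}{N-2}r}\left(1 - \frac{c_2}{c_1}\,e^{-\frac{N-3}{N-2}r-\frac{N-1}{N-2}r} + o(e^{-2r})\right) = c_1 e^{\frac{N-1}{N-2}r}\left(1 - \frac{c_2}{c_1}\,e^{-2r} + o(e^{-2r})\right),
\]
where the key arithmetic step is the identity $\frac{N-3}{N-2}+\frac{N-1}{N-2}=2$. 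In the same spirit I would write $\sinh r = \tfrac12 e^{r}\bigl(1-e^{-2r}\bigr)$ and note that $e^{r}/e^{\frac{N-1}{N-2}r} = e^{-\frac{1}{N-2}r}$, since $1-\frac{N-1}{N-2}=-\frac1{N-2}$. Combining these two expansions gives
\[
\frac{\sinh r}{s} = \frac{1}{2c_1}\,e^{-\frac{1}{N-2}r}\cdot\frac{1-e^{-2r}}{\,1-\frac{c_2}{c_1}e^{-2r}+o(e^{-2r})\,}.
\]

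Next I would expand the last quotient with a geometric series, $\bigl(1-\tfrac{c_2}{c_1}e^{-2r}+o(e^{-2r})\bigr)^{-1} = 1+\tfrac{c_2}{c_1}e^{-2r}+o(e^{-2r})$, and multiply out to obtain
\[
\frac{1-e^{-2r}}{\,1-\frac{c_2}{c_1}e^{-2r}+o(e^{-2r})\,} = 1 + \frac{c_2-c_1}{c_1}\,e^{-2r} + o(e^{-2r}),
\]
so that $\dfrac{\sinh r}{s} = \dfrac{1}{2c_1}\,e^{-\frac{1}{N-2}r}\bigl(1+\frac{c_2-c_1}{c_1}e^{-2r}+o(e^{-2r})\bigr)$. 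Finally, raising this identity to the power $2(N-1)$ and using $(1+x)^{2(N-1)} = 1+2(N-1)x+o(x)$ as $x\to 0$ yields
\[
\rho(s) = (2c_1)^{-2N+2}\,e^{-2\frac{N-1}{N-2}r(s)}\left(1 + \frac{2(N-1)(c_2-c_1)}{c_1}\,e^{-2r(s)} + o(e^{-2r(s)})\right),
\]
which is the claimed formula with $k_1 = \frac{2(N-1)(c_2-c_1)}{c_1}$. Here I use that $r(s)\to\infty$ as $s\to\infty$ (a consequence of \eqref{be1}), so the remainder terms expressed in $r$ translate into remainder terms as $s\to\infty$.

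There is no genuine obstacle here: the argument is a routine manipulation of asymptotic expansions, and it is what motivated retaining the second-order term in Lemma~\ref{lems}. The only points requiring care are the two exponent computations $\frac{N-3}{N-2}+\frac{N-1}{N-2}=2$ and $1-\frac{N-1}{N-2}=-\frac1{N-2}$ — which are exactly what make the correction term appear precisely at order $e^{-2r}$ — together with the bookkeeping of the $o(e^{-2r})$ remainders when inverting the denominator and when taking the $2(N-1)$-th power; both of these are standard since every factor involved is of the form $1+O(e^{-2r})$.
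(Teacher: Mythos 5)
Your proposal is correct and follows essentially the same route as the paper: both arguments substitute the expansion of $s(r)$ from Lemma \ref{lems} together with $\sinh r=\tfrac12 e^{r}(1-e^{-2r})$ into $\rho=(\sinh r/s)^{2(N-1)}$ and track the $e^{-2r}$ correction via the binomial/geometric expansions, arriving at $k_1=\frac{2(N-1)(c_2-c_1)}{c_1}$. The only cosmetic difference is that you combine $\sinh r$ and $s$ into a single factor before raising to the power $2(N-1)$, whereas the paper expands $(\sinh r)^{2N-2}$ and $s^{-(2N-2)}$ separately and multiplies; the bookkeeping is identical.
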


\begin{proof}
By Lemma \ref{lems},
 \begin{align*}
  \rho(s) & = \frac{e^{(2N-2)r(s)}}{2^{2N-2}}\left[ \left( 1 - (2N-2)e^{-2r(s)} + o(e^{-2r(s)}) \right) \right. \\
& \left. \left( c_{1} e^{\frac{N-1}{N-2} r(s)} - c_{2} e^{-\frac{N-3}{N-2}r(s)} +
o(e^{-\frac{N-3}{N-2}r(s)}) \right)^{-2N + 2} \right]\\
& = \frac{c_{1}^{-2N + 2}}{2^{2N-2}} e^{-2\frac{N-1}{N-2}r(s)}
 \left( 1 + k_{1} e^{-2r(s)} + o(e^{-2r(s)} \right),
 \end{align*}
this proving the claim.
\end{proof}

Now, following the idea in the proof of \cite[Theorem 5.5]{CM} we can state
\begin{prop}
 If $A \in \mathbb{R}$ is such that
\begin{align}\label{onedimensional1}
 \int_{0}^{\infty} \frac{1}{\rho(s)} (\Delta v)^2 s^{N-1} \ ds &
\geq \frac{(N-1)^4}{16} \int_{0}^{\infty} \rho(s) v^2 s^{N-1} \ ds +A \int_{0}^{\infty} \frac{\rho(s)}{r^{2}(s)} v^2 s^{N-1} \ ds
\end{align}
for every $v \in C_c^{\infty}(0, \infty),$ then $A \leq \frac{(N-1)^2}{8}.$

\end{prop}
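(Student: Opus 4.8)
The plan is to transfer \eqref{onedimensional1} back to a radial inequality on $\hn$ and then to a one-dimensional inequality on the half-line, for which sharpness of $\frac{(N-1)^2}{8}$ will be a direct consequence of the classical one-dimensional Hardy inequality $\int_0^\infty(W')^2\,dr\ge\frac14\int_0^\infty\frac{W^2}{r^2}\,dr$. First I would undo the change of variables \eqref{be1}: setting $U(r):=v(s(r))$, the identities established in the proof of Proposition \ref{onedimensional} show that, for $v\in C_c^\infty(0,\infty)$, the hypothesis \eqref{onedimensional1} is equivalent to
\[
\int_0^\infty\bigl(U''+(N-1)\coth r\,U'\bigr)^2(\sinh r)^{N-1}\,dr\ge\frac{(N-1)^4}{16}\int_0^\infty U^2(\sinh r)^{N-1}\,dr+A\int_0^\infty\frac{U^2}{r^2}(\sinh r)^{N-1}\,dr
\]
for all $U\in C_c^\infty(0,\infty)$. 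Next I would apply the ground-state substitution $U=(\sinh r)^{-(N-1)/2}W$. Exactly as in the computation preceding \eqref{trans1}, restricted to radial functions so that the spherical term drops out, one has $(\Delta_{\hn}U)^2(\sinh r)^{N-1}=(W''-Q(r)W)^2$ with $Q(r)=\frac{(N-1)^2}{4}+\frac{(N-1)(N-3)}{4\sinh^2 r}$, together with $U^2(\sinh r)^{N-1}=W^2$ and $\frac{U^2}{r^2}(\sinh r)^{N-1}=\frac{W^2}{r^2}$; hence \eqref{onedimensional1} is equivalent to
\[
\int_0^\infty\bigl(W''-Q(r)W\bigr)^2\,dr\ge\frac{(N-1)^4}{16}\int_0^\infty W^2\,dr+A\int_0^\infty\frac{W^2}{r^2}\,dr,\qquad W\in C_c^\infty(0,\infty).
\]

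Expanding the square and integrating by parts (using $-2\int QW''W=2\int Q(W')^2-\int Q''W^2$, with boundary terms vanishing) and $Q^2=\frac{(N-1)^4}{16}+\frac{(N-1)^3(N-3)}{8\sinh^2 r}+\frac{(N-1)^2(N-3)^2}{16\sinh^4 r}$, the leading term $\frac{(N-1)^4}{16}\int W^2$ cancels and the inequality becomes
\[
\int_0^\infty(W'')^2\,dr+2\int_0^\infty Q(W')^2\,dr+\int_0^\infty E(r)\,W^2\,dr\ge A\int_0^\infty\frac{W^2}{r^2}\,dr,
\]
where $E(r):=\frac{(N-1)^3(N-3)}{8\sinh^2 r}+\frac{(N-1)^2(N-3)^2}{16\sinh^4 r}-Q''(r)=O(e^{-2r})$ as $r\to\infty$, and $Q(r)\to\frac{(N-1)^2}{4}$ exponentially fast.

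I would then test this with functions concentrated far out at infinity that nearly saturate the one-dimensional Hardy inequality: for large $R$, take $W_R$ supported in $(R,R^2)$, equal to $r^{1/2}$ on the bulk $(2R,R^2/2)$, with smooth transitions of cost $O(1)$ near the endpoints, so that as $R\to\infty$ one has $\int(W_R')^2\,dr=\frac14\log R+O(1)$, $\int\frac{W_R^2}{r^2}\,dr=\log R+O(1)$, $\int(W_R'')^2\,dr=O(R^{-2})$, while $\int E(r)W_R^2\,dr\to0$ and $\int\bigl(Q(r)-\frac{(N-1)^2}{4}\bigr)(W_R')^2\,dr\to0$ because the integrands are exponentially small on $(R,\infty)$. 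Plugging in, the left-hand side equals $\frac{(N-1)^2}{8}\log R+O(1)$ and the right-hand side equals $A\log R+O(1)$; if $A>\frac{(N-1)^2}{8}$ this is violated for $R$ large, contradicting \eqref{onedimensional1}. Hence $A\le\frac{(N-1)^2}{8}$.

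The main point to get right is this last step: one must check simultaneously that the profile $r^{1/2}$ on a long logarithmic window near infinity drives $\int(W_R')^2$ to the optimal Hardy ratio $\frac14\int W_R^2/r^2$, that $\int(W_R'')^2$ is genuinely of lower order (in fact $o(1)$) and so contributes nothing, and that the exponentially decaying potential $E$ and the difference $Q-\frac{(N-1)^2}{4}$ are negligible in the limit — it is precisely this balance that pins the constant to exactly $\frac{(N-1)^2}{8}$. Alternatively one could argue directly in the variable $s$, feeding the asymptotics of $s(r)$ and $\rho(s)$ from Lemmas \ref{lems}--\ref{lemrho} into a suitable family of test functions $v(s)$, but the reduction above keeps the one-dimensional Hardy mechanism transparent.
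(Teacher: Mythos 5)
Your argument is correct, and it reaches the same conclusion by a genuinely different route. The paper stays in the Euclidean variable $s$: it substitutes $v(s)=s^{-(N-2)/2}w(-\log s)$, rescales $w(\sigma)=z(t\sigma)$ and sends $t\to0$, feeding in the asymptotic expansions of $s(r)$ and $\rho(s)$ from Lemmas \ref{lems}--\ref{lemrho} so that the rescaled functional converges to $\frac{(N-1)^2}{2}\int_{-\infty}^0 (z')^2\,dx - A\int_{-\infty}^0 z^2|x|^{-2}\,dx\ge 0$, after which the sharp one-dimensional Hardy constant $1/4$ gives $A\le (N-1)^2/8$. You instead undo the change of variables \eqref{be1}, perform the ground-state substitution $U=(\sinh r)^{-(N-1)/2}W$ (the same reduction used in Section \ref{ineq} for the $n=0$ mode, so the identity $(\Delta_{\hn}U)^2(\sinh r)^{N-1}=(W''-QW)^2$ with $Q=\frac{(N-1)^2}{4}+\frac{(N-1)(N-3)}{4\sinh^2 r}$ is already available), and then plug in explicit near-optimizers of the one-dimensional Hardy inequality supported in $(R,R^2)$. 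Your computation of the various contributions is right: the transition layers cost $O(1)$ in $\int (W')^2$ and $\int W^2/r^2$, the $(W'')^2$ term is $O(R^{-2})$, and both $E$ and $Q-\frac{(N-1)^2}{4}$ are $O(e^{-2r})$ on the support, so the balance $2\cdot\frac{(N-1)^2}{4}\cdot\frac14=\frac{(N-1)^2}{8}$ comes out cleanly. What each approach buys: the paper's scaling argument avoids constructing test functions and delegates sharpness entirely to the known Hardy infimum on the half-line, at the price of carrying the precise asymptotics of $\rho$ and $r(s)$ through the limit; your version bypasses Lemmas \ref{lems}--\ref{lemrho} altogether and makes visible where each factor of the constant originates, at the price of verifying the error estimates for an explicit family of quasi-extremizers. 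Both ultimately test the inequality at spatial infinity in the hyperbolic variable, which is where the constant $\frac{(N-1)^2}{8}$ is determined.
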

\begin{proof}
 Set

\[
 v(s) = s^{-\frac{(N-2)}{2}}w(- \log s),
\]
where $v \in C_c^{\infty}(0, \infty) \Leftrightarrow w \in C_c^{\infty}(-\infty, \infty).$ We compute

\begin{equation}\label{be7}
\Delta v(s) = s^{-\frac{(N+2)}{2}} \left[ w^{\prime \prime}(- \log s) - \frac{(N-2)^2}{4} w(- \log s) \right].
\end{equation}
Inserting \eqref{be7} in \eqref{onedimensional1}, we get

\begin{equation*}
\begin{aligned}
& \int_{0}^{\infty} \frac{s^{-3}}{\rho(s)}  \left[ (w^{{\prime \prime}}(-\log s))^2 +
\left( \frac{N-2}{2} \right)^4 w^2(- \log s) -
2 \left( \frac{N-2}{2} \right)^2 w^{\prime \prime}(-\log s) w(- \log s) \right]  ds \\
& \geq \frac{(N-1)^4}{16} \int_{0}^{\infty} \rho(s) s\, w^2(-\log s) \, ds
+ A \int_{0}^{\infty} \frac{\rho(s)\,s}{r^{2}(s)} \,w({-\log s})  \ ds.\\
\end{aligned}
\end{equation*}
Now substituting $\sigma = - \log s$ we have

\begin{align*}
 & \int_{-\infty}^{\infty} (w^{\prime \prime}(\sigma))^2 \frac{1}{\rho(e^{-\sigma})} e^{2 \sigma} \ d\sigma
+ \left(\frac{N-2}{2}\right)^4 \int_{-\infty}^{\infty} w^2(\sigma) \frac{1}{\rho(e^{-\sigma})} e^{2 \sigma} \ d\sigma \\
&- 2 \left(\frac{N-2}{2} \right)^2 \int_{-\infty}^{\infty} w^{\prime \prime}(\sigma) w(\sigma)
\frac{1}{\rho(e^{-\sigma})} e^{2 \sigma} \ d\sigma \\
& \geq \left( \frac{N-1}{2} \right)^4 \int_{-\infty}^{\infty} w^2(\sigma) \rho(e^{-\sigma}) e^{-2\sigma} \ d \sigma + A \int_{-\infty}^{\infty} w^{2}(\sigma) \frac{\rho(e^{-\sigma})}{r^{2}(e^{-\sigma})} e^{-2\sigma} \ d\sigma,
\end{align*}
for all $w \in C_{c}^{\infty}(-\infty, \infty).$ As above inequality holds true also for $w(\sigma) = z(t\sigma),$ for all
$t > 0$ and $z \in C_{c}^{\infty}(-\infty, 0),$ we obtain

\begin{align*}
& \int_{-\infty}^{0} \left[ t^4 (z^{\prime \prime}(x))^2 + \left( \frac{N-2}{2} \right)^4 z^2(x) -
2 \left( \frac{N-2}{2} \right)^2 t^2 z^{\prime \prime}(x) z(x) \right]
\frac{1}{\rho(e^{\frac{|x|}{t}})\,e^{2 \frac{|x|}{t}}} \ dx \\
& \geq \left( \frac{N-1}{2} \right)^4 \int_{-\infty}^{0} \rho(e^{\frac{|x|}{t}})e^{2 \frac{|x|}{t}} z^{2}(x) \ dx
 + A \int_{-\infty}^{0} \rho(e^{\frac{|x|}{t}})\,e^{2 \frac{|x|}{t}} \frac{z^{2}(x)}{r^{2}(e^{ \frac{|x|}{t}})} \ dx.
\end{align*}
If $x \in K \subset (-\infty, 0),$ where $K$ is a compact set, then $\frac{|x|}{t} \rightarrow \infty$ uniformly as $t \rightarrow 0$.
Since, by Lemma \ref{lems} and Lemma \ref{lemrho}, as $s \rightarrow \infty$
\[
 r(s) = \log \left( \frac{s}{c_{1}} \right)^{\frac{N-2}{N-1}} + o(1)\,,
\]
and
$$\rho(s) s^2=2^{2-2N}c_1^{4-2N}\left(1-\frac{4(N-1)}{N+1}\,e^{-2r(s)}+o(e^{-2r(s)}) \right)\,,$$
the above inequality yields

\begin{align*}
 & t^4\,\frac{c_{1}^{2N - 4}}{2^{2- 2N}}\,\int_{- \infty}^{0} (z^{\prime \prime})^2 \,
\left( 1 + \frac{4(N-1)}{N+1} \left( \frac{e^{-2\frac{|x|}{t}}}{c_{1}}
\right)^{\frac{N-2}{N-1}} + o(e^{\frac{-2(N-2)}{N-1}\frac{|x|}{t}}) \right)  \ dx  \\
& - t^2 \left[ \frac{(N-1)^2}{2}\int_{-\infty}^{0} z^{\prime \prime} z \left( 1 +  \frac{4(N-1)}{N+1} \left( \frac{e^{-2\frac{|x|}{t}}}{c_{1}}
\right)^{\frac{N-2}{N-1}} +
o(e^{\frac{-2(N-2)}{N-1}\frac{|x|}{t}}) \right)\,dx \right. \\
& \left. + A \int_{-\infty}^{0} \frac{z^2}{|x|^2} \left( 1 - \frac{4(N-1)}{N+1}  \left( \frac{e^{-2\frac{|x|}{t}}}{c_{1}}
\right)^{\frac{N-2}{N-1}} +
o(e^{\frac{-2(N-2)}{N-1}\frac{|x|}{t}}) \right) \ dx \right] \\
& + \frac{(N-2)^2 (N-1)^2}{2^4}\left[ \int_{-\infty}^{0} z^2 \left(  \frac{8(N-1)}{N+1}  \left( \frac{e^{-2\frac{|x|}{t}}}{c_{1}}
\right)^{\frac{N-2}{N-1}}+
o(e^{\frac{-2(N-2)}{N-1}\frac{|x|}{t}}) \right) \ dx  \right]  \geq 0\,.
\end{align*}
Hence, as $t \rightarrow 0$  and
integrating by parts, we obtain

\begin{align*}
 \frac{(N-1)^2}{2} \int_{-\infty}^{0} (z^{\prime}(x))^2 \ dx \geq A \int_{-\infty}^{0} \frac{z^2}{|x|^2} \ dx.
\end{align*}
Hence,
\[
 \frac{A}{\frac{(N-1)^2}{2}} \leq \inf_{v \in C_c^{\infty}(-\infty, 0)}
\frac{\int_{-\infty}^{0} |z^{\prime}|^2}{\int_{-\infty}^{0} |x|^{-2} |z|^2} = \frac{1}{4}
\]
and we conclude.

\end{proof}

\section{Proof of Corollary \ref{corHALF1} and Corollary \ref{cor2}}

\emph{Proof of Corollary \ref{corHALF1}.}\par
\smallskip\par
From the transformation \eqref{halftransformation} and since $r=d((x,y),(0,1))$, we obtain that

\[
\int_{\mathbb{H}^{N}} u^2 \ dv_{\mathbb{H}^{N}} = \int_{\mathbb{R}^{+}} \int_{\mathbb{R}^{N-1}} \frac{v^2}{y^2} \ dx \ dy \,,\quad \int_{\mathbb{H}^{N}} \frac{u^2}{r^2} \ dv_{\mathbb{H}^{N}} = \int_{\mathbb{R}^{+}} \int_{\mathbb{R}^{N-1}} \frac{v^2}{y^2 d^2} \ dx \ dy
\]
and
\[
\int_{\mathbb{H}^{N}} |\nabla_{\mathbb{H}^{N}} u|^2 \ dv_{\mathbb{H}^{N}} =  \int_{\mathbb{R}^{+}} \int_{\mathbb{R}^{N-1}} |\nabla v|^2 \ dx \ dy +
\left( \frac{(N-1)^2}{4} - \frac{1}{4} \right)   \int_{\mathbb{R}^{+}} \int_{\mathbb{R}^{N-1}} \frac{v^2}{y^2} \ dx \ dy
\]

Inserting the above identities into \eqref{poincare}, \eqref{mazya1} follows at once together with the optimality of the constants that comes from those in \eqref{poincare}.\qed
\par
\medskip\par
\emph{Proof of Corollary \ref{mazya}}
\par
\smallskip\par

For all $u \in C_c^{\infty}(\mathbb{H}^{N}),$ we replace transformation \eqref{halftransformation} with
$$v(x,y):= |y|^{-\frac{N-2}{2}} u(x,|y|), \ \ x \in \mathbb{R}^{N-1}, y \in \mathbb{R}^{k}\,.$$
Hence, $v\in C_c^{\infty}( \mathbb{R}^{N+k-1})$ has cylindrical symmetry and compact support in $\mathbb{R}^{N+k-1} \setminus \mathbb{R}^{N-1}$. Then, the same density argument of \cite[Appendix B]{mancini} allows to conclude that
\[
\omega_k\int_{\mathbb{H}^{N}} u^2 \ dv_{\mathbb{H}^{N}} = \int_{\mathbb{R}^{k}} \int_{\mathbb{R}^{N-1}} \frac{v^2}{y^2} \ dx \ dy \,,\quad \omega_k\int_{\mathbb{H}^{N}} \frac{u^2}{r^2} \ dv_{\mathbb{H}^{N}} = \int_{\mathbb{R}^{k}} \int_{\mathbb{R}^{N-1}} \frac{v^2}{y^2 d^2} \ dx \ dy
\]
and
\[
\omega_k \int_{\mathbb{H}^{N}} |\nabla_{\mathbb{H}^{N}} u|^2 \ dv_{\mathbb{H}^{N}} =  \int_{\mathbb{R}^{k}} \int_{\mathbb{R}^{N-1}} |\nabla v|^2 \ dx \ dy +
\left( \frac{(N-1)^2}{4} - \frac{1}{4} \right)  \int_{\mathbb{R}^{k}} \int_{\mathbb{R}^{N-1}} \frac{v^2}{y^2} \ dx \ dy \,,
\]
for all $v=v(x,y) \in C_c^{\infty}(\mathbb{R}^{N-1} \times \mathbb{R}^{k}),$ with $v(x,0)=0$ if $k=1$, where $\omega_k$ is the volume of the $k$ dimensional unit sphere. Using the above identities in \eqref{poincare} yields the claim.

\qed

\par
\medskip\par
\emph{Proof of Corollary \ref{cor2}}
\par
\smallskip\par
Before going further we state the following

\begin{lem}\label{Laconformal}
Let $ N \geq 3.$ For $u \in C_c^{\infty}(\mathbb{H}^{N}),$ define $v(x, y) := y^{\alpha} u(x,y), \ (x,y) \in \mathbb{R}^{N-1} \times \mathbb{R}^{+},$ then
\begin{equation*}
\Delta_{\mathbb{H}^{N}} u = y^{\alpha + 2} \Delta v + (2 \alpha - (N-2)) y^{\alpha} \frac{\partial v}{\partial y}  + \alpha (\alpha - (N-1)) y^{\alpha} v\,.
\end{equation*}

\end{lem}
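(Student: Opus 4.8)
The statement is a pointwise (purely algebraic) identity between second--order differential operators, so the plan is simply to verify it by a direct computation, starting from the explicit form of $\Delta_{\mathbb{H}^N}$ in the upper half--space model. Recall that on $\mathbb{R}^N_+=\mathbb{R}^{N-1}\times\mathbb{R}^+$ endowed with the metric $\delta_{ij}/y^2$ one has $\sqrt{|g|}=y^{-N}$ and $g^{ij}=y^2\delta_{ij}$, so that for any $C^2$ function $f$
\[
\Delta_{\mathbb{H}^N} f=\frac{1}{\sqrt{|g|}}\,\partial_i\bigl(\sqrt{|g|}\,g^{ij}\partial_j f\bigr)=y^2\,\Delta f-(N-2)\,y\,\partial_y f,
\]
where $\Delta=\Delta_x+\partial_{yy}$ denotes the Euclidean Laplacian on $\mathbb{R}^{N-1}\times\mathbb{R}^+$. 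This is the same formula already implicitly underlying the transformations \eqref{halftransformation} and \eqref{higherorder}.

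Next I would invert the defining relation, writing $u$ as $v$ times the appropriate power of $y$, and substitute this into the formula above. Using the product rule one expands $\partial_y u$ and $\partial_{yy}u$, keeping the factor $y^{\pm\alpha}$ outside $\Delta_x$ since the latter differentiates only the $x$--variables; I would then regroup the outcome into the three types of terms that can occur, namely those proportional to $\Delta v$, those proportional to $\partial_y v$, and those proportional to $v$. The factor $y^2$ in front of $\Delta u$ and the single factor $y$ in front of $\partial_y u$ fix the powers of $y$ attached to each of these three terms, while the coefficient of the zeroth--order term collects the second--order contribution of $\partial_{yy}(y^{\pm\alpha})$ together with that of $-(N-2)y\,\partial_y(y^{\pm\alpha})$; a short simplification turns this combination into the stated factor $\alpha\bigl(\alpha-(N-1)\bigr)$, and likewise the first--order coefficient collapses to $2\alpha-(N-2)$. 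Collecting everything yields the claimed identity.

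There is no genuine obstacle here: the only thing requiring attention is the bookkeeping of the powers of $y$ and of the combinatorial coefficients produced by differentiating $y^{\pm\alpha}$ twice. Once the lemma is available, Corollary \ref{cor2} follows by squaring its right--hand side, integrating against $dv_{\mathbb{H}^N}=y^{-N}\,dx\,dy$, specializing to the two admissible exponents $\alpha=(N-4)/2$ and $\alpha=(N-2)/2$, and integrating the mixed $v\,\partial_y v$--terms by parts to reduce them to zeroth--order integrals; feeding the resulting identities into \eqref{PR} produces \eqref{HALFrellich1}--\eqref{HALFrellich2}, the optimality of the constants being inherited from that asserted in Theorem \ref{TPR}.
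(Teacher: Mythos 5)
Your proposal is correct and follows essentially the same route as the paper, which simply invokes the half-space expression $\Delta_{\mathbb{H}^{N}} = y^2 \Delta - (N-2)\, y\, \partial_y$ and expands by the product rule. Note only that the stated coefficients $2\alpha-(N-2)$ and $\alpha(\alpha-(N-1))$ emerge from the substitution $u=y^{\alpha}v$ (consistent with \eqref{higherorder}, where $v=y^{-\alpha}u$), so the sign convention in the lemma's displayed definition of $v$ is a typo of the paper rather than an issue with your computation.
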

\begin{proof}
The proof follows by considering hyperbolic space as upper half space model $\mathbb{R}^{N}_{+} = \{ (x, y) \in \mathbb{R}^{N-1} \times \mathbb{R}^{+} \} $ endowed with the Riemannian metric $\frac{\delta_{ij}}{y^2}$ and using the explicit expression of Laplacian in these coordinates, namely
$\Delta_{\mathbb{H}^{N}} = y^2 \Delta - (N-2) y \frac{\partial}{\partial y}  $.
\end{proof}
\emph{Proof of \eqref{HALFrellich1}.} Let $u \in C_c^{\infty}(\mathbb{H}^{N})$, from the transformation \eqref{higherorder} and Lemma \ref{Laconformal} with $\alpha=(N-2)/2$, we deduce that
$$
\int_{\mathbb{H}^{N}} u^2 \ dv_{\mathbb{H}^{N}} = \int_{\mathbb{R}^{+}} \int_{\mathbb{R}^{N-1}} \frac{v^2}{y^2} \ dx \ dy \,,
$$
$$\int_{\mathbb{H}^{N}} \frac{u^2}{r^2} \ dv_{\mathbb{H}^{N}} = \int_{\mathbb{R}^{+}} \int_{\mathbb{R}^{N-1}} \frac{v^2}{y^2 d^2} \ dx \ dy\,,\quad \int_{\mathbb{H}^{N}} \frac{u^2}{r^4} \ dv_{\mathbb{H}^{N}} = \int_{\mathbb{R}^{+}} \int_{\mathbb{R}^{N-1}} \frac{v^2}{y^2 d^4} \ dx \ dy
$$
and
\begin{align*}\int_{\mathbb{H}^{N}} |\Delta_{\mathbb{H}^{N}} u|^2 \ dv_{\mathbb{H}^{N}} &=  \int_{\mathbb{R}^{+}} \int_{\mathbb{R}^{N-1}} y^2|\Delta v|^2 \ dx \ dy + \frac{N(N-2)}{2}  \int_{\mathbb{R}^{+}} \int_{\mathbb{R}^{N-1}} |\nabla v|^2 \ dx \ dy\\&+
\frac{N^2(N-2)^2}{16}   \int_{\mathbb{R}^{+}} \int_{\mathbb{R}^{N-1}} \frac{v^2}{y^2} \ dx \ dy
\end{align*}
where $r=d((x,y),(0,1))$. The above identities inserted into \eqref{PR} yields \eqref{HALFrellich1}.
Next we turn to the optimality issues. Assume by contradiction that  the following inequality holds $$\int_{\mathbb{R}^{+}} \int_{\mathbb{R}^{N-1}} \left( y^2 (\Delta v)^2 +
c |\nabla v|^2 \right) \ dx \ dy
 \geq \frac{2N^2 - 4N +1}{16} \int_{\mathbb{R}^{+}} \int_{\mathbb{R}^{N-1}} \frac{v^2}{y^2} \ dx \ dy $$
for all $ u \in C^{\infty}_{c}(\mathbb{H}^{N})$ with $c< \frac{N(N-2)}{2}$. The above inequality, jointly with \eqref{halftransformation} and \eqref{mazya1}, yields
$$\int_{\mathbb{H}^{N}} |\Delta_{\mathbb{H}^{N}} u|^2 \ dv_{\mathbb{H}^{N}}\geq\frac{(N -1)^2}{16} \int_{\mathbb{H}^{N}} u^2 \ dv_{\mathbb{H}^{N}}+\left(\frac{N(N-2)}{2}-c\right)  \int_{\mathbb{R}^{+}} \int_{\mathbb{R}^{N-1}} |\nabla v|^2 \ dx \ dy$$
$$\geq\frac{(N -1)^2}{16} \int_{\mathbb{H}^{N}} u^2 \ dv_{\mathbb{H}^{N}}+\frac{1}{4}  \left(\frac{N(N-2)}{2}-c\right)  \int_{\mathbb{R}^{+}} \int_{\mathbb{R}^{N-1}} \frac{v^2}{y^2} \ dx \ dy$$
$$=\left(\frac{(N -1)^2}{16}+\frac{1}{4}  \left(\frac{N(N-2)}{2}-c\right) \right) \int_{\mathbb{H}^{N}} u^2 \ dv_{\mathbb{H}^{N}}\,,$$
a contradiction with \eqref{poin}. The optimality of the other constants follows straightforwardly from what remarked above. \par
\emph{Proof of \eqref{HALFrellich2}.} Let $u \in C_c^{\infty}(\mathbb{H}^{N}).$ One could proceed by using a change of variable in \eqref{HALFrellich1}, but we give a short proof using the method just used above. By exploiting the transformation \eqref{higherorder} with $\alpha=(N-4)/4$, we have

\[
\int_{\mathbb{H}^{N}}  u^2 \ dv_{\mathbb{H}^{N}} = \int_{\mathbb{R}^{+}} \int_{\mathbb{R}^{N-1}} \frac{v^2}{y^4} \ dx \ dy\,
\]
and
$$\int_{\mathbb{H}^{N}} \frac{u^2}{r^2} \ dv_{\mathbb{H}^{N}} = \int_{\mathbb{R}^{+}} \int_{\mathbb{R}^{N-1}} \frac{v^2}{y^4 d^2} \ dx \ dy\,,\quad \int_{\mathbb{H}^{N}} \frac{u^2}{r^4} \ dv_{\mathbb{H}^{N}} = \int_{\mathbb{R}^{+}} \int_{\mathbb{R}^{N-1}} \frac{v^2}{y^4 d^4} \ dx \ dy
$$

Furthermore, by Lemma \ref{Laconformal}  $\alpha=(N-4)/2$
and by integration by parts, we obtain

\begin{align}
\int_{\mathbb{H}^{N}} (\Delta_{\mathbb{H}^{N}} u )^2 \ dv_{\mathbb{H}^{N}} & = \int_{\mathbb{R}^{+}} \int_{\mathbb{R}^{N-1}} \left( (\Delta v)^2 + 4 \frac{v_{y}^2}{y^2}
 + \frac{(N-4)^2 (N+2)^2}{16} \frac{v^2}{y^4} \right.  \notag \\
& \left. - 4 \frac{v_{y}}{y} \Delta v - \frac{(N-4)(N+2)}{2} \frac{v \Delta v}{y^2}
  + (N-4)(N+2) \frac{vv_{y}}{y^3} \right) \ dx \ dy \notag \\
 & = \int_{\mathbb{R}^{+}} \int_{\mathbb{R}^{N-1}} \left( (\Delta v)^2 + \frac{(N-4)^2(N+2)^2}{16} \frac{v^2}{y^4} + \frac{(N^2 - 2N - 4)}{2} \frac{|\nabla v|^2}{y^2} \right) \ dx \ dy. \notag
\end{align}

Taking into account the above relations in \eqref{PR} we obtain \eqref{HALFrellich2}. As concerns the optimality of the constants, it follows in the same way of \eqref{HALFrellich1}. The main difference is that here, to show the optimality of the constant in front of the term involving the gradient, \eqref{mazya1} has to be replaced by the inequality
$$\int_{\mathbb{R}^{+}} \int_{\mathbb{R}^{N-1}} \frac{|\nabla v|^2}{y^2} \ dx \ dy \geq \frac{9}{4} \int_{\mathbb{R}^{+}} \int_{\mathbb{R}^{N-1}}\frac{v^2}{y^4}\ dx \ dy\,$$
for $v\in C_c^{\infty}(\mathbb{R}_+^{N})$, the proof of which is readily obtained combining integration by part with H\"older inequality.
 \qed

\section{Proof of Proposition \ref{infinity}}\label{loginfinity}
The proof follows by exploiting several ideas from \cite[Theorem 6.1]{FT} and is divided in three steps. \\

{\bf{Step 1.}} Let us denote $\tilde \Phi_{k} (r) = \Phi(r) f_{k}(r),$ where $\Phi (r) = \left( \frac{r}{\sinh r} \right)^{\frac{N-1}{2}},$ then using Proposition 4.3 we have

\begin{align}\label{prop4.3}
-\Delta_{\hn} \tilde \Phi_{k}(r) - \frac{(N-1)^2}{4} \tilde \Phi_{k}(r) & = \frac{(N-1)(N-3)}{4} \frac{1}{\sinh^2 r} \tilde \Phi_{k}(r) - \frac{(N-1)(N-3)}{4} \frac{1}{r^2} \tilde \Phi_{k}(r) \notag \\
& - (f^{\prime \prime}_{k}(r) - \frac{(N-1)}{r} f_{k}(r)) \Phi.
\end{align}

Set $f_{0}(r) = r^{\frac{2-N}{2}}$ and, for $k = 1, 2, \ldots $,

\begin{equation}\label{fk}
f_{k}(r) = r^{\frac{(2-N)}{2}} X_{1}^{-\frac{1}{2}}(r) X_{2}^{-\frac{1}{2}}(r) \ldots  X_{k}^{-\frac{1}{2}}(r)\,,
\end{equation}
from \cite[Theorem 6.1]{FT} we know that
\[
-  (f^{\prime \prime}_{k}(r) - \frac{(N-1)}{r} f_{k}(r)) = \frac{1}{r^2} \left( \frac{(N-2)^2}{4} + \frac{1}{4}X_{1}^2 +  \frac{1}{4}X_{1}^2 X_{2}^2 + \ldots +
\frac{1}{4} X_{1}^2 \ldots X_{k}^2 \right).
\]
Now substituting \eqref{fk} in \eqref{prop4.3} we obtain,
\begin{align}\label{infiniteequation}
-\Delta_{\hn} \tilde \Phi_{k}(r) - \frac{(N-1)^2}{4} \tilde \Phi_{k}(r) & = \frac{(N-1)(N-3)}{4} \frac{1}{\sinh^2 r} \tilde \Phi_{k}(r) -  \frac{1}{4r^2} \tilde \Phi_{k}(r) \notag \\
& + \frac{1}{4} \sum_{i = 1}^{k} \frac{1}{r^2} X_{1}^2(r)X_{2}^2(r)\ldots X_{i}^2(r) \tilde \Phi_{k}(r)
\end{align}

{\bf{Step 2.}} We consider $u \in C_{c}^{\infty}(B)$ and settled $u(x) = \Psi(x) v(x)$ we compute

\[
\int_{B} |\nabla_{\hn} u|^2 \ dv_{\hn} = \int_{B} \Psi^2 |\nabla_{\hn} v|^2  \ dv_{\hn} + \int_{B} v^2 |\nabla_{\hn} \Psi|^2 \ dv_{\hn}
+ 2 \int_{B}  v \Psi \langle \nabla_{\hn} v\,, \nabla_{\hn} \Psi \rangle \ dv_{\hn},
\]
 after integration by parts the last term of above expression we obtain

 \begin{align}\label{incom}
 \int_{B} |\nabla_{\hn} u|^2 \ dv_{\hn} & = - \int_{B} \Psi (\Delta_{\hn} \Psi) v^2 \ dv_{\hn} + \int_{B} \Psi^2 |\nabla_{\hn} v|^2 \ dv_{\hn} \notag \\
 & = - \int_{B} \frac{\Delta \Psi}{\Psi} u^2 \ dv_{\hn} + \int_{B} \Psi^2 |\nabla_{\hn} v|^2 \ dv_{\hn} \notag \\
& \geq  - \int_{B} \frac{\Delta \Psi}{\Psi} u^2 \ dv_{\hn}.
\end{align}
By choosing $\Psi = \Phi f_{k} $, where $f_{k}$ defined as in \eqref{fk}, in \eqref{incom} and taking the limit $k \rightarrow \infty,$ we derive \eqref{infinityi}. \\

{\bf{Step 3.}}  Next we prove the optimality issue. Let us denote

\begin{align*}
I_{k}(u) & := \int_{B} |\nabla_{\hn} u|^2 \ dv_{\hn} - \frac{(N-1)^2}{4} \int_{B} u^2 - \frac{(N-1)(N-3)}{4} \int_{B} \frac{u^2}{\sinh^2 r} - \frac{1}{4} \int_{B} \frac{u^2}{r^2} \ dv_{\hn} \notag \\
& - \frac{1}{4} \sum_{i = 1}^{k} \int_{B} \frac{u^2}{r^2} X_{1}^2 X_{2}^2 \ldots X_{k}^2 \ dv_{\hn}.
\end{align*}
 Then clearly, for $k = 1, 2 \ldots $
 \[
 I_{k-1}(u) = I_{k}(u) +  \frac{1}{4} \int_{B} \frac{u^2}{r^2} X_{1}^2 X_{2}^2 \ldots X_{k}^2 \ dv_{\hn}.
 \]
 Then it easy to note that

 \begin{equation}\label{optimality1}
 \frac{I_{k-1}(u)}{\int_{B} \frac{u^2}{r^2} X_{1}^2 X_{2}^2 \ldots X_{k}^2 \ dv_{\hn}} = \frac{I_{k}(u)}{\int_{B} \frac{u^2}{r^2} X_{1}^2 X_{2}^2 \ldots X_{k}^2 \ dv_{\hn}} +
  \frac{1}{4}.
 \end{equation}
 By choosing $u = \tilde \Phi_{k} v $ (with $\tilde \Phi_{k}$ as in step 1) and following step 2, we obtain

 \[
 I_{k} (u) = \int_{B} \tilde \Phi_{k}^2 |\nabla_{\hn} v|^2 \ dv_{\hn},
 \]
 and hence
\[
 \frac{I_{k-1}(u)}{\int_{B} \frac{u^2}{r^2} X_{1}^2 X_{2}^2 \ldots X_{k}^2 \ dv_{\hn}}
 = \frac{ \int_{B} \tilde \Phi_{k}^2 |\nabla_{\hn} v|^2 \ dv_{\hn}  }{\int_{B} \frac{u^2}{r^2} X_{1}^2 X_{2}^2 \ldots X_{k}^2 \ dv_{\hn}} +
  \frac{1}{4}.
\]
Now we choose $v=U_{\epsilon, a}\,,$ where
\begin{equation}\label{optimality2}
U_{\epsilon, a}(r) = v_{\epsilon, a}(r) \psi(r) = r^{\epsilon} X_{1}^{a_{1}} X_{2}^{a_{2}} \ldots X_{k}^{a_{k}} \psi(r),
\end{equation}
the parameters $\epsilon, a_{i}$ will be positive and small and eventually will be sent to zero. The function $\psi(r)$ is  a smooth cut-off function such that
$\psi(r) = 1$ in $B_{\delta}$ and $\psi (r) = 0$ outside $B_{2 \delta}$ for some $\delta$ small. \\
Arguing exactly as in \cite[Theorem 6.1]{FT} one sees that as the parameters $\epsilon$ and $a_{i}$ go to zero, then
\[
\frac{ \int_{B} \tilde \Phi_{k}^2 |\nabla_{\hn} v|^2 \ dv_{\hn}  }{\int_{B} \frac{u^2}{r^2} X_{1}^2 X_{2}^2 \ldots X_{k}^2 \ dv_{\hn}} \rightarrow 0.
\]
This immediately gives
\[
 \inf_{C_{c}^{\infty}(B)} \frac{I_{k-1}(u)}{\int_{B} \frac{u^2}{r^2} X_{1}^2 X_{2}^2 \ldots X_{k}^2 \ dv_{\hn}}  \leq \frac{1}{4}
\]
and proves the optimality issue.

\qed

\par\bigskip\noindent
\textbf{Acknowledgments.} We are grateful to Y. Pinchover for explaining to us some of the results and methods of \cite{pinch, pinch2}. We are also grateful to P. Caldiroli for useful discussions. The first and second authors are partially supported by the Research Project FIR (Futuro in Ricerca) 2013 \emph{Geometrical and qualitative aspects of PDE's}. The third author is partially supported by the PRIN project {\em Equazioni alle derivate parziali di tipo ellittico e parabolico: aspetti geometrici, disuguaglianze collegate, e applicazioni}. The first and third authors are members of the Gruppo Nazionale per l'Analisi Matematica, la Probabilit\`a e le loro Applicazioni (GNAMPA) of the Istituto Nazionale di Alta Matematica (INdAM).



\begin{thebibliography}{99}

\bibitem{A} S. Agmon, \emph{Lectures on Exponential Decay of Solutions of Second-Order Elliptic Equations: Bounds on Eigenfunctions of N-body Schrödinger Operators}, Math. Notes, vol. 29, Princeton University Press, Princeton, 1982.

\bibitem{AK} K. Akutagawa, H. Kumura, \it Geometric relative Hardy inequalities and the discrete
spectrum of Schrödinger operators on manifolds\rm, Calc. Var. Part. Diff. Eq. 48 (2013), 67-88.

\bibitem{BD} V. Banica, T. Duyckaerts, \emph{Weighted Strichartz estimates for radial Schr\"{o}dinger equations on noncompact manifolds}, Dyn. Partial Differ. Eq. 4 (2007), no. 4, 335-359.

\bibitem{GFT} G. Barbatis, S. Filippas, A. Tertikas, \emph{A unified approach to improved $L^p$ Hardy inequalities with best constants},
Trans. Amer. Soc, 356 (2004), 2169-2196.

\bibitem{BFT2} G. Barbatis, S. Filippas, A. Tertikas, \emph{Series expansion for $L^p$ Hardy inequalities}, Indiana Univ. Math. J. 52 (2003), 171-–190.

\bibitem{BT}  G. Barbatis, A. Tertikas, \emph{On a class of Rellich inequalities}, J. Comput. Appl. Math. 194 (2006), no. 1, 156-172.



     \bibitem{BMR} B. Bianchini, L. Mari, M. Rigoli, \emph{Yamabe type equations with a sign-changing nonlinearity, and the prescribed curvature problem}, J. Funct. Anal. 268 (2015),  1-72.

\bibitem{Mitidieri1} Y. Bozhkov, E. Mitidieri, \emph{Conformal Killing vector fields and Rellich type identities on Riemannian manifolds}, I Lecture Notes of Seminario Interdisciplinare di Matematica 7 (2008), 65-80.

\bibitem{Mitidieri2} Y. Bozhkov, E. Mitidieri, \emph{Conformal Killing vector fields and Rellich type identities on Riemannian manifolds}, II. Mediterr. J. Math. 9 (2012), no. 1, 1-20.

\bibitem{BrezisM} H. Brezis, M. Marcus, \emph{Hardy's inequalities revisited}, Ann. Scuola Norm. Sup. Cl. Sci. (4) 25 (1997), 217-237.

\bibitem{Brezis} H. Brezis, J. L. Vazquez, \emph{Blow-up solutions of some nonlinear elliptic problems}, Rev. Mat. Univ. Complut. Madrid 10 (1997), 443-469.

 \bibitem{CM} P. Caldiroli, R. Musina, \emph{Rellich inequalities with weights}, Calc. Var. Part. Diff. Eq. 45 (2012), 147-164.

 \bibitem{Carron} G. Carron, \emph{Inegalites de Hardy sur les varietes Riemanniennes non-compactes}, J. Math. Pures Appl. (9) 76 (1997), 883-891.

\bibitem{Dambrosio} L. D'Ambrosio, S. Dipierro, \emph{Hardy inequalities on Riemannian manifolds and applications},  Ann. Inst. H. Poinc. Anal. Non Lin. 31 (2014),  449-475.

\bibitem{DA} E.B. Davies; \emph{Heat kernel and Spectral Theory}, Cambridge University Press, 1989.

\bibitem{DH} E.B. Davies, A.M. Hinz, \emph{Explicit constants for Rellich inequalities in $L^p(\Omega)$}, Math. Z. 227 (1998), 511-523.

\bibitem{pinch} B. Devyver, M. Fraas, Y. Pinchover, \emph{Optimal Hardy weight for second-order elliptic operator: an answer to a problem of Agmon}, J. Funct. Anal. 266 (2014), 4422-4489.

\bibitem{pinch2} B. Devyver, Y. Pinchover, G. Psaradakis, \emph{Optimal Hardy inequalities in cones}, to appear in Proc. Roy. Soc. Edinburgh Sect. A (2016), preprint arXiv 1502.05205.

\bibitem{FMT} S. Filippas, L. Moschini, A. Tertikas, \emph{Sharp trace Hardy-Sobolev-Maz'ya inequalities and the fractional Laplacian}, Arch. Ration. Mech. Anal. 208 (2013), no. 1, 109-161.

\bibitem{FT} S. Filippas, A.Tertikas, \emph{Optimizing improved Hardy inequalities}, J. Funct. Anal. 192 (2002), 186-–233.

\bibitem{FTT} S. Filippas, A. Tertikas, J. Tidblom, \emph{On the structure of Hardy-Sobolev-Maz'ya inequalities}, J. Eur. Math. Soc. (JEMS) 11 (2009), no. 6, 1165-1185.

\bibitem{gaz} F. Gazzola, H. Grunau, E. Mitidieri, \emph{Hardy inequalities with optimal constants and remainder terms}, Trans. Amer. Math. Soc. 356 (2004), 2149-2168.

\bibitem{GM} N. Ghoussoub, A. Moradifam, \emph{Bessel pairs and optimal Hardy and Hardy–Rellich inequalities}, Math.
Ann. 349 (2011), 1–57.

\bibitem{RGR} R. Greene, W. Wu; \emph{Function Theory of Manifolds which Possess a Pole}, Springer, 1979.

\bibitem{AG} A. Grigoryan; \emph{Analytic and geometric background of recurrence and non-explosion of the Brownian motion on Riemannian manifolds}, Bull. Amer. Math. Soc. 36 (1999), 135-249.

\bibitem{Kombe1} I. Kombe, M. Ozaydin, \emph{Improved Hardy and Rellich inequalities on Riemannian manifolds}, Trans. Amer. Math. Soc. 361 (2009), no. 12, 6191-6203.

\bibitem{Kombe2} I. Kombe, M. Ozaydin, \emph{Rellich and uncertainty principle inequalities on Riemannian manifolds}, Trans. Amer. Math. Soc. 365 (2013), no. 10, 5035-5050.


\bibitem{LW} P. Li, J. Wang, \emph{Weighted Poincar\'{e} inequality and rigidity of complete manifolds}, Ann. Sci. \'Ecole Norm. Sup. 39 (2006), 921-982.

\bibitem{Ma} V.G. Maz'ya,  \emph{Sobolev spaces}, Springer-Verlag, Berlin, 1985.

\bibitem{mancini} G. Mancini, K. Sandeep, \emph{On a semilinear equation in $\mathbb{H}^n$}. Ann. Scuola Norm. Sup. Pisa Cl. Sci. (2008), 5 vol. VII, 635-671.

\bibitem{MMP} M. Marcus, V. J. Mizel, Y. Pinchover, \emph{On the best constant for Hardy's inequality in ${\mathbb R}^n$}, Trans. Am. Math. Soc. 350 (1998), 3237–-3255.

\bibitem{MSS} G. Metafune, M. Sobajima, C. Spina, \emph{Weighted Calder\'on–Zygmund and Rellich inequalities
in $L^p$}, Math. Ann. 361 (2015), 313-366.


\bibitem{Mitidieri} E. Mitidieri, \emph{A simple approach to Hardy inequalities}, Mat. Zametki 67 (2000), 563-572.

\bibitem{PP} P. Petersen; \emph{Riemannian Geometry}, Graduate texts in Mathematics, 171, NY: Springer.xvi, (1998).

\bibitem{PT1} Y. Pinchover, K. Tintarev, \emph{Existence of minimizers for Schrödinger operators under domain perturbations with application to Hardy's inequality}, Indiana Univ. Math. J. 54 (2005), 1061–-1074.

\bibitem{PT2} Y. Pinchover, K. Tintarev, \emph{A ground state alternative for singular Schrödinger operators}, J. Funct. Anal. 230 (2006), 65–-77.

\bibitem{R} F. Rellich, \emph{Halbbeschr\"ankte Differentialoperatoren h\"oherer Ordnung}, Proceedings of the
International Congress of Mathematicians Amsterdam, Vol. III (1954), 243-250. North-Holland Publishing Co., Amsterdam, 1956.

\bibitem{ES} E.M. Stein, G. Weiss, \emph{Introduction to Fourier analysis on Euclidean spaces}, Princeton Mathematical Series Vol. 32, Princeton University Press, Princeton (1971).

\bibitem{TZ} A. Tertikas, N.B. Zographopoulos, \emph{Best constants in the Hardy-Rellich inequalities and related improvements}, Adv. Math. 209 (2007), 407-459.

\bibitem{Yang} Q. Yang, D. Su, Y. Kong, \emph{Hardy inequalities on Riemannian manifolds with negative curvature}, Commun. Contemp. Math. 16 (2014), no. 2, 1350043.

\bibitem{vaz} J.L. Vazquez, \emph{Fundamental solution and long time behaviour of the Porous Medium Equation in Hyperbolic Space},  J. Math. Pures Appl. 104 (2015), 454-484.

\bibitem{VZ}  J.L.Vazquez, E. Zuazua, \emph{The Hardy inequality and the asymptotic behaviour of the heat equation with an inverse-square potential}, J. Funct. Anal. 173 (2000), no. 1, 103-153.

 \end{thebibliography}
\end{document}